\documentclass[11pt]{article}
\usepackage{latexsym,amsfonts,amssymb,amsmath,amsthm}
\usepackage{graphicx}

\usepackage[usenames,dvipsnames]{color}
\usepackage{ulem}

\parindent 0.5cm
\evensidemargin 0cm \oddsidemargin 0cm \topmargin 0cm \textheight
22cm \textwidth 16cm \footskip 2cm \headsep 0cm

\begin{document}
\setlength{\baselineskip}{16pt}

\parindent 0.5cm
\evensidemargin 0cm \oddsidemargin 0cm \topmargin 0cm \textheight
22cm \textwidth 16cm \footskip 2cm \headsep 0cm

\newtheorem{theorem}{Theorem}[section]
\newtheorem{lemma}{Lemma}[section]
\newtheorem{proposition}{Proposition}[section]
\newtheorem{definition}{Definition}[section]
\newtheorem{example}{Example}[section]
\newtheorem{corollary}{Corollary}[section]

\newtheorem{remark}{Remark}[section]

\numberwithin{equation}{section}

\def\p{\partial}
\def\I{\textit}
\def\R{\mathbb R}
\def\C{\mathbb C}
\def\u{\underline}
\def\l{\lambda}
\def\a{\alpha}
\def\O{\Omega}
\def\e{\epsilon}
\def\ls{\lambda^*}
\def\D{\displaystyle}
\def\wyx{ \frac{w(y,t)}{w(x,t)}}
\def\imp{\Rightarrow}
\def\tE{\tilde E}
\def\tX{\tilde X}
\def\tH{\tilde H}
\def\tu{\tilde u}
\def\d{\mathcal D}
\def\aa{\mathcal A}
\def\DH{\mathcal D(\tH)}
\def\bE{\bar E}
\def\bH{\bar H}
\def\M{\mathcal M}
\renewcommand{\labelenumi}{(\arabic{enumi})}

\def\disp{\displaystyle}
\def\undertex#1{$\underline{\hbox{#1}}$}
\def\card{\mathop{\hbox{card}}}
\def\sgn{\mathop{\hbox{sgn}}}
\def\exp{\mathop{\hbox{exp}}}
\def\OFP{(\Omega,{\cal F},\PP)}
\newcommand\JM{Mierczy\'nski}
\newcommand\RR{\ensuremath{\mathbb{R}}}
\newcommand\CC{\ensuremath{\mathbb{C}}}
\newcommand\QQ{\ensuremath{\mathbb{Q}}}
\newcommand\ZZ{\ensuremath{\mathbb{Z}}}
\newcommand\NN{\ensuremath{\mathbb{N}}}
\newcommand\PP{\ensuremath{\mathbb{P}}}
\newcommand\abs[1]{\ensuremath{\lvert#1\rvert}}

\newcommand\normf[1]{\ensuremath{\lVert#1\rVert_{f}}}
\newcommand\normfRb[1]{\ensuremath{\lVert#1\rVert_{f,R_b}}}
\newcommand\normfRbone[1]{\ensuremath{\lVert#1\rVert_{f, R_{b_1}}}}
\newcommand\normfRbtwo[1]{\ensuremath{\lVert#1\rVert_{f,R_{b_2}}}}
\newcommand\normtwo[1]{\ensuremath{\lVert#1\rVert_{2}}}
\newcommand\norminfty[1]{\ensuremath{\lVert#1\rVert_{\infty}}}

\title{Diffusive KPP Equations with Free Boundaries in Time Almost Periodic Environments: I. Spreading and Vanishing Dichotomy}

\author{Fang Li\\
School of Mathematical Sciences\\
University of Science and Technology of China\\
Hefei, Anhui, 230026, P.R. China\\
and\\
Department of Mathematics and Statistics\\
Auburn University\\
Auburn University, AL 36849 \\
 \\
 Xing Liang\\
 School of Mathematical Sciences\\
  University of Science and Technology of China\\
  Hefei, Anhui, 230026, P.R. China\\
   \\
  and
  \\
   Wenxian Shen\\
   Department of Mathematics and Statistics\\
Auburn University\\
Auburn University, AL 36849\\
 }

\date{}
\maketitle

\noindent \textbf{Abstract.} In this series of papers, we investigate  the spreading and
vanishing dynamics of time almost periodic diffusive KPP equations with  free boundaries.
Such equations are used to characterize the spreading of a new
 species in time almost periodic environments with free boundaries representing the spreading fronts.
In this first part, we show that a spreading-vanishing
dichotomy occurs for such free boundary problems, that is,  the species either successfully spreads to all the new
environment and stabilizes at a time almost periodic positive solution, or it fails to establish and dies out eventually.
The results of this part extend the existing results on spreading-vanishing dichotomy for time and space independent, or time periodic and space independent,
or  time independent and space periodic
diffusive KPP equations with free boundaries. The extension is nontrivial and is ever done for the first time.

\medskip

\noindent \textbf{Keywords.} Diffusive KPP equation, free boundary, time almost periodic environment, spreading-vanishing
dichotomy, principal Lyapunov exponent, part metric, time almost periodic positive solution.

\medskip

\noindent \textbf{2010 Mathematics Subject Classification.} 35K20, 35K57, 35B15, 37L30, 92B05.

%\tableofcontents

\section{Introduction}

This is the first part of a series of papers on  the spreading and
vanishing dynamics of  diffusive equations with free boundaries of the form,
\begin{equation}
\label{main-eq}
\begin{cases}
u_t=u_{xx}+uf(t,x,u),\quad &t>0,\,\,  0<x<h(t)\cr h^{'}(t)=-\mu u_x(t,h(t)),\quad &t>0,\cr
u_x(t,0)=u(t,h(t))=0, \quad &t>0,\cr h(0)=h_0, u(0,x)=u_0(x),\quad &0\leq x\leq h_0,
\end{cases}
\end{equation}
where $x=h(t;u_0,h_0)$ is the moving boundary to be determined, $\mu$, $h_0$ are given
positive constants, and the initial function $u_0(x)$ satisfies
 \begin{equation}
 \label{initial-value}
 u_0\in C^2([0,h_0]),  \  u^{'}_0(0)=u_0(h_0)=0, \ {\rm and} \ u_0>0 \ {\rm in} \ [0,h_0).
 \end{equation}

We assume that
 $f(t,x,u)$ is a $C^1$ function in $t\in\RR$, $x\in\RR$, and
$u\in\RR$; $f(t,x,u)<0$ for $u\gg 1$; $f_u(t,x,u)<0$ for $u\ge 0$;
and $f(t,x,u)$ is almost periodic in $t$ uniformly with respect to
$x\in\RR$ and $u\in\RR$ (see (H1), (H2) in subsection 2.1 for detail). Here is a typical example
of such functions,
$f(t,x,u)=a(t,x)-b(t,x)u$, where $a(t,x)$ and $b(t,x)$ are almost periodic in $t$ and periodic in $x\in\RR$, and
$\inf_{t\in\RR,x\in\RR}b(t,x)>0$.

 Observe  that for any given $u_0$ satisfying \eqref{initial-value},
 \eqref{main-eq} has a  unique (local) solution  $(u(t,x;u_0,h_0)$, $h(t;u_0,h_0))$ with
  $u(0,x;u_0,h_0)=u_0(x)$ and $h(0;u_0,h_0)=h_0$ (see \cite{DuGuPe}). Moreover,
by comparison principle for parabolic equations, $(u(t,x;u_0,h_0),h(t;u_0,h_0))$ exists for all
$t>0$ and $u_x(t,h(t))<0$. Hence $h(t;u_0,h_0)$ increases as $t$ increases.

Equation \eqref{main-eq}  with $f(t,x,u)=u(a-bu)$ and $a$ and $b$ being
two positive constants was introduced by Du and Lin in \cite{DuLi}
to understand the spreading of species. A great deal of previous mathematical investigation on the spreading of species (in one space dimension case)
has been based on diffusive equations of the form
\begin{equation}
\label{kpp-general-eq}
u_t=u_{xx}+u f(t,x,u),\quad x\in\RR,
\end{equation}
where $f(t,x,u)<0$ for $u\gg 1$ and $f_u(t,x,u)<0$ for $u\ge 0$. Thanks to the pioneering works of Fisher (\cite{Fis}) and Kolmogorov, Petrowsky, Piscunov
(\cite{KPP}) on the following special case of \eqref{kpp-general-eq}
\begin{equation}
\label{kpp-special-eq}
u_t=u_{xx}+u(1-u),\quad x\in\RR,
\end{equation}
\eqref{main-eq}, resp. \eqref{kpp-general-eq}, is referred to as diffusive Fisher or KPP equation.

One of the central problems for both \eqref{main-eq} and \eqref{kpp-general-eq} is
to understand their spreading dynamics. For \eqref{kpp-general-eq}, this is closely related to
spreading speeds and transition fronts of \eqref{kpp-general-eq} and has been widely studied.
When $f(t,x,u)$ is independent of $t$ and $x$ or is periodic in $t$ and $x$, the spreading dynamics
for \eqref{kpp-general-eq} is quite well understood. For example, assume  that $f(t,x,u)$ is periodic in
$t$ with period $T$ and periodic in $x$  with period $p$, and that
$u\equiv 0$ is a linearly unstable solution of \eqref{kpp-general-eq} with respect to periodic perturbations. Then it is known that
\eqref{kpp-general-eq} has a unique positive periodic solution $u^*(t,x)$ ($u^*(t+T,x)=u^*(t,x+p)=u^*(t,x)$)  which is asymptotically stable with
respect to periodic perturbations   and
 it
has been proved
 that   there is a positive constant $c^*$ such that for every $c\geq c^*$, there is a periodic
 traveling wave solution $u(t,x)$ connecting $u^*$ and $u\equiv 0$
 with speed $c$ (i.e. $u(t,x)=\phi(x-ct,t,x)$ for some $\phi(\cdot,\cdot,\cdot)$ satisfying that
 $\phi(\cdot,\cdot+T,\cdot)=\phi(\cdot,\cdot,\cdot+p)=\phi(\cdot,\cdot,\cdot)$ and $\phi(-\infty,\cdot,\cdot)=u^*(\cdot,\cdot)$ and
 $\phi(\infty,\cdot,\cdot)=0$), and there is no such traveling
 wave solution of slower speed (see \cite{LiZh, Nad, NoXi, Wei}).  Moreover, the minimal wave speed $c^*$ is
 of the following spreading  property and is hence called the spreading speed of \eqref{kpp-general-eq}:
 for any given $u_0\in C_{\rm unif}^b(\RR,\RR^+)$ with non-empty support,
 \begin{equation}
 \label{kpp-spreading-eq}
 \begin{cases}
 \lim_{|x| \le c^{'}t,t\to\infty}[u(t,x;u_0)-u^*(t,x)]=0\quad \forall\,\, c^{'}<c^*\cr
 \lim_{|x| \ge c^{''}t,t\to\infty} u(t,x;u_0)=0\quad \forall\,\, c^{''}> c^*,
 \end{cases}
 \end{equation}
 where $u(t,x;u_0)$ is the solution of \eqref{kpp-general-eq} with $u(0,x;u_0)=u_0(x)$
 (see \cite{LiZh, Wei}).

The spreading property \eqref{kpp-spreading-eq} for \eqref{kpp-general-eq} in the case that $f(t,x,u)$ is periodic in $t$ and $x$
 implies that spreading always happens for a solution of
\eqref{kpp-general-eq} with a positive initial function, no matter how small the positive initial function is.
The following strikingly different  spreading scenario has been proved for  \eqref{main-eq} in the case that
$f(t,x,u)\equiv f(u)$ (see \cite{DuGu, DuLi}):  it exhibits a spreading-vanishing dichotomy in the sense that for any given positive initial data
$u_0$ satisfying \eqref{initial-value} and $h_0$, either vanishing occurs (i.e. $\lim_{t\to\infty}h(t;u_0,h_0)<\infty$ and $\lim_{t\to\infty}u(t,x;u_0,h_0)=0$) or
spreading occurs (i.e. $\lim_{t\to\infty}h(t;u_0,h_0)=\infty$ and $\lim_{t\to\infty}u(t,x;u_0,h_0)=u^*$ locally uniformly in $x\in \RR^+$, where
$u^*$ is the unique positive solution of $f(u)=0$). The above spreading-vanishing dichotomy for \eqref{main-eq} with
$f(t,x,u)\equiv f(u)$ has also been extended to the cases that $f(t,x,u)$ is periodic in $t$ or that $f(t,x,u)$ is independent of $t$ and periodic
in $x$ (see \cite{DuGuPe,DuLia}). The spreading-vanishing dichotomy proved for \eqref{main-eq} in \cite{DuGu, DuGuPe, DuLi, DuLia}
 is well supported by some empirical evidences, for example, the introduction of several
bird species from Europe to North America in the 1900s was successful only after many initial
attempts (see \cite{LoHoMa, ShKa}).

In reality, many evolution systems in biology are subject to non-periodic time and/or space variations. It is therefore
of great importance to investigate the spreading dynamics for both \eqref{main-eq} and \eqref{kpp-general-eq} with general time and space
dependent $f(t,x,u)$.  The spreading dynamics for \eqref{kpp-general-eq} with non-periodic time and/or space dependence has been
studied by many people recently
(see \cite{BeHa, BeHaNa, BeNa, HuSh, KoSh, NaRo, NoXi1, She1, She2, She3, TaZhZl, Zla}, etc.). However, there is little study
on the spreading dynamics for \eqref{main-eq} with non-periodic time and space dependence.

The objective of the current series
of papers is to investigate the spreading-vanishing dynamics of \eqref{main-eq} in the case that $f(t,x,u)$ is almost periodic in $t$,
that is, to investigate  whether the population will
successfully establishes itself
in the entire space (i.e. spreading occurs), or  it fails to establish and vanishes
eventually (i.e. vanishing occurs).
Roughly speaking, for given $(u_0,h_0)$, if $h_\infty=\lim_{t\to\infty}h(t;u_0,h_0)=\infty$ and for any $M>0$,
$\liminf_{t\to\infty}\inf_{0\le x\le M}u(t,x;u_0,h_0)>0$,
we say {\it spreading} occurs. If $h_\infty<\infty$ and $\lim_{t\to\infty}u(t,x;u_0,h_0)=0$, we say {\it vanishing} occurs (see Definition
\ref{spreading-vanishing-def} for detail). We say a positive number $c^*$ is a {\it spreading speed} of \eqref{main-eq} if for any $(u_0,h_0)$ such that the spreading occurs,
$$\lim_{t\to\infty}\frac{h(t;u_0,h_0)}{t}=c^*$$
 and
 $$
 \liminf_{0\le x\le c^{'}t,t\to\infty}u(t,x;u_0,h_0)>0\quad \forall \,\, c^{'}<c^*
 $$
 (see Definition
\ref{spreading-vanishing-def} for detail).

In this first part of the series of the papers, we focus on the study of spreading and vanishing dichotomy
scenario for \eqref{main-eq}.  Among others,
we  prove the following spreading and vanishing dichotomy:

\medskip

\noindent $\bullet$  {\it Assume (H1)-(H5) stated in subsection 2.1. For any given $u_0$ satisfying
\eqref{initial-value}, either spreading occurs or vanishing occurs.
Moreover, there is $l^*>0$ such that for any given $u_0$ satisfying \eqref{initial-value}, vanishing occurs if and only if
$h_\infty\le l^*$} (see Theorem \ref{main-thm2}).

\medskip

To characterize the detailed  spreading and vanishing dynamics of \eqref{main-eq},
we also consider the following  fixed boundary problem on half line,
\begin{equation}
\label{aux-main-eq1}
\begin{cases}
u_t=u_{xx}+uf(t,x,u),\quad x\in (0,\infty)\cr
u_x(t,0)=0.
\end{cases}
\end{equation}

Observe that if $u^*(t,x)$ is a solution of \eqref{aux-main-eq1} and  $u_0(x)\le u^*(0,x)$ for $x\in [0,h_0]$, then
$u(t,x;u_0,h_0)\le u^*(t,x)$ for $0\le x\le h(t;u_0,h_0)$.  Among others, we prove that

\medskip

\noindent $\bullet$ {\it Assume (H1)-(H5) stated in subsection 2.1.
 \eqref{aux-main-eq1} has a unique time almost periodic positive solution $u^*(t,x)$}  (see Theorem \ref{main-thm1}) {\it and
for any given $u_0$ satisfying \eqref{initial-value}, if spreading occurs in \eqref{main-eq}, then $u(t,x;u_0,h_0)-u^*(t,x)\to 0$ as $t\to\infty$ locally uniformly in $x\ge 0$} (see
Theorem \ref{main-thm2}).

\medskip

We note that the techniques for \eqref{main-eq}  can be modified to study the
following double fronts free boundary problem:
\begin{equation}
\begin{cases}
\label{main-doub-eq}
u_t=u_{xx}+uf(t,x,u), \quad  &t>0,\,\,  g(t)<x<h(t), \cr
u(t,g(t))=0, g^{'}(t)=-\mu u_x(t,g(t)),  \quad &t>0,\cr
u(t,h(t))=0, h^{'}(t)=-\mu u_x(t,h(t)), \quad &t>0,\cr
u(0,x)=u_0(x),\quad & h_0\le x\le g_0 \cr
h(0)=h_0,\,\, g(0)=g_0
\end{cases}
\end{equation}
where both $x=g(t)$ and $x=h(t)$ are to be determined and $u_0$ satisfies
\begin{equation}
\label{initial-value-1}
\begin{cases}
u_0\in C^2([g_0,h_0]) \cr
u_0(g_0)=u_0(h_0)=0 \ \ and  \ \ u_0>0 \ \ in \ \ (g_0,h_0).
\end{cases}
\end{equation}
Under the assumptions (H1), (H2), $(H4)^*$, and (H5) (see section 6 for $(H4)^*$),  spreading-vanishing dichotomy for \eqref{main-doub-eq} also holds.
In particular, we prove that

\medskip
\noindent $\bullet$ {\it Assume (H1), (H2), $(H4)^*$, and (H5). For given $h_0>0$ and  $u_0$ satisfying \eqref{initial-value-1},
 either
 $h_\infty-g_\infty<\infty$ and
$\lim_{t\to+\infty} u(t,x;u_0,h_0,g_0)=0$ uniformly in $x$,
or   $h_{\infty}=-g_{\infty}=\infty$ and
$\liminf_{t\to\infty}\inf_{|x|\le M}u(t,x;u_0,h_0,g_0)>0$
for any $M>0$}
(see Proposition \ref{spreading-vanishing-doub-prop}).

\medskip

In the second part of the series of the papers, we will study the existence of  spreading speeds
for \eqref{main-eq} and the existence of time almost periodic semi-wave solutions of the
following free boundary problem associated to \eqref{main-eq},
\begin{equation}
\label{aux-main-eq3}
\begin{cases}
u_t=u_{xx}+uf(t,x,u),\quad &t>0, \,\, -\infty<x<h(t)\cr
u(t,h(t))=0,\quad &t>0,\cr
h^{'}(t)=-\mu u_x(t,h(t)), \quad &t>0.
\end{cases}
\end{equation}
 If $(u(t,x),h(t))$ is an entire solution of \eqref{aux-main-eq3},
it is called a {\it semi-wave solution} of \eqref{aux-main-eq3}.

 The rest of this paper is organized as follows.  In Section 2, we introduce the definitions and standing assumptions and
 state the main results of the paper. We present preliminary materials in Section 3 for the use in later sections.
 Section 4 is devoted to the investigation of time almost periodic KPP equation \eqref{aux-main-eq1}
  on the half line and to the proof of Theorem \ref{main-thm1}. In Section 5,
 we explore the spreading and vanishing dichotomy scenario of \eqref{main-eq} and prove
 Theorem \ref{main-thm2}.  The paper is ended with some remarks on spreading-vanishing dichotomy for \eqref{main-doub-eq} in Section 6.

\section{Definitions, Assumptions, and Main Results}

In this section, we introduce the  definitions and standing assumptions, and state the main results.

\subsection{Definitions and assumptions}

In this subsection, we introduce the definitions and standing assumptions. We first recall the definition of almost periodic
functions, next recall the definition of principal Lyapunov exponents for some linear parabolic equations, then state
the standing assumptions, and finally introduce the definition of spreading and vanishing for \eqref{main-eq}.

\begin{definition}[Almost periodic function]
\label{almost-periodic-def}
\begin{itemize}

\item[(1)] A continuous function $g:\RR\to \RR$ is called {\rm almost periodic} if
for any $\epsilon>0$, the set
$$
T(\epsilon)=\{\tau\in\RR\,|\, |f(t+\tau)-f(t)|<\epsilon\,\, \, \text{for all}\,\, t\in\RR\}
$$
is relatively dense in $\RR$.

\item[(2)] Let $g(t,x,u)$ be a continuous function of $(t,x,u)\in\RR\times\RR^m\times\RR^n$. $g$ is said to be {\rm almost periodic in $t$ uniformly with respect to $x\in\RR^m$ and
$u$ in bounded sets} if
$g$ is uniformly continuous in $t\in\RR$,  $x\in\RR^m$, and $u$ in bounded sets and for each $x\in\RR^m$ and $u\in\RR^n$, $g(t,x,u)$ is almost periodic in $t$.

\item[(3)] For a given almost periodic function $g(t,x,u)$, the hull $H(g)$ of $g$ is defined by
\begin{align*}
H(g)=\{\tilde g(\cdot,\cdot,\cdot)\,|\, & \exists t_n\to\infty \,\,\text{such that}\,\, g(t+t_n,x,u)\to \tilde g(t,x,u)\,\, \text{uniformly in}\,\, t\in\RR,\\
&
\,\, (x,u)\,\, \text{in bounded sets}\}.
\end{align*}
\end{itemize}
\end{definition}

\begin{remark}
\label{almost-periodic-rk}
(1) Let $g(t,x,u)$ be a continuous function of $(t,x,u)\in\RR\times\RR^m\times\RR^n$. $g$ is almost periodic in $t$ uniformly with respect to
$x\in\RR^m$ and $u$ in bounded sets  if and only if
 $g$ is uniformly continuous in $t\in\RR$,  $x\in\RR^m$, and $u$ in bounded sets and for any sequences $\{\alpha_n^{'}\}$,
$\{\beta_n^{'}\}\subset \RR$, there are subsequences $\{\alpha_n\}\subset\{\alpha_n^{'}\}$, $\{\beta_n\}\subset\{\beta_n^{'}\}$
such that
$$
\lim_{n\to\infty}\lim_{m\to\infty}g(t+\alpha_n+\beta_m,x,u)=\lim_{n\to\infty}g(t+\alpha_n+\beta_n,x,u)
$$
for each $(t,x,u)\in\RR\times\RR^m\times\RR^n$ (see \cite[Theorems 1.17 and 2.10]{Fink}).

(2) We may write $g(\cdot+t,\cdot,\cdot)$ as $g\cdot t(\cdot,\cdot,\cdot)$.
\end{remark}

For a given positive constant $l>0$ and a given $C^1$ function $a(t,x)$ which is almost periodic in $t$ uniformly in $x$ in bounded sets, consider
\begin{equation}
\label{linearized-eq1}
\begin{cases}
v_t=v_{xx}+a(t,x)v,\quad 0<x<l\cr v_x(t,0)=v(t,l)=0.
\end{cases}
\end{equation}

Let
$$
Y(l)=\{u\in C([0,l])\,|\, u(l)=0\}
$$
with the norm $\|u\|=\max_{x\in [0,l]}|u(x)|$ for $u\in Y(l)$.
Let $A=\Delta$ acting on $Y(l)$ with $\mathcal{D}(A)=\{u\in C^2([0,l])\cap Y(l)\,|\, u_x(0)=0\}$.
Note that  $A$ is a sectorial operator. Let $0<\alpha<1$ be such that $\mathcal{D}(A^\alpha)\subset C^1([0,l])$. Fix such an $\alpha$.
Let
\begin{equation}
\label{bounded-domain-space-eq1}
X(l)=\mathcal{D}(A^\alpha).
\end{equation}
Then $X(l)$ is strongly ordered Banach spaces with positive cone
$$
X^+(l)=\{u\in X(l)\,|\, u(x)\ge 0\}.
$$

Let
$$
X^{++}(l)={\rm Int}(X^+(l)).
$$
If no confusion occurs, we may write $X(l)$ as $X$.

By semigroup theory (see \cite{PA}), for any  $v_0\in X(l)$, \eqref{linearized-eq1}
has a unique solution $v(t,\cdot;v_0,a)$ with $v(0,\cdot;v_0,a)=v_0(\cdot)$.

For a given positive constant $l>0$ and a given $C^1$ function
 $a(t,x)$ which is almost periodic function in $t$ uniformly in $x$ in bounded sets, consider
also
\begin{equation}
\label{aaux-linearized-eq1}
\begin{cases}
v_t=v_{xx}+a(t,x)v,\quad 0<x<l\cr
v(t,0)=v(t,l)=0.
\end{cases}
\end{equation}
Let
$$
\tilde Y(l)=\{u\in C([0,l])\,|\, u(0)=u(l)=0\}.
$$
Let $A=\Delta$ acting on $\tilde Y(l)$ with $\mathcal{D}(A)=\{u\in C^2([0,l])\cap \tilde Y(l)\}$.
Note that  $A$ is a sectorial operator. Let $0<\alpha<1$ be such that $\mathcal{D}(A^\alpha)\subset C^1([0,l])$. Fix such an $\alpha$.
Let
\begin{equation}
\label{bounded-domain-space-eq2}
\tilde X(l)=\mathcal{D}(A^\alpha).
\end{equation}
Then, for any $v_0\in \tilde X(l)$, \eqref{aaux-linearized-eq1} has a unique solution
$\tilde v(t,\cdot;v_0,a)$ with $\tilde v(0,\cdot;v_0,a)=v_0(\cdot)$.

\begin{definition}[Principal Lyapunov exponent]
\label{principal-lyapunov-exp}
\begin{itemize}
\item[(1)] Let $V(t,a)v_0=v(t,\cdot;v_0,a)$ for $v_0\in X(l)$ and
$$
\lambda(a,l)=\limsup_{t\to\infty}\frac{\ln \|V(t,a)\|_{X(l)}}{t}.
$$
$\lambda(a,l)$ is called the {\rm principal Lyapunov exponent} of \eqref{linearized-eq1}.

\item[(2)] Let
$$
\tilde\lambda(a,l)=\limsup_{t\to\infty}\frac{\ln\|\tilde V(t,a)\|_{\tilde{X(l)}}}{t}
$$
where $\tilde V(t,a)v_0=\tilde v(t,\cdot;v_0,a)$ for $v_0\in\tilde X(l)$.
$\tilde\lambda(a,l)$ is called the {\rm principal Lyapunov exponent} of \eqref{aaux-linearized-eq1}.
\end{itemize}
\end{definition}

Let (H1)-(H5) be the following standing assumptions.

\medskip

\noindent {\bf (H1)} {\it $f(t,x,u)$ is $C^1$ in $(t,x,u)\in\RR^3$, $Df=(f_t,f_x,f_u)$ is bounded in $(t,x)\in\RR\times\RR$ and
in $u$ in bounded sets, and $f$ is monostable in $u$ in the sense that
there are $M>0$ such that $$\sup_{t\in\RR,x\in\RR,u\ge M}f(t,x,u)<0$$ and
$$\sup_{t\in\RR,x\in\RR,u\ge 0}f_u(t,x,u)<0.$$}

\medskip

\noindent{\bf (H2)} {\it $f(t,x,u)$ and $Df(t,x,u)=(f_t(t,x,u),f_x(t,x,u),f_u(t,x,u))$ are  almost periodic in $t$ uniformly with respect to $x\in\RR$ and
$u$ in bounded sets.
}

\medskip
\noindent {\bf (H3)} {\it There is $l^*>0$ such that
$\lambda(a(\cdot,\cdot),l)>0$ for $l>l^*$, where $a(t,x)=f(t,x,0)$.
}

\medskip

\noindent{\bf (H4)} {\it There are $y^*\ge 0$ and $L^*\ge 0$ such that
 $\tilde\lambda(a(\cdot,\cdot+y),l)>0$ for $y\ge y^*$ and $l\ge L^*$.}

 \medskip

\noindent {\bf (H5)}
 {\it For any given sequence $\{y_n^{'}\}\subset \RR$ and $\{g_{n}^{'}\}\subset H(f)$, there are  subsequences
$\{y_n\}\subset\{y_n^{'}\}$ and $\{g_n\}\subset\{g_n^{'}\}$ such that
$\lim_{n\to\infty} g_n(t,x+y_n,u)$ exists uniformly in $t\in\RR$ and $(x,u)$ in bounded sets.
}

\medskip

Assume (H1) and (H2).
We remark that, if $f(t,x,u)\equiv f(t,u)$, then  (H3) (resp. (H4))  holds if and only if
$\lim_{t\to\infty}\frac{1}{t}\int_0^t f(s,0)ds>0$ (see Lemma \ref{lyapunov-exponent-lm3} for the reasoning).
If $f(t,x,u)\equiv f(t,u)$ or $f(t,x,u)$ is periodic in $x$, (H5) is automatically true.

Consider \eqref{main-eq}. Throughout this paper, we assume (H1) and (H2).
For any given $u_0$ satisfying \eqref{initial-value},  \eqref{main-eq} has a unique solution
$(u(t,x;u_0,h_0),h(t;u_0,h_0))$ with $u(0,x;u_0,h_0)=u_0(x)$
and $h(0;u_0,h_0)=h_0$ (see \cite{DuGuPe}). By comparison principle for parabolic equations, $u(t,x;u_0,h_0)$ exists for all $t>0$ and
$u_x(t,x;u_0,h_0)\le 0$ for $t>0$.
Hence $h(t;u_0,h_0)$ is monotonically increasing, and therefore
there exists $h_{\infty}\in(0,+\infty]$ such that
$\lim_{t\to+\infty}h(t;u_0,h_0)=h_{\infty}$.

\begin{definition}[Spreading-vanishing and spreading speed]
\label{spreading-vanishing-def}
Consider \eqref{main-eq}.
\begin{itemize}
\item[(1)] For any given $u_0$ satisfying \eqref{initial-value}, let $h_\infty=\lim_{t\to\infty}h(t;u_0,h_0)$.
 It is said that the {\rm vanishing occurs} if $ h_\infty<\infty$ and
$\lim_{t\to\infty}\|u(t,\cdot;u_0,h_0)\|_{C([0,h(t)])}=0$. It is said that the {\rm spreading occurs}
 if $h_\infty=\infty$ and
 $\liminf_{t\to\infty} u(t,x;u_0,h_0)>0$ locally uniformly in
$x\in [0,\infty)$.

\item[(2)] A real number $c^*>0$ is called the {\rm spreading speed} of \eqref{main-eq} if for any $(u_0,h_0)$ such that
\eqref{initial-value} is  satisfied and the spreading occurs,
there holds
$$
\lim_{t\to\infty}\frac{h(t;u_0,h_0)}{t}=c^*
$$
and
$$
\liminf_{0\le x\le c^{'}t,t\to\infty}u(t,x;u_0,h_0)>0,\quad \forall\,\, c^{'}<c^*.
$$
\end{itemize}
\end{definition}

Biologically, spreading means that the free boundary  $x=h(t;u_0,h_0)$ goes to infinity
as $t\to\infty$ (i.e., $h_{\infty}=\infty)$, and population $u(t,x;u_0,h_0)$
successfully establishes itself
in the entire space. On the other
hand, vanishing means that the free boundary fails to move eventually, and
the population fails to establish and vanishes
eventually.

\subsection{Main results}

In this subsection, we state the main results of this paper. The first theorem is about the existence of time almost
periodic positive solution of \eqref{aux-main-eq1}.

\begin{theorem}[Almost periodic solutions]
\label{main-thm1}
 Consider \eqref{aux-main-eq1} and assume (H1)-(H5). Then there is a unique time almost periodic positive solution
$u^*(t,x)$ of \eqref{aux-main-eq1} and for any $u_0\in C_{\rm unif}^b([0,\infty),\RR^+)$ with $\inf_{x\in [0,\infty)}u_0(x)>0$,
$$
\lim_{t\to\infty}\|u(t,\cdot;u_0)-u^*(t,\cdot)\|_{C([0,\infty))}=0,
$$
where $u(t,x;u_0)$ is the solution of \eqref{aux-main-eq1} with $u(0,x;u_0)=u_0(x)$. If, in addition, $f(t,x,u)\equiv f(t,u)$,
then $u^*(t,x)\equiv V^*(t)$, where $V^*(t)$ is the unique time almost periodic positive solution of the following
ODE,
\begin{equation}
\label{ode-eq1}
\dot u=uf(t,u).
\end{equation}
\end{theorem}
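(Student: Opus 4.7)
The plan is to construct $u^*$ as a pullback limit of Cauchy problems on the half line started at a large constant in the far past, to trap it above zero using sub-solutions produced by the bounded-interval principal Lyapunov theory of (H3), and to derive uniqueness, the attraction statement, and almost periodicity from the strict sub-homogeneity of $u\mapsto uf(t,x,u)$ that $f_u<0$ provides, together with the precompactness property (H5).

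First I would fix $M>0$ so large that $\sup_{(t,x)\in\RR^2}f(t,x,M)<0$, available from (H1); then $\bar u\equiv M$ is a constant super-solution of \eqref{aux-main-eq1}. For each $l>l^*$, (H3) gives $\lambda(a,l)>0$ for $a(t,x)=f(t,x,0)$, and standard KPP monostable arguments on the bounded interval (monotone iteration between a small positive subsolution built from the unstable principal direction and the super-solution $M$) yield a time almost periodic positive solution $\phi_l(t,x)$ of the Neumann--Dirichlet problem on $(0,l)$; its extension $\tilde\phi_l$ by zero on $[l,\infty)$ is a generalized sub-solution of \eqref{aux-main-eq1} because $\partial_x\phi_l(t,l^-)\le 0$ by Hopf. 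Let $u(t,x;M,s)$ denote the solution on $[s,\infty)$ of \eqref{aux-main-eq1} with value $M$ at $t=s$; comparison gives $s\mapsto u(t,x;M,s)$ nonincreasing as $s\downarrow-\infty$ and $u(t,x;M,s)\ge\tilde\phi_l(t,x)$ for all $s\le t$, so
\[
u^*(t,x):=\lim_{s\to-\infty}u(t,x;M,s)
\]
exists, is an entire solution of \eqref{aux-main-eq1}, and is pointwise positive (letting $l\to\infty$); parabolic estimates give regularity and locally uniform convergence.

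Next I would establish uniqueness and the convergence statement together via the part metric. For $u_0\in C_{\rm unif}^b([0,\infty),\RR^+)$ with $\inf u_0>0$, comparison with $\bar u\equiv M$ above and with the $\tilde\phi_l$ below traps $u(t,\cdot;u_0)$, after a transient, in an order interval $[m,M]$ with $m>0$; a similar argument using (H5) on any near-zero sequence of $(t,x)$-values shows $\inf_{t,x}u^*>0$. On such order intervals define
\[
\rho(t):=\sup_{x\ge 0}\bigl|\ln u(t,x;u_0)-\ln u^*(t,x)\bigr|.
\]
A short computation using $f_u<0$ shows that for $\beta>1$, $\beta v$ (resp.\ $\beta^{-1}v$) is a strict super- (resp.\ sub-) solution of \eqref{aux-main-eq1} whenever $v$ solves it, and standard comparison then gives that $\rho$ is nonincreasing in $t$. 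The principal obstacle is that the spatial domain is unbounded and the supremum need not be realized, so monotonicity alone does not force $\rho(t)\to 0$. Here (H5) plays the decisive role: along a sequence $(t_n,x_n)$ with $t_n\to\infty$ and $x_n\ge 0$ nearly realizing $\lim_{t\to\infty}\rho(t)$, I would extract a subsequence for which $f(t+t_n,x+x_n,u)\to g(t,x,u)\in H(f)$ and, by interior parabolic compactness, the space--time translates of $u(\cdot;u_0)$ and of $u^*$ converge to bounded positive entire solutions $U,V$ of the $g$-equation with common part-metric value $\rho_\infty$, now realized at $x=0$. A second application of the strict sub-homogeneity plus the strong maximum principle forces $\rho_\infty=0$, hence $\rho(t)\to 0$; specializing to $u_0=u^*(s,\cdot)$ yields uniqueness of $u^*$.

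Almost periodicity of $u^*$ in $t$ uniformly in $x$ then follows by combining (H2), (H5), the Bochner-type characterization in Remark \ref{almost-periodic-rk}(1), and the uniqueness just established for every hull-element of $f$: any two iterated limits of $t$-shifts of $u^*$ produce two bounded positive entire solutions of the same limit equation, which must coincide, and this is precisely the uniform-in-$x$ almost periodicity of $u^*$. For the spatially-independent case $f(t,x,u)\equiv f(t,u)$, the ODE $\dot v=vf(t,v)$ is monostable and admits a unique positive almost periodic solution $V^*(t)$ bounded above and away from zero by a one-dimensional version of the part-metric argument (or by classical results in \cite{Fink}); since $V^*$ is trivially a spatially constant solution of \eqref{aux-main-eq1} satisfying the Neumann condition, uniqueness of $u^*$ forces $u^*(t,x)\equiv V^*(t)$.
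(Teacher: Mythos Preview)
Your overall architecture matches the paper's: pullback from a large constant to build an entire solution, a part-metric contraction driven by $f_u<0$, and hull-compactness to upgrade monotonicity to convergence and to obtain almost periodicity. The paper packages this as Proposition~\ref{unbounded-prop1}, and your compactness route to $\rho(t)\to 0$ is a legitimate variant of its quantitative $\delta$-drop argument (Proposition~\ref{part-metric-prop}(2)(ii)).

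There is, however, a real gap where you claim $\inf_{t,x}u^*(t,x)>0$ and, in parallel, that $u(t,\cdot;u_0)$ is eventually trapped in a constant order interval $[m,M]$. The Neumann--Dirichlet sub-solutions $\tilde\phi_l$ produced by (H3) are anchored at $x=0$ and vanish for $x\ge l$; letting $l\to\infty$ gives only pointwise positivity on compact sets, not a uniform-in-$x$ lower bound. Your appeal to (H5) alone is not enough: after translating by $x_n\to\infty$ and passing to a limit equation $g$, the strong maximum principle yields only that the limit profile is identically zero, and to derive a contradiction you must know that the $g$-equation still admits a positive sub-solution near the limit point. That is exactly the content of (H4), which guarantees $\tilde\lambda(a(\cdot,\cdot+y),L)>0$ for all large shifts $y$, and which you never invoke. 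The paper handles this via Lemma~\ref{unbounded-domain-lm2} (using (H4) together with (H5) to produce Dirichlet sub-solutions on shifted intervals $(y,y+L)$ with a lower bound uniform in $y\ge y^*$) and Lemma~\ref{unbounded-domain-lm3} (combining this with an (H3)-based bound on a finite initial segment). Without the uniform positive lower bound on $u^*$, the part metric between $u^*$ and $u(t,\cdot;u_0)$ need not be finite, so your contraction argument cannot start.
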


The following theorem is about the spreading and vanishing dichotomy of \eqref{main-eq}.

\begin{theorem}[Spreading-vanishing dichotomy]
\label{main-thm2}
Assume (H1)-(H5). For any given $h_0>0$ and $u_0(\cdot)$ satisfying  \eqref{initial-value},
the following hold.
\begin{itemize}
\item[(1)]
Either

(i)
$h_{\infty}\le l^*$
and  $\lim_{t\to+\infty} u(t,x;u_0,h_0)=0$

or

(ii)  $h_{\infty}=\infty$ and
$\lim_{t\to\infty}[u(t,x;u_0,h_0)-u^*(t,x)]=0$
locally uniformly for $x\in[0,+\infty)$, where $u^*(t,x)$ is as
in Theorem \ref{main-thm1}.

\item[(2)] If $h_0\ge l^*$, then $h_\infty=\infty$.

\item[(3)]
Suppose $h_{0}<l^*$. Then there exists $\mu^{*}>0$
such that spreading occurs if $\mu>\mu^{*}$ and vanishing occurs if $\mu\le \mu^{*}$.
\end{itemize}
\end{theorem}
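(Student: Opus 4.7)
The plan is to prove the three parts in order, with part (1) providing the structural dichotomy that makes parts (2) and (3) follow relatively cleanly. Throughout I would rely on: monotonicity of $h(t;u_0,h_0)$ in $t$, comparison principles for both the fixed-boundary problem and the free-boundary problem, the existence/uniqueness of $u^*$ from Theorem \ref{main-thm1}, and the sign of the principal Lyapunov exponent $\lambda(a,l)$ from (H3), with $a(t,x)=f(t,x,0)$.

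For part (1), since $h_\infty\in(h_0,\infty]$, the dichotomy splits on whether $h_\infty<\infty$ or $h_\infty=\infty$. In the finite case, I would first show that $h'(t)\to 0$ and $\|u(t,\cdot;u_0,h_0)\|_{C([0,h(t)])}\to 0$. This follows from uniform Schauder estimates (after straightening the free boundary by $y=x/h(t)$), $\int_0^\infty h'(t)\,dt=h_\infty-h_0<\infty$, and the relation $h'(t)=-\mu u_x(t,h(t))$, which together force any limit along $t_n\to\infty$ to be a solution of the fixed-boundary problem on $[0,h_\infty]$ with both Neumann and Dirichlet data making only the trivial solution possible. Then, to deduce $h_\infty\le l^*$, I would argue by contradiction: if $h_\infty>l^*$, pick $l\in(l^*,h_\infty)$ and $t_0$ with $h(t_0)>l$; since $\lambda(a,l)>0$ by (H3), a positive principal eigenfunction-type lower solution on $[0,l]$ with Dirichlet data at $x=l$ can be inserted under $u(t_0,\cdot;u_0,h_0)$ and shown by the comparison principle to grow, contradicting $u\to 0$.

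In the case $h_\infty=\infty$, I would squeeze $u(t,x;u_0,h_0)$ between two objects that both converge to $u^*$. For the upper bound, extend $u_0$ to a strictly positive $\bar u_0\in C^b_{\rm unif}([0,\infty))$ with $\inf\bar u_0>0$; comparison on the moving domain gives $u(t,x;u_0,h_0)\le u(t,x;\bar u_0)$ where the right-hand side solves \eqref{aux-main-eq1}, and Theorem \ref{main-thm1} forces $u(t,\cdot;\bar u_0)\to u^*$ uniformly, hence the same upper-bound limit locally. For the lower bound, fix $l>l^*$ and pick $t_0$ with $h(t_0)>l$; by (H3) and a proof parallel to Theorem \ref{main-thm1} (already assembled in the preceding section on fixed bounded domains), the Dirichlet problem of KPP type on $[0,l]$ has a unique positive time almost periodic solution $u^*_l(t,x)$ attracting positive data. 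Comparison then gives $\liminf_{t\to\infty}[u(t,x;u_0,h_0)-u^*_l(t,x)]\ge 0$ locally on $[0,l)$, and finally $u^*_l\nearrow u^*$ as $l\to\infty$ by monotonicity of the solution sets and uniqueness from Theorem \ref{main-thm1}. Part (2) is then immediate: if $h_0\ge l^*$, strict monotonicity of $h$ gives $h(t)>l^*$ for all $t>0$, ruling out the vanishing branch of the dichotomy.

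For part (3), I would exploit the monotone dependence on $\mu$ (from a standard comparison argument: if $\mu_1<\mu_2$ then $h^{\mu_1}(t)\le h^{\mu_2}(t)$) and define $\mu^*=\sup\{\mu>0:\text{vanishing occurs}\}$. Nonemptiness of the vanishing set for $\mu$ small follows by building a time-independent supersolution on a fixed interval $[0,l_0]$ with $h_0<l_0<l^*$ using $\lambda(a,l_0)<0$ (guaranteed by the definition of $l^*$ as the threshold and monotonicity of $\lambda(a,\cdot)$); this caps $h(t)\le l_0<l^*$ and hence triggers vanishing via part (1). Nonemptiness of the spreading set for $\mu$ large follows by constructing a subsolution supported on an interval of length slightly exceeding $l^*$: for $\mu$ large enough, $h^\mu$ exceeds $l^*$ before the subsolution has time to decay, after which part (2) ensures $h^\mu_\infty=\infty$. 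Closedness of the vanishing set at $\mu^*$ then comes from continuous dependence of $(u^\mu,h^\mu)$ on $\mu$ on any finite time interval, combined with the sharp characterization $h_\infty\le l^*\Leftrightarrow$ vanishing from part (1).

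The hardest step will be the lower-bound half of Case $h_\infty=\infty$ in part (1): assembling the family $\{u^*_l\}_{l>l^*}$ of almost periodic attractors of the Dirichlet problems on $[0,l]$ and showing they increase to $u^*$ as $l\to\infty$. This requires the bounded-domain analogue of Theorem \ref{main-thm1} (existence, uniqueness, and global attractivity of $u^*_l$), which in turn relies on (H3) via $\lambda(a,l)>0$ and on part-metric/monotone-dynamical-systems arguments analogous to those used for Theorem \ref{main-thm1}; the passage $l\to\infty$ then uses (H5) to control the almost periodic structure uniformly and Dini-type monotone convergence together with uniqueness to identify the limit as $u^*$.
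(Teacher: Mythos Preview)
Your strategy is essentially the paper's, and each step is sound. Two differences are worth noting. In (1)(i) you invert the paper's order: you first argue $u\to 0$ directly (any subsequential limit on $[0,h_\infty]$ satisfies both $u=0$ and $u_x=0$ at $x=h_\infty$, so Hopf forces triviality), then deduce $h_\infty\le l^*$ by inserting a growing subsolution on $[0,l]$ with $l\in(l^*,h_\infty)$. The paper instead first rules out $h_\infty>l^*$ by comparing with the positive almost-periodic attractor on $[0,h_\infty-\epsilon]$ (Proposition~\ref{fixed-boundary-prop1}(2)) and invoking Hopf at $h_\infty$ to contradict $h'(t)\to 0$, and only afterward proves $u\to 0$, splitting into the cases $h_\infty<l^*$ (where $\lambda(a,h_\infty)<0$) and $h_\infty=l^*$. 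Your ordering is arguably cleaner since it avoids that case split. In (3) you actually do more than the paper, which only defines $\mu^*=\sup\{\mu:h_\infty(\mu)<\infty\}$ and shows via continuous dependence and Lemma~\ref{monotone-increase} that the supremum is attained, without verifying $0<\mu^*<\infty$. Your plan to show both regimes are nonempty is the right completion; however, the phrase ``time-independent supersolution'' will not work as stated in the almost-periodic setting---you will need a time-dependent upper barrier $(\bar u,\bar h)$ with $\bar h(t)$ bounded below $l^*$, built from the moving principal bundle $\phi^{l_0}(a\cdot t)$ of Lemma~\ref{lyapunov-exponent-lm0} rather than a static eigenfunction. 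With that adjustment your argument for (3) goes through.
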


We remark that similar results as those in Theorems \ref{main-thm1} and \ref{main-thm2} hold for \eqref{main-doub-eq}
(see Propositions \ref{positive-almost-periodic-solution-doub-prop} and \ref{spreading-vanishing-doub-prop}).

\section{Preliminary}

In this section,
we present some preliminary results to be applied in later sections, including basic properties for principal Lyapunov exponents (see subsection 3.1),
non-increasing property of the so called part metric associated to diffusive KPP equations in both bounded and unbounded domains (see subsection 3.2.),
the asymptotic
dynamics of   diffusive KPP equations  with time almost periodic dependence in fixed bounded environments (see subsection 3.3),  and comparison principles
for free boundary problems (see subsection 3.4).

\subsection{Principal Lyapunov exponents}

Consider \eqref{linearized-eq1}. Let $X=X(l)$, where $X(l)$ is as in \eqref{bounded-domain-space-eq1}. We denote by
$\|\cdot\|$ either the norm in $X$ or in $\mathcal{L}(X,X)$.
Recall that for any $v_0\in X$, \eqref{linearized-eq1} has a unique solution $v(t,\cdot;v_0,a)$ and
$$
\lambda(a,l)=\limsup_{t\to\infty}\frac{\ln\|V(t,a)\|_{X(l)}}{t},
$$
where $V(t,a)v_0=v(t,\cdot;v_0,a)$.
For any $b\in H(a)$, consider also
\begin{equation}
\label{linearized-eq1-1}
\begin{cases}
v_t=v_{xx}+b(t,x)v,\quad 0<x<l\cr
v_x(t,0)=v(t,l)=0.
\end{cases}
\end{equation}
For any $v_0\in X$,
\eqref{linearized-eq1-1} has also a unique solution $v(t,\cdot;v_0,b)$ with $v(0,\cdot;v_0,b)=v_0(\cdot)$.

\begin{lemma}
\label{lyapunov-exponent-lm0}
There is $\phi^l:H(a)\to X^{++}$   satisfying the following properties.
\begin{itemize}
\item[(i)] $\|\phi^l(b)\|=1$ for any $b\in H(a)$ and $\phi^l:H(a)\to X^{++}$ is continuous.

\item[(ii)] $v(t,\cdot;\phi^l(b),b)=\|v(t,\cdot;\phi^l(b),b)\|\phi^l(b(\cdot+t,\cdot))$.

\item[(iii)] $\lim_{t\to\infty}\frac{\ln \|v(t,\cdot;\phi^l(b),b)\|}{t}=\lambda(a,l)$ uniformly in $b\in H(a)$.
\end{itemize}
\end{lemma}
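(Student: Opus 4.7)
The plan is to construct $\phi^l$ via the principal Floquet bundle theory for positive skew-product semiflows generated by \eqref{linearized-eq1-1} over the hull $H(a)$. First I would set up the skew-product semiflow $\Pi_t : H(a) \times X(l) \to H(a) \times X(l)$ defined by $\Pi_t(b, v_0) = (b \cdot t, V(t, b) v_0)$. Because $a$ is almost periodic in $t$ uniformly in $x$, the hull $H(a)$ is compact in its natural topology, and by parabolic regularity each $V(t, b)$ with $t > 0$ is a compact operator on $X(l)$. The strong maximum principle, combined with the Hopf lemma at $x = 0$ (to handle the Neumann boundary) and the Dirichlet condition at $x = l$, ensures that $V(t, b)$ sends $X^+(l) \setminus \{0\}$ into $X^{++}(l)$ for every $t > 0$ and every $b \in H(a)$.

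Second, I would invoke the principal Floquet bundle construction developed by \JM\ and coauthors. The strong positivity and compactness of the cocycle $V(t, b)$, together with compactness of the base $H(a)$, yield a one-dimensional continuous invariant subbundle of $H(a) \times X(l)$ contained in $H(a) \times X^{++}(l)$. Concretely, pick any $v_0 \in X^{++}(l)$; then $V(t_n, b \cdot (-t_n)) v_0 / \|V(t_n, b \cdot (-t_n)) v_0\|$ converges as $t_n \to \infty$ to a limit $\phi^l(b)$ that is independent of $v_0$, thanks to the strict contraction of the part metric (or equivalently the Hilbert projective metric) along the normalised cocycle. Continuity of $\phi^l$ in $b$ follows from joint continuity of $V(t, b)$ in $(t, b)$ combined with this contraction, and the projective invariance
\begin{equation*}
V(t, b)\, \phi^l(b) = \|V(t, b) \phi^l(b)\| \, \phi^l(b \cdot t)
\end{equation*}
is built into the construction, establishing (i) and (ii).

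Third, for (iii) I would use that the translation flow on $H(a)$ is minimal and uniquely ergodic, a consequence of the almost periodicity of $a$. The function $\psi(b) := \ln \|V(1, b) \phi^l(b)\|$ is continuous on the compact hull $H(a)$, and the cocycle identity from (ii) gives
\begin{equation*}
\ln \|v(n, \cdot; \phi^l(b), b)\| = \sum_{k=0}^{n-1} \psi(b \cdot k)
\end{equation*}
for integers $n$, with an $O(1)$ correction interpolating to real $t$ by equicontinuity of $\{V(s, \cdot) : s \in [0, 1]\}$. Unique ergodicity forces the Birkhoff averages to converge uniformly on $H(a)$ to $\int_{H(a)} \psi \, d\mu$, and this common value coincides with $\lambda(a, l)$ by the characterisation of the principal Lyapunov exponent as the exponential growth rate along the dominant positive direction.

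The principal difficulty is securing the continuity of $\phi^l$ on $H(a)$ and the uniformity in (iii). Continuity requires a quantitative contraction along the cocycle, handled via the Hilbert projective metric or part metric in the style of the references already cited in the paper, and exploits strong positivity together with the fact that the base is a minimal compact flow. The uniformity in (iii) genuinely needs unique ergodicity: for merely recurrent (not almost periodic) coefficients one would obtain only a $\mu$-almost everywhere statement. Once continuity is in hand, the passage from integer times to real $t$ and the identification of the limit with $\lambda(a, l)$ are routine.
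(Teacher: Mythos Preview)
Your proposal is correct and is in fact a faithful sketch of the principal Floquet bundle construction from \cite{MiSh1} (see also \cite{MiSh3, ShYi}), which is exactly what the paper invokes: its proof of Lemma~\ref{lyapunov-exponent-lm0} consists of a single citation to those references. One small slip: the Hopf lemma is needed at the Dirichlet endpoint $x=l$ (to secure a strictly negative derivative there, which is what membership in $X^{++}(l)$ requires), not at the Neumann endpoint $x=0$; otherwise your outline matches the cited theory.
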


\begin{proof}
It follows from \cite{MiSh1} (see also \cite{MiSh3, ShYi}).
\end{proof}

\begin{lemma}
\label{lyapunov-exponent-lm1}
$\lambda(a,l)$ is a monotone
increasing function of $a$ and $l$.
\end{lemma}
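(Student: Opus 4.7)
The plan is to use the comparison principle for parabolic equations, combined with the principal eigenfunction description of the growth rate provided by Lemma \ref{lyapunov-exponent-lm0}, and the equivalence (up to constants, in the exponential growth sense) between the norm on $X(l)=\mathcal D(A^\alpha)$ and the sup norm on $[0,l]$ via standard parabolic regularity.

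First I would handle monotonicity in $a$. Assume $a_1(t,x)\le a_2(t,x)$. Taking any $v_0\in X^{++}(l)$, the difference $w(t,x):=v(t,x;v_0,a_2)-v(t,x;v_0,a_1)$ solves
\begin{equation*}
w_t=w_{xx}+a_2(t,x)w+(a_2-a_1)\,v(\cdot,\cdot;v_0,a_1),\quad w_x(t,0)=w(t,l)=0,\ w(0,\cdot)=0,
\end{equation*}
and by the maximum principle $w\ge 0$, so $0\le v(t,\cdot;v_0,a_1)\le v(t,\cdot;v_0,a_2)$. Choosing $v_0=\phi^l(a_1)$ and using Lemma \ref{lyapunov-exponent-lm0}(ii)(iii), the left-hand side grows at rate $\lambda(a_1,l)$; the right-hand side is controlled by $\|V(t,a_2)\|\cdot\|\phi^l(a_1)\|$ which grows at rate $\lambda(a_2,l)$. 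Taking logarithms, dividing by $t$, and letting $t\to\infty$ gives $\lambda(a_1,l)\le\lambda(a_2,l)$.

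Next I would treat monotonicity in $l$. Assume $l_1<l_2$. Because $\phi^{l_1}(a)\in X^{++}(l_1)$ vanishes at $x=l_1$ while $\phi^{l_2}(a)\in X^{++}(l_2)$ is strictly positive on $[0,l_1]$, there exists $c>0$ with $c\,\phi^{l_2}(a)(x)\ge \phi^{l_1}(a)(x)$ for all $x\in[0,l_1]$. Let $v_2(t,x)=v(t,x;c\phi^{l_2}(a),a)$ on $[0,l_2]$ and $v_1(t,x)=v(t,x;\phi^{l_1}(a),a)$ on $[0,l_1]$. Restricting $v_2$ to $[0,l_1]$ gives a supersolution of the Dirichlet problem for $v_1$: the PDE and Neumann condition at $x=0$ are inherited, while $v_2(t,l_1)\ge 0=v_1(t,l_1)$ since $v_2\ge 0$ on $[0,l_2]$ by comparison with $0$. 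As $v_2(0,\cdot)\ge v_1(0,\cdot)$ on $[0,l_1]$, comparison yields $v_2\ge v_1$ on $[0,l_1]\times[0,\infty)$. Therefore the sup norm of $v_2(t,\cdot)$ on $[0,l_2]$ dominates that of $v_1(t,\cdot)$ on $[0,l_1]$; translating this from sup norms to the $X(l_i)$ norms (using that for the positive solutions under consideration, parabolic smoothing makes all these norms equivalent at any fixed later time, hence produces the same exponential rate) and applying Lemma \ref{lyapunov-exponent-lm0}(iii) on both sides gives $\lambda(a,l_1)\le\lambda(a,l_2)$.

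The main technical obstacle I anticipate is the last step: transferring pointwise/sup-norm comparisons into an inequality between the $X(l)$-operator-norm growth rates that define $\lambda(a,l_i)$, since the two spaces $X(l_1)$ and $X(l_2)$ live over different intervals. The way around this is to observe that for any positive solution of \eqref{linearized-eq1} the quantity $t\mapsto\log\|v(t,\cdot)\|$ has the same asymptotic slope whether $\|\cdot\|$ is taken as $\|\cdot\|_{X(l)}$, $\|\cdot\|_{L^\infty([0,l])}$, or an $L^2$-norm, because of the continuous embedding $X(l)\hookrightarrow C^1([0,l])$ and the instantaneous smoothing estimate $\|v(t+1,\cdot)\|_{X(l)}\le C\|v(t,\cdot)\|_{L^\infty}$ coming from the sectoriality of $A$. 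With this equivalence in hand, both monotonicity statements reduce to the two comparison arguments above.
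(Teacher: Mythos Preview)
Your proposal is correct and follows essentially the same approach as the paper: comparison principle combined with the principal eigenfunction from Lemma~\ref{lyapunov-exponent-lm0}, together with parabolic a priori estimates to pass between the sup norm and the $X(l)$ norm in the growth-rate computation. The only cosmetic difference is that the paper scales $\phi^{l_1}(a)$ down by a small $\epsilon$ so that $\epsilon\,\phi^{l_1}(a)\le\phi^{l_2}(a)$ on $[0,l_1]$, whereas you scale $\phi^{l_2}(a)$ up by $c$; by linearity these are equivalent.
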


\begin{proof}
For any fixed $a$, suppose
$0<l_{1}\leq l_{2}$. Note that $v(t,\cdot,\phi^{l_1}(a),a)$  and
$v(t,\cdot,\phi^{l_2}(a),a)$ are solutions for the following problems, respectively,
\begin{equation*}
\begin{cases}
\label{linearized-eq3}
v_t=v_{xx}+a(t,x)v, \quad 0<x<l_{1}\cr
v_x(t,0)=v(t,l_{1})=0
\end{cases}
\end{equation*}
and
\begin{equation*}
\begin{cases}
\label{linearized-eq4}
v_t=v_{xx}+a(t,x)v,\quad 0<x<l_{2}\cr
v_x(t,0)=v(t,l_{2})=0.
\end{cases}
\end{equation*}
Choose $0<\epsilon \ll 1$ such that $\phi^{l_2}(a)>\epsilon \phi^{l_1}(a)$ on $[0,l_1]$. Then,  by
 comparison principle for parabolic equations, we have that
$$v(t,x;\epsilon\phi^{l_1}(a),a)<v(t,x;\phi^{l_2}(a),a)\quad \forall \,\, 0\le x\le l_1.$$
By Lemma \ref{lyapunov-exponent-lm0} and a priori estimates for parabolic equations, we have that
\begin{equation*}
\begin{split}
\lambda(a,l_{2})=&\lim_{t\rightarrow\infty}\frac{\ln\|v(t,\cdot,\phi^{l_2}(a),a)\|_{X(l_2)}}{t} \\
\geq&\lim_{t\rightarrow\infty}\frac{\ln\|v(t,\cdot,\epsilon\phi^{l_1}(a),a)\|_{X(l_1)}}{t} \\
=&\lim_{t\rightarrow\infty}\frac{\ln\epsilon + \ln\|v(t,\cdot,\phi^{l_1}(a),a)\|_{X(l_1)}}{t} \\
=&\lambda(a,l_{1})
\end{split}
\end{equation*}
Thus, $\lambda(a,l)$ is a monotone increasing function of $l$.

If we fix $l$, we can use comparison principle and a priori estimates for parabolic
equations again to get that $\lambda(a,l)$ is a monotone increasing function of $a$.
\end{proof}

In the following, if no confusion occurs, we will write $\phi^l(b)$ as $\phi(b)$.

\begin{lemma}
\label{lyapunov-exponent-lm2} $\lambda(a,l)$ is continuous in $a$.
\end{lemma}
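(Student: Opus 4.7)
The plan is to exploit two elementary observations together with the monotonicity already proved in Lemma \ref{lyapunov-exponent-lm1}. The two observations are: (a) adding a constant to the coefficient shifts the principal Lyapunov exponent by the same constant, i.e., $\lambda(a+c,l) = \lambda(a,l) + c$ for every $c \in \RR$; and (b) Lemma \ref{lyapunov-exponent-lm1} gives monotonicity of $\lambda(\cdot,l)$ with respect to the pointwise order on coefficients. Combining (a) and (b) will yield continuity in the uniform (sup) norm on $\RR\times[0,l]$, which is the natural topology inherited from the hull $H(a)$ when restricted to the bounded spatial domain.

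First I would verify (a). If $v(t,x;v_0,a)$ is the solution of \eqref{linearized-eq1} with coefficient $a$, then a direct computation shows that $w(t,x) := e^{ct}v(t,x;v_0,a)$ solves \eqref{linearized-eq1} with coefficient $a+c$ and the same initial datum $v_0$. Consequently $V(t,a+c)v_0 = e^{ct}V(t,a)v_0$, so $\|V(t,a+c)\|_{X(l)} = e^{ct}\|V(t,a)\|_{X(l)}$, and the definition of $\lambda$ gives $\lambda(a+c,l) = \lambda(a,l) + c$ immediately.

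Next, given a coefficient $a'$ satisfying $\sup_{t\in\RR,\,x\in[0,l]}|a'(t,x)-a(t,x)| < \varepsilon$, we have the pointwise bounds
$$
a(t,x) - \varepsilon \le a'(t,x) \le a(t,x) + \varepsilon.
$$
Applying Lemma \ref{lyapunov-exponent-lm1} together with observation (a) yields
$$
\lambda(a,l) - \varepsilon \;=\; \lambda(a-\varepsilon,l) \;\le\; \lambda(a',l) \;\le\; \lambda(a+\varepsilon,l) \;=\; \lambda(a,l) + \varepsilon,
$$
so $|\lambda(a',l)-\lambda(a,l)| \le \varepsilon$, which is the desired continuity statement.

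The main potential obstacle is clarifying what topology on coefficients is intended. The argument above is clean for sup-norm convergence on $\RR \times [0,l]$, and this is the convergence one gets from the hull topology on $H(a)$ when tested on the bounded spatial interval. If a weaker mode of convergence were required (e.g.\ only pointwise or only on compact time intervals), one would instead combine the principal eigenfunction $\phi^l(b)$ from Lemma \ref{lyapunov-exponent-lm0}, its continuity in $b$, and parabolic a priori estimates to pass to the limit in the formula $\lambda(a,l) = \lim_{t\to\infty} t^{-1}\ln\|v(t,\cdot;\phi^l(a),a)\|$; but for the present purposes the shift-plus-monotonicity argument should suffice.
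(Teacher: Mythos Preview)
Your proof is correct and follows essentially the same approach as the paper: both establish the shift identity $\lambda(a+c,l)=\lambda(a,l)+c$ (the paper does this for $c=\pm 1/k$ by observing that $e^{\pm t/k}v(t,x;\phi(a),a)$ solves the shifted equation) and then combine it with the monotonicity from Lemma~\ref{lyapunov-exponent-lm1} to sandwich $\lambda(a',l)$. Your write-up is in fact a bit more explicit about the final sandwich step than the paper's.
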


\begin{proof}
For any $k\ge 1$, Consider the following problem
\begin{equation}
\begin{cases}
\label{linearized-eq5}
v_t=v_{xx}+v(a(t,x)\pm \frac{1}{k}),\quad 0<x<l\cr
v_x(t,0)=v(t,l)=0.
\end{cases}
\end{equation}
Note that
$e^{\pm \frac{1}{k}t}v(t,x;\phi(a),a)$ is a solution of
\eqref{linearized-eq5}. It follows from Lemma \ref{lyapunov-exponent-lm0} that
\begin{equation*}
\begin{split}
\lambda(a\pm \frac{1}{k},l)=&\lim_{t\to\infty}
\frac{\ln \|v(t,\cdot;\phi(a),a)\|}{t}\pm \frac{1}{k} \\
=&\lambda(a,l)\pm \frac{1}{k}.
\end{split}
\end{equation*}
Let $k\to\infty$, we can get that
$\lambda(a\pm \frac{1}{k},l)-\lambda(a,l)\to 0$.
This together with Lemma \ref{lyapunov-exponent-lm1} implies that
 $\lambda(a,l)$ is continuous in $a$.
\end{proof}

\begin{lemma}
\label{lyapunov-exponent-lm3} Suppose that $a(t,x)\equiv a(t)$. Then
$$
\lambda(a,l)=\hat a+\lambda_0(l),
$$
where $\hat a=\lim_{t\to\infty}\frac{1}{t}\int_0^t a(s)ds$ and $\lambda_0(l)$ is the principal
eigenvalue of
\begin{equation}
\label{egv-eq}
\begin{cases}
u_{xx}=\lambda u,\quad 0<x<l\cr
u_x(0)=u(l)=0.
\end{cases}
\end{equation}
\end{lemma}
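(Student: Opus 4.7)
\medskip

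\textbf{Proof proposal.} The plan is to exploit the fact that when $a$ depends only on $t$, the equation $v_t = v_{xx} + a(t)v$ decouples into a purely temporal factor and a purely spatial eigenvalue problem, so $\lambda(a,l)$ can be computed explicitly.

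First I would introduce $\varphi_0\in C^2([0,l])$, the positive principal eigenfunction of \eqref{egv-eq} associated with $\lambda_0(l)$, normalized so that $\|\varphi_0\|_{X(l)}=1$; standard Sturm--Liouville theory shows $\varphi_0 \in X^{++}(l)$ and $\lambda_0(l)<0$. Next, I would verify by direct differentiation that
\begin{equation*}
v(t,x) \;=\; \exp\!\Bigl(\lambda_0(l)\,t + \int_0^t a(s)\,ds\Bigr)\,\varphi_0(x)
\end{equation*}
is the unique solution of \eqref{linearized-eq1} with initial datum $\varphi_0$, since $v_t = (\lambda_0(l)+a(t))v$ and $v_{xx} = \lambda_0(l)v$, and $\varphi_0$ satisfies the boundary conditions $\varphi_0'(0)=\varphi_0(l)=0$.

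Second, I would use this explicit formula to pin down the Lyapunov exponent. Since $\phi^l(a)\in X^{++}(l)$, there exist constants $0<c_1\le c_2$ with $c_1\varphi_0 \le \phi^l(a) \le c_2\varphi_0$ pointwise on $[0,l]$. By the parabolic comparison principle applied to \eqref{linearized-eq1},
\begin{equation*}
c_1\,\exp\!\Bigl(\lambda_0(l)t + \textstyle\int_0^t a(s)\,ds\Bigr)\,\varphi_0(x)
\;\le\; v(t,x;\phi^l(a),a) \;\le\; c_2\,\exp\!\Bigl(\lambda_0(l)t + \textstyle\int_0^t a(s)\,ds\Bigr)\,\varphi_0(x).
\end{equation*}
Taking $\|\cdot\|_{X(l)}$ norms, then logarithms, dividing by $t$, and letting $t\to\infty$, the prefactors $c_1,c_2$ contribute $0$, the factor $\frac{1}{t}\int_0^t a(s)\,ds\to \hat a$ by almost periodicity, and we obtain
\begin{equation*}
\lim_{t\to\infty}\frac{\ln\|v(t,\cdot;\phi^l(a),a)\|_{X(l)}}{t} \;=\; \lambda_0(l)+\hat a.
\end{equation*}
By Lemma \ref{lyapunov-exponent-lm0}(iii) the left-hand side equals $\lambda(a,l)$, giving the claim.

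No step presents a genuine obstacle: the only point requiring a little care is that the definition of $\lambda(a,l)$ uses an operator norm rather than a pointwise evaluation, which is why I invoke Lemma \ref{lyapunov-exponent-lm0}(iii) (with $b=a$) to express $\lambda(a,l)$ as the growth rate along the specific trajectory starting from $\phi^l(a)$. Once that identification is made, the separation-of-variables solution and a two-sided pointwise comparison finish the argument.
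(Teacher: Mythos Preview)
Your argument is correct. The paper's own proof is a bit more streamlined: instead of building the explicit separated solution and comparing it with the trajectory through $\phi^l(a)$, the authors simply set $\tilde v(t,x)=v(t,x)\exp\bigl(-\int_0^t a(s)\,ds\bigr)$, observe that $\tilde v$ solves the autonomous problem $\tilde v_t=\tilde v_{xx}$ with the same boundary conditions, and conclude $\lambda(a,l)=\hat a+\lambda(0,l)=\hat a+\lambda_0(l)$. So both proofs exploit the decoupling of the $t$- and $x$-dependence; the difference is that the paper factors the temporal part out of \emph{every} solution at once via a change of unknown, while you construct one explicit solution and then transfer its growth rate to $\phi^l(a)$ by a two-sided comparison and Lemma~\ref{lyapunov-exponent-lm0}(iii). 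The paper's route is shorter; yours makes the role of $\lambda_0(l)$ and $\hat a$ perhaps more transparent.
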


\begin{proof}
Let $\tilde v(t,x)=v(t,x)e^{-\int_0^t a(s)ds}$.  Then \eqref{linearized-eq1} becomes
$$
\begin{cases}
\tilde v_t=\tilde v_{xx},\quad 0<x<l\cr
\tilde v_x(t,0)=\tilde v(t,l)=0.
\end{cases}
$$
It then follows that $\lambda(a,l)=\hat a+\lambda(0,l)$. It is clear that
$\lambda(0,l)=\lambda_0(l)$. The lemma then follows.
\end{proof}

\begin{remark}
\label{Lyapunov-exp-rk}
 Let $a(t,x)$ be a given $C^1$ function which is  almost periodic function in $t$ uniformly in $x$ in bounded sets and $\gamma\in\RR$.
Consider
\begin{equation}
\label{aux-linearized-eq1}
\begin{cases}
v_t=v_{xx}+\gamma v_x+a(t,x)v,\quad 0<x<l\cr
v(t,0)=v(t,l)=0.
\end{cases}
\end{equation}
Let $\tilde X(l)$ be as in \eqref{bounded-domain-space-eq2}.
Then, for any $v_0\in \tilde X(l)$, \eqref{aux-linearized-eq1} has a unique solution
$v(t,\cdot;v_0,a)$ with $v(0,\cdot;v_0,a)=v_0(\cdot)$. Let
$$
\tilde\lambda(a,\gamma,l)=\limsup_{t\to\infty}\frac{\ln\|V(t,a)\|_{\tilde X(l)}}{t}
$$
where $V(t,a)v_0=v(t,\cdot;v_0,a)$.
$\tilde\lambda(a,\gamma,l)$ is called the {\it principal Lyapunov exponent} of \eqref{aux-linearized-eq1}.
Principal Lyapunov exponent theory for \eqref{linearized-eq1} also holds for \eqref{aux-linearized-eq1}.
In particular, $\tilde\lambda(a,\gamma,l)$ is continuous in $a$ and $\gamma$.

%(2) Let $a(t,x)$ be an almost periodic function of $t$ uniformly in $x$ in bounded sets and $a(t,-x)=a(t,x)$.
%Consider
%\begin{equation}
%\label{aux-linearized-eq2}
%\begin{cases}
%v_t=v_{xx}+a(t,x),\quad -l<x<l\cr
%v(t,-l)=v(t,l)=0.
%\end{cases}
%\end{equation}
%Similarly, we can define the principal Lyapunov exponent, denoted by $\bar\lambda(a,l)$, of \eqref{aux-linearized-eq2}.
%Note that if $v_0\in C([-l,l])$ satisfies that $v_0(-l)=v_0(l)=0$ and $v_0(-x)=v_0(x)$, then $v(t,-x;v_0,a)=v(t,x;v_0,a)$ and
%then $v_x(t,0;v_0,a)=0$, where $v(t,x;v_0,a)$ is the solution of \eqref{aux-linearized-eq2} with $v(0,x;v_0,a)=v_0(x)$.
%It can then be proved that $\bar\lambda(a,l)=\lambda(a,l)$.
\end{remark}

\subsection{Part metric associated to diffusive KPP equations}

In this subsection, we present the non-increasing property of the so called part metric associated to \eqref{aux-main-eq1},
and the following
diffusive KPP equations  with time almost periodic dependence in fixed bounded domain,
\begin{equation}
\label{fixed-boundary-main-eq1} \begin{cases} u_t=u_{xx}+uf(t,x,u),\quad
0<x<l\cr u_x(t,0)=u(t,l)=0.
\end{cases}
\end{equation}
Throughout this subsection, we assume (H1) and (H2).
Let
$$
H(f)={\rm cl}\{f(\cdot+\tau,\cdot,\cdot)\,|\, \tau\in\RR\},
$$
where the closure is taken in the open compact topology. Observe that for any $g\in H(f)$, $g$ also satisfies (H1) and (H2).

First, consider \eqref{fixed-boundary-main-eq1}. For given $g\in H(f)$, we also consider
\begin{equation}
\label{fixed-boundary-eq3} \begin{cases} u_t=u_{xx}+ug(t,x,u),\quad
0<x<l\cr u_x(t,0)=u(t,l)=0
\end{cases}
\end{equation}
 Let $X(l)$ be as in \eqref{bounded-domain-space-eq1}.
By semigroup theory, for any $g\in H(f)$ and $u_0\in X(l)$, \eqref{fixed-boundary-eq3}
has a unique (local) solution $u(t,x;u_0,g)$ with $u(0,x;u_0,g)=u_0(x)$.
Note that $u(t,x;u_0,s):=u(t-s,x;u_0,f(\cdot+s,\cdot,\cdot))$ is the solution of
\eqref{fixed-boundary-main-eq1} with $u(s,x;u_0,s)=u_0(x)$. By (H1) and comparison principle for
parabolic equations, if $u_0\in X^+(l)$, then $u(t,\cdot;u_0,g)$ exists and $u(t,\cdot;u_0,g)\in X^+(l)$ for all $t>0$.
Moreover, if $u_0\in X^+(l)\setminus\{0\}$, then $u(t,\cdot;u_0,g)\in X^{++}(l)$ for $t>0$.

For any $u_1,u_2\in X^{++}(l)$, we can define the so called part metric, $\rho(u_1,u_2)$, between $u_1$ and $u_2$, as follows,
\begin{equation}
\label{part-metric-eq1}
\rho(u_1,u_2)=\inf\{\ln\alpha\,|\, \alpha\ge 1,\,\, \frac{1}{\alpha}u_1(\cdot)\le u_2(\cdot)\le\alpha u_1(\cdot)\}.
\end{equation}
Note that if $u_1,u_2\in X^{++}(l)$, then $u(t,\cdot;u_i,g)\in X^{++}(l)$ $(i=1,2$) for any $t>0$ and $g\in H(f)$. Hence
$\rho(u(t,\cdot;u_1,g),u(t,\cdot;u_2,g))$ is also well defined.

Next, consider  \eqref{aux-main-eq1} and  consider also
\begin{equation}
\label{unbounded-eq3}
\begin{cases}
u_t=u_{xx}+ug(t,x,u),\quad 0<x<\infty\cr
u_x(t,0)=0
\end{cases}
\end{equation}
for all $g\in H(f)$.

Let
\begin{equation}
\label{tidle-x-space}
\tilde X=\{u\in C([0,\infty))\,|\, u\quad \text{is uniformly continuous and bounded on}\,\, \, [0,\infty)\}
\end{equation}
with norm $\|u\|=\sup_{x\in[0,\infty)}|u(x)|$ and
$$
\tilde X^+=\{u\in \tilde X \,|\, u(x)\ge 0\},
$$
$$
\tilde X^{++}=\{u\in \tilde X \,|\, \inf u(x)>0\}.
$$
Note that $\tilde X^{++}$ is not empty and is an open subset of $\tilde X^+$.
By semigroup theory (see \cite{PA}), for any $g\in H(f)$ and $u_0\in \tilde X$, \eqref{unbounded-eq3}
has a unique solution $u(t,x;u_0,g)$ with $u(0,x;u_0,g)=u_0(x)$.
By (H1) and comparison principle for parabolic equations, if $u_0\in \tilde X^+$, then
$u(t,\cdot;u_0,g)$ exists and $u(t,\cdot;u_0,g)\in \tilde X^+$ for all $t>0$. Moreover, if $u_0\in\tilde X^{++}$, then
$u(t,\cdot;u_0,g)\in \tilde X^{++}$ for all $t>0$.

For given $u_1,u_2\in \tilde X^{++}$, we can also define the part metric, $\rho(u_1,u_2)$, between $u_1$ and $u_2$ as follows,
$$
\rho(u_1,u_2)=\inf\{\ln\alpha\,|\, \alpha\ge 1, \,\, \frac{1}{\alpha}u_1(\cdot)\le u_2(\cdot)\le \alpha u_1(\cdot)\}.
$$
Note that if $u_1,u_2\in\tilde X^{++}$, then $\rho(u(t,\cdot;u_1,g),u(t,\cdot;u_2,g))$ is well defined for $t>0$.

We now have the following proposition about the non-increasing of part metric.

\begin{proposition}
\label{part-metric-prop}
\begin{itemize}

\item[(1)] Consider \eqref{fixed-boundary-eq3} and let $u(t,\cdot;u_0,g)$ denote the solution of \eqref{fixed-boundary-eq3}
with $u(0,\cdot;u_0,g)=u_0(\cdot)\in X(l)$.
 For given $u_0,v_0\in X^{++}(l)$ with $u_0\not = v_0$, $\rho(u(t,\cdot;u_0,g),u(t,\cdot;v_0,g))$ is strictly decreasing as $t$
increases.

\item[(2)] Consider \eqref{unbounded-eq3} and let $u(t,\cdot;u_0,g)$ denote the solution of \eqref{unbounded-eq3}
with $u(0,\cdot;u_0,g)\in \tilde X$.
\item[] (i) Given any $u_0,v_0\in \tilde X^{++}$ and $g\in H(f)$, $\rho(u(t,\cdot;u_0,g),u(t,\cdot;v_0,g))$
decreases as $t$ increases.

\item[] (ii)  For any $\epsilon>0$, $\sigma>0$, $M>0$,  and $\tau>0$  with $\epsilon<M$ and
$\sigma\le \ln \frac{M}{\epsilon}$, there is $\delta>0$   such that
for any $g\in H(f)$, $u_0,v_0\in \tilde X^{++}$ with $\epsilon\le u_0(x)\le M$, $\epsilon\le v_0(x)\le M$ for $x\in\RR^{+}$ and
$\rho(u_0,v_0)\ge\sigma$, there holds
$$
\rho(u(\tau,\cdot;u_0,g),u(\tau,\cdot;v_0,g))\le \rho(u_0,v_0)- \delta.
$$
\end{itemize}
\end{proposition}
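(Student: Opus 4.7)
The unifying idea is that the KPP assumption (H1), namely $\sup f_u < 0$ for $u \ge 0$, forces any multiplicative over-approximation $\alpha u$ of a solution $u$ with $\alpha > 1$ to be a \emph{strict} super-solution, so that the strong maximum principle pushes the part metric down. For part (1), set $\alpha_0 = e^{\rho(u_0,v_0)} > 1$ and $U_1(t,\cdot) = u(t,\cdot;u_0,g)$, $U_2(t,\cdot) = u(t,\cdot;v_0,g)$. The identity
\[
(\alpha_0 U_2)_t - (\alpha_0 U_2)_{xx} - (\alpha_0 U_2)\, g(t,x,\alpha_0 U_2) \;=\; \alpha_0 U_2\bigl[g(t,x,U_2) - g(t,x,\alpha_0 U_2)\bigr]
\]
is nonnegative and strictly positive wherever $U_2 > 0$, so $\alpha_0 U_2$ is a strict super-solution while $U_1$ is a solution, with $\alpha_0 U_2(0,\cdot) \ge U_1(0,\cdot)$. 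Parabolic comparison and the strong maximum principle give $\alpha_0 U_2(t,\cdot) > U_1(t,\cdot)$ on $[0,l)$ for every $t > 0$, and Hopf's lemma applied to $w := \alpha_0 U_2 - U_1$ at the Dirichlet endpoint $x=l$ yields $|U_{1,x}(t,l)| < \alpha_0 |U_{2,x}(t,l)|$. A continuity/compactness argument on $[0,l]$ then produces some $\alpha_1 < \alpha_0$ with $\alpha_1 U_2(t,\cdot) \ge U_1(t,\cdot)$; combining with the symmetric inequality gives $\rho(U_1(t,\cdot),U_2(t,\cdot)) < \rho(u_0,v_0)$, and iterating shows strict monotone decrease in $t$.

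Part (2)(i) is the same super-solution computation on the half line followed by parabolic comparison, which yields the weak inequality $\alpha_0 u(t,\cdot;v_0,g) \ge u(t,\cdot;u_0,g)$ (and its symmetric counterpart), hence the non-increase of $\rho$. Strict decrease cannot be claimed in this setting because the ratio may approach $\alpha_0$ as $x \to \infty$.

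For part (2)(ii) I would run a contradiction-compactness argument. Suppose the uniform decay fails, producing sequences $g_n \in H(f)$ and $u_{0,n}, v_{0,n} \in \tilde X^{++}$ with $\epsilon \le u_{0,n}, v_{0,n} \le M$, $\rho(u_{0,n},v_{0,n}) \ge \sigma$, and $\rho(u(\tau,\cdot;u_{0,n},g_n), u(\tau,\cdot;v_{0,n},g_n)) > \rho(u_{0,n},v_{0,n}) - 1/n$. First evolve for time $\tau/2$: by parabolic regularity plus a priori upper and lower bounds, the solutions $U_i^n(\tau/2,\cdot)$ lie in $[\epsilon',M']$ and are uniformly equicontinuous. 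Then pick points $y_n$ at which the ratio at time $\tau$ approximately realizes the part metric, and use (H5) together with the equicontinuity to pass $g_n(\cdot,\cdot+y_n,\cdot)$ and $U_i^n(\tau/2,\cdot+y_n)$ to subsequential limits $g_\infty$ and $\bar U_i(\tau/2,\cdot)$ on the appropriate limit domain (a half line if $\{y_n\}$ stays bounded, the whole line if $y_n \to \infty$). On the limit problem the part metric between times $\tau/2$ and $\tau$ is preserved, but the super-solution argument of part (1) combined with the fact that the extremal ratio is \emph{attained} at the origin after shifting lets the strong maximum principle force a strict decrease, contradicting the preserved equality.

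The main obstacle is in part (2)(ii): the half line is not translation-invariant and the raw initial data $u_{0,n}, v_{0,n}$ are only bounded (not equicontinuous), so Arzel\`a--Ascoli does not apply directly to them. The cure is parabolic smoothing on $[0,\tau/2]$ together with (H5), which matches the translation of $g_n$ by $y_n$ to a hull element; the two cases $\{y_n\}$ bounded and $y_n \to \infty$ need separate bookkeeping, and one must carry the bounds $\epsilon', M'$ and the lower bound $\sigma' > 0$ on the part metric through the first evolution step so that the extracted limits satisfy hypotheses allowing the strong-maximum-principle contradiction.
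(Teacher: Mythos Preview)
Your treatment of parts (1) and (2)(i) is essentially the paper's argument: show that $\alpha_0 U_2$ is a strict super-solution because $f_u<0$, apply comparison and the strong maximum principle, and use Hopf's lemma at $x=l$ to close the gap at the Dirichlet endpoint.

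For part (2)(ii) you take a genuinely different route. The paper does \emph{not} argue by contradiction and compactness; it gives a direct quantitative construction. Using the uniform bound $\sup f_u<0$ and the a priori bounds $\epsilon_1\le u\le M_1$ on $[0,\tau]$, one gets a uniform lower bound $\delta_1>0$ for the forcing term $\alpha^*U_2\,[g(t,x,U_2)-g(t,x,\alpha^*U_2)]$, so that $\alpha^*U_2-\tfrac{\delta_1}{2}t$ is itself a super-solution on a short interval $[0,\tau_1]$. Comparison then yields $u(\tau_1,\cdot;v_0,g)\le(\alpha^*-\delta_2)u(\tau_1,\cdot;u_0,g)$ with an explicit $\delta_2$, and the claimed $\delta$ is written down in closed form. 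Your compactness argument (smooth on $[0,\tau/2]$, recentre at a near-extremal point $y_n$, pass to a limit problem, and invoke the strong maximum principle at the attained extremum) is correct in spirit and would also close the proof, but it is non-constructive and gives no formula for $\delta$. One point of care: you invoke (H5), but the proposition is stated and proved in the paper under (H1)--(H2) only; the compactness you need for the shifted nonlinearities $g_n(\cdot,\cdot+y_n,\cdot)$ already follows from the uniform bound on $Df$ in (H1) via Arzel\`a--Ascoli, so (H5) is not actually required. In short, the paper's approach is more elementary and explicit, while yours is more conceptual but leans on a limiting procedure that, as written, cites a hypothesis not yet in force.
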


\begin{proof}   The proposition can be proved by the similar arguments as in \cite[Proposition 3.4]{KoSh}.
For the completeness, we provide a proof in the following.

 (1)  For any $u_0,v_0\in  X^{++}(l)$ with $u_0\not = v_0$,  there is $\alpha^*> 1$ such that  $\rho(u_0,v_0)=\ln \alpha^{*}$
and $\frac{1}{\alpha^{*}}u_0\leq v_0\leq \alpha^{*} u_0$.
By comparison principle for parabolic equations,
$$u(t, \cdot;v_0,g)\le u(t, \cdot; \alpha^{*}u_0,g)\quad {\rm for}\quad t>0.
$$
Let
 $$v(t, x)=\alpha^{*}u(t, x; u_0,g).$$
We then have
\begin{align*}
v_{t}(t,x)&=v_{xx}(t,x)+v(t,x)g(t,x, u(t,x;u_0,g))\\
& =
v_{xx}(t,x)+v(t,x)g(t, x,v(t,x))+v(t,x)g(t, x,u(t,x;u_0,g))-v(t,x)g(t,x,v(t,x))\\
&> v_{xx} (t,x)+ v(t,x)g(t, x,v(t,x))\quad \ for \ all  \ t>0,\quad 0\le x<l,
\end{align*}
and
$$
v_x(t,0)=0,\quad v(t,l)=0 \  \ {\rm for} \ {\rm all} \ t>0.
$$
By strong comparison principle  for parabolic equations,
$$
u(t,x;\alpha^* u_0,g)< \alpha^* u(t,x;u_0,g)\quad {\rm for}\quad 0\le x < l.
$$
Then by Hopf lemma for parabolic equations, there is $\tilde\alpha^*<\alpha^*$ such that
$$
u(t,x;\alpha^* u_0,g)\le \tilde \alpha^* u(t,x;u_0,g)\quad {\rm for}\quad 0\le x\le l
$$
and hence
$$
u(t,\cdot;v_0,g)\leq \tilde \alpha^* u(t,\cdot;u_0,g)
$$
for $t>0$.
Similarly, we can prove that
$$
\frac{1}{\bar \alpha^*}u(t,\cdot;u_0,g)\le u(t,\cdot;v_0,g)
$$
for some $\bar \alpha^*<\alpha^*$ and  $t>0$. It then follows that
$$
\rho(u(t,\cdot;u_0,g),u(t,\cdot;v_0,g))< \rho(u_0,v_0)\quad \ for \ all \,\, t\ge 0
$$
and then
$$
\rho(u(t_2,\cdot;u_0,g),u(t_2,\cdot;v_0,g))< \rho(u(t_1,\cdot;u_0,g),u(t_1,\cdot;v_0,g))
\quad \ for \ all \,\, 0\le t_1< t_2.
$$

(2) (i) It follows from the arguments in (1).

(ii)
Let $\epsilon>0$, $\sigma>0$, $M>0$, and $\tau>0$ be given and $\epsilon<M$, $\sigma<\ln \frac{M}{\epsilon}$. First,  we claim that
there are $\epsilon_1>0$ and $M_1>0$ such that
for any $g\in H(f)$ and $u_0\in \tilde{X}^{++}$ with $\epsilon\le u_0(x)\le M$ for $x\in\RR^+$, there holds
\begin{equation*}
\label{part-metric-eq1}
\epsilon_1\le u(t,x;u_0,g)\le M_1\quad \ for \ all \,\, t\in[0,\tau],\,\, x\in \RR^+.
\end{equation*}
In fact, let $\tilde M>0$ be such that  $f(t,x,u)<0$ for $u\ge \tilde M$. Then for $0<\tilde\epsilon<\max\{\epsilon,\tilde M\}$,
$u(t,\cdot;u_{\tilde \epsilon},g)\le \tilde{M}$ for all $t\ge 0$ and $g\in H(f)$, where $u_{\tilde\epsilon}(x)\equiv \tilde \epsilon$.
Note that $g(t,x,u)\ge \tilde \alpha =\inf_{t\in\RR,x\in\RR^+}f(t,x,\tilde M)$ for $u\le \tilde M$.
Hence by comparison principal for parabolic equations,
$$
\tilde M\ge u(t,x;u_{\tilde\epsilon},g)\ge e^{\tilde{\alpha} t} \tilde \epsilon\quad \ for \ all \,\, t\ge 0,\quad x\in \RR^+.
$$
The claim then follows.

Let
\begin{equation*}
\label{part-metric-eq2}
\delta_1=\epsilon_1^2 e^\sigma (1-e^\sigma)\sup_{g\in H(f),t\in[0,\tau],x\in \RR^+,u\in[\epsilon_1,M_1M/\epsilon]}g_u(t,x,u).
\end{equation*}
Then $\delta_1>0$ and there is $0<\tau_1\le\tau$ such that
\begin{equation}
\label{part-metric-eq3-1}
\frac{\delta_1}{2}\tau_1<e^\sigma\epsilon_1
\end{equation}
and
\begin{equation}
\label{part-metric-eq3-2}
\Big|\frac{\delta_1}{2}tv g_u(t,x,w)\Big|+\Big|\frac{\delta_1}{2}tg(t,x,v-\frac{\delta_1}{2}t)\Big|\le\frac{\delta_1}{2}
\end{equation}
for all  $g\in H(f),\,\,  t\in [0,\tau_1],\,\, x\in \RR^+,\,\, v,w\in[0,M_1M/\epsilon]$.
Let
\begin{equation}
\label{part-metric-eq4}
\delta_2=\frac{\delta_1\tau_1}{2 M_1}.
\end{equation}
Then $\delta_2<e^\sigma$ and $0<\frac{\delta_2 \epsilon}{M}<1$. Let
\begin{equation}
\label{part-metric-eq5}
\delta=-\ln\big(1-\frac{\delta_2\epsilon}{M}\big).
\end{equation}
Then $\delta>0$. We prove that  $\delta$ defined in \eqref{part-metric-eq5}  satisfies the property in the proposition.

For any $u_0,v_0\in\tilde{X}^{++}$ with $\epsilon\le u_0(x)\le M$ and $\epsilon\le v_0(x)\le M$ for $x\in \RR^+$ and
$\rho(u_0,v_0)\ge\sigma$,  there is $\alpha^*> 1$ such that  $\rho(u_0,v_0)=\ln \alpha^{*}$
and $\frac{1}{\alpha^{*}}u_0\leq v_0\leq \alpha^{*} u_0$.

Note that $e^\sigma\le\alpha^*\le \frac{M}{\epsilon}$. Let
 $$v(t, x)=\alpha^{*}u(t, x; u_0,g)$$
\begin{align*}
v_{t}(t,x)&=v_{xx}(t,x)+v(t,x)g(t,x, u(t,x;u_0,g))\\
& =
v_{xx}(t,x)+v(t,x)g(t, x,v(t,x))+v(t,x)g(t, x,u(t,x;u_0,g))-v(t,x)g(t,x,v(t,x))\\
&\ge  v_{xx} (t,x)+ v(t,x)g(t, x,v(t,x))+\delta_1\quad \forall 0<t\le\tau_1,\,\, x\in \RR^+.
\end{align*}  This together with \eqref{part-metric-eq3-1}, \eqref{part-metric-eq3-2}
implies that
$$
(v(t,x)-\frac{\delta_1}{2}t)_t\ge \big(v(t,x)-\frac{\delta_1}{2}t\big)_{xx}+\big(v(t,x)-\frac{\delta_1}{2}t\big)g\big(t,x,v(t,x)-\frac{\delta_1}{2}t\big)
$$
for $0<t\le \tau_1$.
Then by comparison principle for parabolic equations,
$$
u(t,\cdot;\alpha^* u_0,g)\leq \alpha^* u(t,\cdot; u_0,g)-\frac{\delta_1}{2}t\quad {\rm for}\quad 0<t\le\tau_1.
$$
By \eqref{part-metric-eq4},
$$
u(\tau_1,\cdot;v_0,g)\le (\alpha^*-\delta_2) u(\tau_1,\cdot;u_0,g).
$$
Similarly, it can be proved that
$$
\frac{1}{\alpha^*-\delta_2}u(\tau_1,\cdot;u_0,g)\le u(\tau_1,\cdot;v_0,g).
$$
It then follows that
$$
\rho(u(\tau_1,\cdot;u_0,g),u(\tau_1,\cdot;v_0,g))\le \ln(\alpha^*-\delta_2) =\ln\alpha^*+\ln (1-\frac{\delta_2}{\alpha^*})\le \rho(u_0,v_0)-\delta.
$$
and hence
$$
\rho(u(\tau,\cdot;u_0,g),u(\tau,\cdot;v_0,g))\le\rho(u(\tau_1,\cdot;u_0,g),u(\tau_1,\cdot;v_0,g))
\le\rho(u_0,v_0)-\delta.
$$
\end{proof}

%Note that if $v_0$ is such that $v_0^{'}(x)\ge 0$ for $x\ge 0$, then $v_x(t,x;v_0,g)>0$ for all $x\ge 0$ and $g\in H(f)$, where
%$v(t,x;v_0,g)$ is the solution of \eqref{semi-wave-eq3} and $v(0,x;v_0,g)=v_0(x)$.

\subsection{Asymptotic dynamics of diffusive KPP equations  with time almost periodic dependence on fixed bounded domain}

In this subsection, we consider the asymptotic dynamics of \eqref{fixed-boundary-main-eq1}.
 Throughout this section, we assume that $f$ satisfies (H1) and (H2).

Let $X(l)$ be as in \eqref{bounded-domain-space-eq1} and $u(t,\cdot;u_0,g)$ be the solution of
\eqref{fixed-boundary-eq3}
with $u(0,\cdot;u_0,g)=u_0(\cdot)$.
Observe that \eqref{fixed-boundary-eq3} generates a skew-product
hemodynamical system,
$$\Pi_{t}: X^{+}(l)\times H(f)\to X^{+}(l)\times H(f)$$
of the following form:
$$\Pi_{t}(u_{0},g)=(u(t,\cdot,u_{0},g), g_{t})\quad \forall(u_{0},g)\in X^{+}(l)\times H(f).$$
The system $\Pi_{t}$ is strongly monotone in the sense that
$u(t,\cdot;u_{0},g)\ll u(t,\cdot;v_{0},g)$ for any $0\leq u_{0}\leq v_{0}$
with $u_{0}\neq v_{0}$ and any $t>0$, where we write $u_{0}\ll v_{0}$ if
$v_{0}-u_{0}\in X^{++}(l)$.

\begin{proposition}
\label{fixed-boundary-prop1}
Let $a(t,x)=f(t,x,0)$.
\begin{itemize}
\item[(1)] If $\lambda(a,l)< 0$, then for any $u_0\in X^+(l)$,
$\|u(t,\cdot;u_0,g)\|\to 0$ as $t\to\infty$ uniformly in $g\in H(f)$. In particular,
$\|u(s+t,\cdot;u_0,s)\|\to 0$ as $t\to\infty$ uniformly in
$s\in\RR$.

\item[(2)] If $\lambda(a,l)>0$, then
there exists $u^l:H(f)\to X^{++}(l)$ such that $u^l(g)$ is continuous in $g\in H(f)$,
$u(t,\cdot;u^l(g),g)=u^l(g\cdot t)(\cdot)$ for any $g\in H(f)$, and for any $u_0\in X^+(l)\setminus\{0\}$,
$$
\|u(t,\cdot;u_0,g)-u(t,\cdot;u^l(g),g)\|\to 0
$$
as $t\to\infty$ uniformly in $g\in H(f)$. In particular,
$u^{*,l}(t,x):=u(t,x;u^l(f),f)$ is almost periodic in $t\in\RR$ and for any $u_0\in X^+(l)\setminus\{0\}$,
$$
\|u(s+t,\cdot;u_0,s)-u^{*,l}(s+t,\cdot)\|\to 0
$$
as $t\to\infty$ uniformly in $s\in\RR$, where $u(s+t,\cdot;u_0,s)=u(t,\cdot;u_0,f(\cdot+s,\cdot,\cdot))$.
\end{itemize}
\end{proposition}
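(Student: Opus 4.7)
For part (1), I would argue by linear comparison from above. Since $f_u\le 0$ for $u\ge 0$, every $g\in H(f)$ satisfies $g(t,x,u)\le b_g(t,x):=g(t,x,0)$ on $u\ge 0$; the parabolic comparison principle then yields $0\le u(t,\cdot;u_0,g)\le v(t,\cdot;u_0,b_g)$, where $v$ solves \eqref{linearized-eq1-1} with coefficient $b_g\in H(a)$. Lemma~\ref{lyapunov-exponent-lm0}(iii) gives exponential decay $\|v(t,\cdot;u_0,b_g)\|\to 0$ at rate $\lambda(a,l)<0$, uniformly in $b_g\in H(a)$ and hence in $g\in H(f)$.

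For part (2) the plan has three stages. First, I establish that orbits starting from any $u_0\in X^+(l)\setminus\{0\}$ are eventually trapped in a compact order interval $[\underline U,\overline U]\subset X^{++}(l)$, uniformly in $g\in H(f)$. The upper bound comes from (H1) via a maximum-principle argument showing $\limsup_t\sup_x u(t,x;u_0,g)\le M$, together with parabolic regularity to yield compactness in $X(l)$. For the lower bound I use $\lambda(a,l)>0$: by Lemma~\ref{lyapunov-exponent-lm2} I pick $\delta>0$ with $\lambda(a-\delta,l)>0$, and then $\epsilon_0>0$ so that $f(t,x,u)\ge a(t,x)-\delta$ for $0\le u\le\epsilon_0$; comparison from below with the linear equation $v_t=v_{xx}+(a(t,x)-\delta)v$ (whose positive solutions grow exponentially by Lemma~\ref{lyapunov-exponent-lm0}), together with the strong maximum principle to lift $u$ off zero at some $t_0>0$, forces $u(t,\cdot;u_0,g)$ to exit $[0,\epsilon_0]$ in finite time, and a persistence-plus-compactness argument delivers the uniform trapping interval.

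Second, I construct $u^l(g)$ as a pullback limit on the skew-product $\Pi_t$. For $g\in H(f)$, choose $t_n\to\infty$ with $g\cdot(-t_n)\to g$ in $H(f)$ (available by compactness of the almost periodic hull); by the first stage the sequence $u(t_n,\cdot;u_0,g\cdot(-t_n))$ is precompact in $X^{++}(l)$, and any accumulation point is taken as $u^l(g)$. The invariance $u(t,\cdot;u^l(g),g)=u^l(g\cdot t)$ follows from continuous dependence on the coefficient. Uniqueness of $u^l(g)$ and the uniform-in-$g$ convergence $\|u(t,\cdot;u_0,g)-u(t,\cdot;u^l(g),g)\|\to 0$ both follow from Proposition~\ref{part-metric-prop}(1): the part metric $\rho$ is strictly decreasing along orbits inside $X^{++}(l)$, and combined with compactness of the trapping interval this forces $\rho\to 0$, equivalent to norm convergence. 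Continuity of $g\mapsto u^l(g)$ then comes from uniqueness and continuous dependence, and almost periodicity of $u^{*,l}(t,\cdot)=u^l(f\cdot t)$ is inherited from that of the hull flow.

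The main obstacle is the lower-bound step: weaving together the positive principal Lyapunov exponent, the strong maximum principle, and a persistence argument to produce a single order interval $[\underline U,\overline U]$ attracting every positive orbit uniformly in $g\in H(f)$. A secondary subtlety is upgrading the strict monotonicity of $\rho$ to decay to zero; for this I would either prove a bounded-domain analogue of Proposition~\ref{part-metric-prop}(2)(ii) by mimicking its argument and invoking Hopf's lemma at $x=l$, or exploit compactness of $[\underline U,\overline U]$ to rule out any positive limit of $\rho$ directly.
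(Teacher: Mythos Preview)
Your outline is correct and tracks the paper's proof closely; let me flag two places where the paper's execution differs from yours in ways that dissolve exactly the obstacles you identify.

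For the lower bound (your ``main obstacle''), the paper does not argue via an abstract persistence-plus-compactness scheme. Instead it fixes $\xi>0$ with $\lambda(a-\xi,l)>0$, takes the normalized positive eigenfunction $\phi^l(\bar b)$ from Lemma~\ref{lyapunov-exponent-lm0} for $\bar b=g(\cdot,\cdot,0)-\xi$, finds $T>0$ with $\|v(T,\cdot;\phi^l(\bar b),\bar b)\|\ge 1$ for every $\bar b\in H(a-\xi)$, and then chooses $\epsilon>0$ small enough that $\epsilon\phi^l(\bar b)$ is a subsolution of the nonlinear problem on $[0,T]$. This gives $u(T,\cdot;\epsilon\phi^l(\bar b),g)\ge\epsilon\phi^l(\bar b\cdot T)$, and iterating yields $u(nT,\cdot;\epsilon\phi^l(\bar b),g)\ge\epsilon\phi^l(\bar b\cdot nT)$. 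Since $\phi^l:H(a-\xi)\to X^{++}(l)$ is continuous on a compact set, this is a uniform lower bound in $X^{++}(l)$ along the discrete times $nT$, and the $\omega$-limit set of $(\epsilon\phi^l(\bar a),f)$ therefore sits in $X^{++}(l)\times H(f)$. This replaces your ``exit $[0,\epsilon_0]$ then persist'' step with a single concrete iteration.

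For the construction and uniqueness of $u^l(g)$ (your ``secondary subtlety''), the paper works with the forward $\omega$-limit set of that one orbit rather than a pullback limit, and the uniqueness argument runs \emph{backward} in time rather than forward. If $(u_1,g),(u_2,g)$ both lie in the $\omega$-limit set with $u_1\ne u_2$, then by invariance of $\omega$ the full orbits $(u(t,\cdot;u_i,g),g\cdot t)$, $t\in\RR$, remain in $\omega\subset X^{++}(l)\times H(f)$; hence $\rho(u(t,\cdot;u_1,g),u(t,\cdot;u_2,g))$ is bounded and, by Proposition~\ref{part-metric-prop}(1), strictly increasing as $t\to-\infty$, so it converges to some $\rho_\infty>0$. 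Passing to a subsequential limit along $t_n\to-\infty$ produces two distinct orbits with constant part metric $\rho_\infty$, contradicting strict decrease. This backward trick uses only Proposition~\ref{part-metric-prop}(1) and compactness of $\omega$, and completely sidesteps the need for a quantitative $\rho\to 0$ estimate. Your alternative of proving a bounded-domain analogue of Proposition~\ref{part-metric-prop}(2)(ii) would also work, but is more labor.
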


The proposition follows from \cite[Theorem A]{MiSh2}. For completeness,
we provide a proof in the following.

\begin{proof}[Proof of Proposition \ref{fixed-boundary-prop1}]
(1) Let $b(t,x)=g(t,x,0)$ for any $g\in H(f)$. Since $\lambda(a,l)<0$,
it is well known that $\|v(t,x;\phi^l(b),b)\|\to 0$ as $t\to\infty$, where $v(t,x;\phi^l(b),b)$ is
the solution of \eqref{linearized-eq1-1} with $v(0,\cdot;\phi^l(b),b)=\phi^l(b)$ and
$\phi^l(b)$ is as in Lemma \ref{lyapunov-exponent-lm0}.
For any $u_{0}\in X^{+}(l)$, we can choose $M>0$ such that $u_{0}\leq M\phi^l(b)$
for $x\in(0,l)$. It follows from comparison principle for parabolic equations
that
$0\leq u(t,x,u_{0},g)\leq Mv(t,x,\phi^l(b),b)$ for $x\in[0,l]$.
This together with a priori estimates for parabolic equations implies that
 $\|u(t,\cdot,u_{0},g)\|\to 0$
as $t\to\infty$.

(2) Choose $\xi>0$ such that $\lambda(a,l)-\xi>0$. Let
$\bar{a}(t,x)=f(t,x,0)-\xi$
and
$\phi^l: H(\bar a)\to X^{++}(l)$ be as in Lemma \ref{lyapunov-exponent-lm0}.
For any $g\in H(f)$, we choose $\bar{b}\in H(\bar{a})$ such that
$\bar{b}(t,x)=g(t,x,0)-\xi$. Let
$v(t,x;\phi^l(\bar{b}),\bar{b})$ be the solution of
\begin{equation*}
\begin{cases}
\label{linearized-eq7}
v_t=v_{xx}+\bar{b}(t,x)v,\quad  0<x<l \cr v_x(t,0)=v(t,l)=0
\end{cases}
\end{equation*}
with $v(0,x;\phi^l(\bar b),\bar b)=\phi^l(\bar b)(x)$.

Since $\lambda(\bar a,l)=\lambda(a,l)-\xi>0$, we can find $T>0$, such that
$$\|v(T,\cdot;\phi^l(\bar{b}),\bar{b})\|\geq1 \, \,\,\forall\,\, \bar{b}\in H(\bar{a}).$$
Choose $0<\epsilon\ll1$ such that for any $g\in H(f)$,
$$ug(t,x,u)\geq(g(t,x,0)-\xi)u \quad {\rm for}\,\,
0\leq u\leq \sup_{0\leq t\leq T}\|v(t,\cdot;\epsilon\phi^l(\bar{b}),\bar{b})\|.$$
Using comparison principle for parabolic equations we obtain that
$v(t,\cdot;\epsilon\phi^l(\bar{b}),\bar{b})  (0\leq t\leq T)$ is a subsolution of
the problem \eqref{fixed-boundary-eq3}. We then have
$$
u(T,\cdot;\epsilon \phi^l(\bar b),g)\ge \epsilon \phi^l(\bar b_{T})
$$
and then
$$
u(nT,\cdot;\epsilon \phi^l(\bar b),g)\ge \epsilon \phi^l(\bar b_{nT}).
$$
Let  $\omega(\epsilon\phi^l(\bar a),f)$ be the $\omega$-limit set of $\Pi_t(\epsilon \phi^l(\bar a),f)$. We then have
$\omega(\epsilon \phi^l(\bar a),f)\subset X^{++}(l)\times H(f)$.

We claim that for any $g\in H(f)$,
there is unique $u^l(g)\in X^{++}(l)$ such that $(u^l(g),g)\in\omega(\epsilon \phi^l(\bar a),f)$.
In fact, if there is $g\in H(f)$ such that there are $u_1,u_2\in X^{++}(l)$ with
$(u_i,g)\in \omega(\epsilon \phi^l(\bar a),f)$ and $u_1\not = u_2$, then $(u(t,\cdot;u_i,g),g_t)\in \omega(\epsilon \phi^l(\bar a),f)$ for all
$t\in\RR$. By Proposition \ref{part-metric-prop}(1), there is $\rho_\infty>0$ such that
$\rho(u(t,\cdot;u_1,g),u(t,\cdot;u_2,g))\to \rho_\infty$ as $t\to -\infty$. For any $t_n\to -\infty$, without loss of
generality, assume that
$g_{t_n}\to g^*$ and $u(t_n\cdot;u_i,g)\to u_i^*$. Then
$$
u(t,\cdot;u_i^*,g^*)=\lim_{n\to\infty}u(t+t_n,\cdot;u_i,g)
$$
and
$$
\rho(u(t,\cdot;u_1^*,g^*),u(t,\cdot;u_2^*,g^*))=\lim_{n\to\infty}\rho(u(t+t_n,\cdot;u_1,g),u(t+t_n,\cdot;u_2,g))=\rho_\infty
$$
for all $t\in\RR$, which contradicts to Proposition \ref{part-metric-prop}(1). Therefore,
the claim holds and $u^l:H(f)\to X^{++}$ is continuous. In particular,
$u^{*,l}(t,x)=u(t,x;u^l(f),f)$ is an almost periodic solution.
Moreover, by the above arguments, for any $u_0\in X^{++}$,
$\omega(u_0,f)=\omega(\epsilon \phi^l(\bar a),f)$ and then
$$
\lim_{t\to\infty}\|u(t,\cdot;u_0,f)-u^{*,l}(t,\cdot)\|=0.
$$
\end{proof}

\subsection{Comparison principal for free boundary problems}

In order for later application, we need a comparison principle which can
be used to estimate both $u(t,x)$ and the free boundary $x=h(t)$.
\begin{proposition}
\label{comparison-principle} Suppose that $T\in (0,\infty)$, $\bar{h}
\in C^{1}([0,T])$, $\bar{u}\in C(\bar{D}^{*}_{T})\cap C ^{1,2}(D^{*}_{T})$
with $D^{*}_{T}=\{(t,x)\in\R^{2}:0<t\leq T, 0<x<\bar{h}(t)\}$, and
\begin{equation*}
\begin{cases}
\bar{u}_t\geq\bar{u}_{xx}+\bar{u}f(t,x,\bar{u}),\quad &t>0, 0<x<\bar{h}(t)
\cr \bar{h}^{'}(t)\geq-\mu\bar{u}_x(t,\bar{h}(t)),\quad &t>0, \cr
\bar{u}_x(t,0)\leq 0,u(t,\bar{h}(t))=0,\quad  &t>0.
\end{cases}
\end{equation*}
If $h_{0}\leq \bar{h}(0)$ and $u_{0}(x)\leq\bar{u}(0,x)$ in $[0,h_{0}]$,
then the solution $(u,h)$ of the free boundary problem \eqref{main-eq}
satisfies
$$h(t)\leq \bar{h}(t)  \ for\ all\ t\in(0,T],\ u(t,x)\leq\bar{u}(x,t) \ for \
t\in(0,T]\ and\ x\in(0,h(t)).$$

\begin{proof}
The proof of this Proposition is similar to that of Lemma 3.5 in
\cite{DuLi} and Lemma 2.6 in \cite{DuGu}.
\end{proof}

\begin{remark}
The pair $(\bar{u},\bar{h})$ in Proposition \ref{comparison-principle}
is called an upper solution of the free boundary problem. We can
define a lower solution by reversing all the inequalities in the
obvious places.

\end{remark}

\end{proposition}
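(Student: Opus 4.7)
My plan is to argue by contradiction, establishing the free-boundary inequality $h(t) \le \bar h(t)$ first and deducing $u \le \bar u$ afterward by a standard parabolic maximum principle. Let $t^* = \inf\{t \in (0,T] : h(t) > \bar h(t)\}$ and suppose $t^* < \infty$; by continuity $h(t^*) = \bar h(t^*) =: \xi$ and $h(t) \le \bar h(t)$ on $[0,t^*]$. On the parabolic domain $E = \{(t,x) : 0 < t \le t^*,\ 0 < x < h(t)\}$, which sits inside the domain of $\bar u$, the difference $w = \bar u - u$ satisfies a linear parabolic inequality
\[
w_t - w_{xx} - c(t,x)\, w \ge 0,
\]
with $c(t,x)$ obtained from the mean-value theorem applied to $\bar u f(t,x,\bar u) - u f(t,x,u)$. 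On the parabolic boundary of $E$: $w(0,\cdot) \ge 0$ by the initial hypothesis; $-w_x(t,0) = -\bar u_x(t,0) \ge 0$, i.e., the outward normal derivative is nonnegative at $x=0$; and $w(t,h(t)) = \bar u(t,h(t)) \ge 0$ since $\bar u \ge 0$ throughout its domain (comparison with the zero solution) and $h(t) \le \bar h(t)$. The weak maximum principle then gives $w \ge 0$ on $\overline E$.

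At the contact point $(t^*,\xi)$ one has $w(t^*,\xi) = 0$. In the generic case $w \not\equiv 0$ in $E$, the strong maximum principle and Hopf's boundary-point lemma yield $w_x(t^*,\xi) < 0$, so $\bar u_x(t^*,\bar h(t^*)) < u_x(t^*,h(t^*))$ and hence
\[
\bar h'(t^*) \ge -\mu\, \bar u_x(t^*,\bar h(t^*)) > -\mu\, u_x(t^*,h(t^*)) = h'(t^*).
\]
On the other hand, $\bar h - h \ge 0$ on $[0,t^*]$ with equality at $t^*$ forces the left derivative to satisfy $(\bar h - h)'(t^{*-}) \le 0$, i.e., $\bar h'(t^*) \le h'(t^*)$, which contradicts the inequality above.

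To avoid the degenerate situation $w \equiv 0$ and to secure the interior-sphere condition needed for Hopf's lemma when $\bar h$ is only $C^1$, I would first perturb the upper solution by a small parameter $\epsilon > 0$, e.g., using a rescaling $\bar h_\epsilon(t) = (1+\epsilon)\bar h(t) + \epsilon$ and $\bar u_\epsilon(t,x) = (1+\eta(\epsilon))\,\bar u\bigl(t, x/(1+\epsilon)\bigr)$ with $\eta(\epsilon) \to 0$, chosen so that $(\bar u_\epsilon, \bar h_\epsilon)$ remains an upper solution of the same system with $\bar h_\epsilon(0) > h_0$ and $\bar u_\epsilon(0,\cdot) > u_0$ on $[0,h_0]$ strictly. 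Running the contradiction argument for $(\bar u_\epsilon, \bar h_\epsilon)$ in place of $(\bar u, \bar h)$ yields $h(t) < \bar h_\epsilon(t)$ on $(0,T]$, and letting $\epsilon \to 0^+$ gives $h(t) \le \bar h(t)$. This is essentially the strategy used in \cite{DuLi} and \cite{DuGu}.

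Once $h \le \bar h$ on $(0,T]$ is established, the inequality $u(t,x) \le \bar u(t,x)$ on $\{(t,x) : 0 < t \le T,\ 0 < x < h(t)\}$ follows from the same analysis of $w = \bar u - u$ on this now fixed-in-advance moving domain: the parabolic boundary data for $w$ are nonnegative ($w(0,\cdot) \ge 0$, $-w_x(t,0) \ge 0$, $w(t,h(t)) = \bar u(t,h(t)) \ge 0$), and the linear parabolic inequality for $w$ delivers $w \ge 0$. The main technical obstacle is the Hopf step at the moving contact point and the construction of the perturbed upper solution $(\bar u_\epsilon, \bar h_\epsilon)$ that genuinely preserves the upper-solution inequalities; once that perturbation is set up, everything else is routine parabolic comparison.
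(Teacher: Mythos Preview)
Your proposal is correct and is essentially the argument of Lemma~3.5 in \cite{DuLi} (and Lemma~2.6 in \cite{DuGu}) that the paper itself simply cites without giving details: define a first contact time for the free boundaries, use the parabolic maximum principle on $w=\bar u-u$ over the smaller moving domain to get $w\ge 0$, apply Hopf's lemma at the contact point to contradict the Stefan conditions, and remove the strict-inequality hypothesis by an $\epsilon$-perturbation. One minor remark: in \cite{DuLi} the perturbation is carried out on the \emph{solution} side (shrinking $h_0$ and $u_0$, or equivalently decreasing $\mu$) rather than on the supersolution side as you suggest; perturbing $(\bar u,\bar h)$ as you wrote requires some care to verify that the rescaled pair still satisfies the differential inequality, whereas perturbing $(u,h)$ keeps one working with genuine solutions and makes continuous dependence immediate. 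Either variant works, and the rest of your outline matches the cited proofs.
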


\begin{proposition}
\label{comparison-principle1} Suppose that $T\in (0,\infty)$, $\bar{g}, \bar{h}
\in C^{1}([0,T])$, $\bar{u}\in C(\bar{D}^{*}_{T})\cap C ^{1,2}(D^{*}_{T})$
with $D^{*}_{T}=\{(t,x)\in\R^{2}:0<t\leq T, \bar{g}(t)<x<\bar{h}(t)\}$, and
\begin{equation*}
\begin{cases}
\bar{u}_t\geq\bar{u}_{xx}+\bar{u}f(t,x,\bar{u}),\quad &t>0, \bar{g}(t)<x<\bar{h}(t)
\cr \bar{u}(t, \bar{h}(t))=0, \bar{h}^{'}(t)\geq-\mu\bar{u}_x(t,\bar{h}(t)),\quad &t>0 \cr
\bar{u}(t, \bar{g}(t))=0, \bar{g}^{'}(t)\leq -\mu\bar{u}_x(t,\bar{g}(t)),\quad  &t>0.
\end{cases}
\end{equation*}
If $[g_0, h_0] \subset [\bar{g}(0), \bar{h}(0)]$ and $u_{0}(x)\leq\bar{u}(0,x)$ in $[g_0,h_0]$,
then the solution $(u,g,h)$ of the free boundary problem \eqref{main-doub-eq} satisfies
$$g(t)\ge \bar{g}(t), h(t)\leq \bar{h}(t)  \ for\ all\ t\in(0,T],\ u(t,x)\leq\bar{u}(x,t) \ for \
t\in(0,T]\ and\ x\in(g(t),h(t)).$$

\begin{proof}
The proof of this Proposition only requires some obvious modifications as in
Proposition \ref{comparison-principle}.
\end{proof}

\end{proposition}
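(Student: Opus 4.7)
The plan is to adapt the moving-boundary comparison argument behind Proposition \ref{comparison-principle} to the two-sided setting. First I would reduce to the case of strict initial containment $[g_0,h_0]\subset(\bar g(0),\bar h(0))$ by an $\epsilon$-perturbation: replace $(\bar u,\bar g,\bar h)$ with a slightly widened triple $(\bar u_\epsilon,\bar g_\epsilon,\bar h_\epsilon)$ (e.g., via an affine rescaling in $x$ that extends the spatial support by $\epsilon$ on each side, together with a small multiplicative inflation of $\bar u$). A routine computation, identical in spirit to the one-sided perturbation in \cite{DuLi, DuGu}, shows that the perturbed triple remains an upper solution for $\epsilon$ small; one then takes $\epsilon\to 0$ at the end.

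Having reduced to strict containment, I would define
$$
t^\ast=\sup\{t\in(0,T]\,:\, g(s)>\bar g(s)\ \text{and}\ h(s)<\bar h(s)\ \text{for all}\ s\in(0,t]\}.
$$
By continuity $t^\ast>0$. Suppose for contradiction $t^\ast<T$; then at $t^\ast$ at least one of $g(t^\ast)=\bar g(t^\ast)$ or $h(t^\ast)=\bar h(t^\ast)$ holds, say the latter (the other case is symmetric). On the domain $D^\ast=\{(t,x):0<t\le t^\ast,\ g(t)<x<h(t)\}\subset\{(t,x):0<t\le t^\ast,\ \bar g(t)<x<\bar h(t)\}$, set $w=\bar u-u$. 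By (H1) and the mean-value theorem, $w$ satisfies a linear parabolic inequality $w_t-w_{xx}-c(t,x)w\ge 0$ with $c$ bounded. At $t=0$, $w\ge 0$ by hypothesis; at $x=g(t)$ or $x=h(t)$ for $0<t<t^\ast$, one has $u=0$ while $\bar u>0$ (these are interior points of the strictly larger interval $(\bar g(t),\bar h(t))$, where $\bar u$ cannot vanish by the strong maximum principle applied to the equation for $\bar u$). Hence $w\ge 0$ on the parabolic boundary of $D^\ast$, so $w\ge 0$ on $D^\ast$ and, by the strong maximum principle, $w>0$ in its interior.

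Now I would apply Hopf's boundary point lemma at $(t^\ast,h(t^\ast))=(t^\ast,\bar h(t^\ast))$, where $w=0$: this yields $w_x(t^\ast,h(t^\ast))<0$, that is, $\bar u_x(t^\ast,\bar h(t^\ast))<u_x(t^\ast,h(t^\ast))$. Combining with the boundary-kinematic inequalities,
$$
\bar h'(t^\ast)\ \ge\ -\mu\,\bar u_x(t^\ast,\bar h(t^\ast))\ >\ -\mu\, u_x(t^\ast,h(t^\ast))\ =\ h'(t^\ast).
$$
On the other hand, since $h(t)\le\bar h(t)$ on $[0,t^\ast]$ with equality at $t=t^\ast$, one must have $h'(t^\ast)\ge\bar h'(t^\ast)$, a contradiction. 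The case $g(t^\ast)=\bar g(t^\ast)$ is handled identically using Hopf at the left endpoint and the inequality $\bar g'\le -\mu\bar u_x$. Hence $t^\ast=T$, and the bound $u\le\bar u$ in the overlap region is already obtained as a by-product of the comparison on $D^\ast$.

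The main obstacle is the perturbation in the reduction step: producing a widened triple $(\bar u_\epsilon,\bar g_\epsilon,\bar h_\epsilon)$ that remains an upper solution, so that both inequalities $\bar h'_\epsilon\ge -\mu(\bar u_\epsilon)_x\!\mid_{\bar h_\epsilon}$ and $\bar g'_\epsilon\le -\mu(\bar u_\epsilon)_x\!\mid_{\bar g_\epsilon}$ survive simultaneously. This is standard bookkeeping, but requires more care than in the one-sided case of Proposition \ref{comparison-principle} because one must simultaneously maintain two boundary inequalities of opposite sign; the one-sided template in \cite{DuLi, DuGu} transfers without essential difficulty once the rescaling is chosen symmetrically.
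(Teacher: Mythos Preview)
Your proposal is correct and follows exactly the standard route the paper has in mind: the paper's own proof is a single sentence deferring to ``obvious modifications'' of Proposition~\ref{comparison-principle}, which in turn cites Lemma~3.5 of \cite{DuLi} and Lemma~2.6 of \cite{DuGu}; the first-touching-time argument with the strong maximum principle and Hopf lemma that you outline is precisely that argument, carried out symmetrically at both fronts. One small remark on the reduction step: in \cite{DuLi} the perturbation is typically done on the \emph{solution} side (replacing $\mu$ by $\mu-\epsilon$ and using continuous dependence to obtain strict initial containment) rather than on the supersolution $(\bar u,\bar g,\bar h)$; this avoids the bookkeeping you flag about simultaneously preserving both boundary inequalities, though your approach also works.
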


\begin{proposition}
\label{global-existence} For any given $h_0>0$ and $u_0\ge
0$, $(u(t,x;u_0,h_0), h(t;u_0,h_0))$ exists for all $t\ge 0$.
\end{proposition}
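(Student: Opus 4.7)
The plan is to combine the local existence result from \cite{DuGuPe} with two a priori bounds: one on $\sup u$, and one on the growth rate of the free boundary $h(t)$. Together these rule out finite-time blow-up and allow a standard continuation argument to produce a global solution.

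First I would invoke the local existence theorem of \cite{DuGuPe} to obtain a maximal interval $[0, T_{\max})$ on which $(u(t,x;u_0,h_0), h(t;u_0,h_0))$ exists and is classical. The goal is to show $T_{\max} = \infty$; otherwise we derive a contradiction by constructing a continuation past $T_{\max}$.

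Next I would bound $u$ from above. By (H1), there exists $M>0$ such that $f(t,x,u)<0$ for $u\ge M$ uniformly in $(t,x)$. Set $\bar M=\max\{M,\|u_0\|_{C([0,h_0])}\}$. Then the constant function $\bar u(t,x)\equiv \bar M$ on any domain $[0,T]\times[0,\bar h(t)]$ satisfies $\bar u_t=0\ge \bar u_{xx}+\bar u f(t,x,\bar u)$, together with $\bar u_x(t,0)=0$ and $\bar u(t,\bar h(t))>0$, so it is an upper solution in the sense of Proposition \ref{comparison-principle} (once we take $\bar h$ larger than $h$). The comparison principle then gives $u(t,x;u_0,h_0)\le \bar M$ on $[0,T_{\max})\times[0,h(t))$.

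The main obstacle is controlling the growth of $h(t)$ so that it stays finite on bounded time intervals. I would follow the barrier construction of Du--Lin and Du--Guo: pick a large constant $K>0$ (to be determined), define
\begin{equation*}
\bar h(t)=h_0(1+Kt),\qquad \bar u(t,x)=\bar M\, V\bigl(x/\bar h(t)\bigr),
\end{equation*}
where $V\in C^2([0,1])$ is a fixed smooth profile with $V(1)=0$, $V'(0)=0$, $V(y)>0$ for $y\in[0,1)$, and $V'(1)<0$, chosen so that $u_0(x)\le \bar u(0,x)$ on $[0,h_0]$. A direct calculation, using the uniform bound on $f$ on $\{|u|\le \bar M\}$ from (H1), shows that for $K$ sufficiently large (depending only on $h_0$, $\bar M$, $\mu$, $\|V\|_{C^2}$ and $\sup|f|$) the pair $(\bar u,\bar h)$ is an upper solution in the sense of Proposition \ref{comparison-principle}; in particular $\bar h'(t)=h_0K\ge -\mu \bar u_x(t,\bar h(t))=\mu \bar M |V'(1)|/\bar h(t)$ holds because the left side is bounded below while the right side is bounded above. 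Proposition \ref{comparison-principle} then yields $h(t)\le \bar h(t)=h_0(1+Kt)$ on $[0,T_{\max})$.

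Finally I would combine the two bounds. On any compact subinterval $[0,T]\subset[0,T_{\max})$ we have $u$ uniformly bounded and $h$ uniformly bounded, so by parabolic Schauder estimates (applied after straightening the moving boundary via the change of variables $y=x/h(t)$, as in \cite{DuGuPe,DuLi}), $u$ and $h$ enjoy uniform $C^{1+\alpha/2,2+\alpha}$ and $C^{1+\alpha/2}$ bounds up to $t=T_{\max}^-$. This provides continuous limit data at $t=T_{\max}$ from which the local existence theorem can be restarted, contradicting the maximality of $T_{\max}$ unless $T_{\max}=\infty$. The routine step is the restart; the delicate step is the choice of $K$ and $V$ in the barrier, for which I would simply import the Du--Lin construction verbatim since $f$ enters the verification only through $\sup\{|uf(t,x,u)|:0\le u\le\bar M,(t,x)\in\mathbb R^2\}$, which is finite by (H1).
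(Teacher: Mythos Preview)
Your overall strategy---local existence, a priori bounds on $u$ and on $h$, then continuation---is exactly the route the paper intends: its proof is a one-line reference to Theorem~4.3 of \cite{DuGu}, which in turn rests on the Du--Lin machinery. The $L^\infty$ bound $u\le\bar M$ and the final continuation step are both fine.

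The gap is in your barrier for $h(t)$. The self-similar pair $\bar h(t)=h_0(1+Kt)$, $\bar u(t,x)=\bar M\,V(x/\bar h(t))$ is \emph{not} a supersolution of the PDE for all $t\ge 0$, no matter how large $K$ is taken. The problem is that $\bar u$ sweeps through every value in $[0,\bar M]$, and at heights where $f>0$ the reaction term $\bar u f(t,x,\bar u)$ must be dominated by $\bar u_t=-\bar M V'(y)\,y\,\bar h'(t)/\bar h(t)$. But $\bar h'(t)/\bar h(t)=K/(1+Kt)\le 1/t$ for every $K>0$, so at any interior $y$ with $0<\bar M V(y)<M$ (where $f$ may equal its positive supremum $F_0$) the supersolution inequality reduces, after sending $K\to\infty$, to $-V'(y)y/t\ge V(y)F_0$, which fails once $t$ exceeds a fixed threshold determined by $V$ and $F_0$ alone. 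Hence the barrier cannot cover an arbitrary interval $[0,T_{\max})$, and your claim that $K$ can be chosen depending only on $h_0,\bar M,\mu,\|V\|_{C^2},\sup|f|$ is false.

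What Du--Lin actually do (their Lemma~2.2, which is what \cite{DuGu} imports) is different: they bound $h'(t)$ directly via a \emph{local} barrier near the moving boundary. One sets
\[
w(t,x)=\bar M\bigl[2N(h(t)-x)-N^2(h(t)-x)^2\bigr]\quad\text{on }\{h(t)-N^{-1}<x<h(t)\},
\]
verifies that for $N$ large (depending only on $\bar M$ and $\sup|f|$) this $w$ is a supersolution in the strip with $w\ge u$ on its parabolic boundary, and deduces $-u_x(t,h(t))\le -w_x(t,h(t))=2N\bar M$, hence $h'(t)\le 2\mu N\bar M$ uniformly in $t$. This yields $h(t)\le h_0+Ct$ on the entire existence interval, after which your continuation argument goes through unchanged. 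Replace your global barrier by this local one and the proof is complete.
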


\begin{proof}
The proof is similar to that of Theorem 4.3 in \cite{DuGu}.
\end{proof}

\begin{remark}
From the uniqueness of the solution to \eqref{main-eq} and
some standard compactness argument, we can obtain that the
unique solution $(u,h)$ depends continuously on $u_{0}$ and
the parameters appeared in \eqref{main-eq}.
\end{remark}

\section{Asymptotic Dynamics of Diffusive KPP Equations on
 Fixed Unbounded Domain and Proof of Theorem \ref{main-thm1}}

In this section, we consider the asymptotic dynamics of \eqref{aux-main-eq1} and
prove Theorem \ref{main-thm1}. Throughout this section, we assume that  $f$ satisfies (H1)-(H5).
We let $\tilde X$ be as in \eqref{tidle-x-space} and
$u(t,\cdot;u_0,g)$ be the solution of \eqref{unbounded-eq3} with $u(0,\cdot;u_0,g)=u_0(\cdot)\in\tilde X$.
The main results of this section are stated in the following proposition.

\begin{proposition}
\label{unbounded-prop1}
Assume (H1)-(H5). There is $u^*:H(f)\to \tilde X^{++}$ satisfying the following properties.
\begin{itemize}
\item[(1)] (Almost periodicity in time)  $u^*(g)(x)$ is continuous in $g\in H(f)$ in open compact topology with respect to $x$
(that is, if $g_n\to g$ in $H(f)$, then $u^*(g_n)(x)\to u^*(g)(x)$ locally uniformly in $x$)
and  $u(t,x;u^*(g),g)=u^*(g\cdot t)(x)$ (hence $u^*(g\cdot t)(x)$ is an almost periodic solution of \eqref{unbounded-eq3}).

\item[(2)] (Stability) For any $u_0\in\tilde X^{++}$,
$$
\|u(t,\cdot;u_0,g)-u^*(g(\cdot+t,\cdot,\cdot))(\cdot)\|_{\tilde X}\to 0
$$
as $t\to\infty$ uniformly in $g\in H(f)$.

\item[(3)] (Uniqueness) For given $g\in H(f)$, if $\tilde u^{*}(t,x)$ is an entire positive solution of \eqref{unbounded-eq3}, and $\inf_{t\in\RR,x\in\RR^+}\tilde u^{*}(t,x)>0$, then $\tilde u^{*}(t,x)=u(t,x;u^*(g),g)$.

 \item[(4)] (Spatial homogeneity) If $f(t,x,u)\equiv f(t,u)$, then $u^*(g)(x)$ is independent of $x$ and
 $V^*(t;g)=u^*(g\cdot t)$ is the unique time almost periodic solution of
 \begin{equation}
 \label{ode-eq2}
 u_t=ug(t,u).
 \end{equation}
\end{itemize}
\end{proposition}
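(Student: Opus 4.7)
My plan is to construct $u^*(g)$ as a monotone limit of the bounded-domain time almost periodic solutions produced by Proposition \ref{fixed-boundary-prop1}, and then to deduce stability, uniqueness and spatial homogeneity from the part-metric contraction of Proposition \ref{part-metric-prop}. All four assertions are organized around this single construction.

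\textbf{Construction of $u^*(g)$.} Fix $g\in H(f)$. For each $l>l^*$, (H3) together with Lemma \ref{lyapunov-exponent-lm1} gives $\lambda(g(\cdot,\cdot,0),l)>0$, so Proposition \ref{fixed-boundary-prop1}(2) provides a positive time almost periodic solution $u^{*,l}(g)\in X^{++}(l)$ of \eqref{fixed-boundary-eq3}. Pick $M>0$ with $\sup_{t,x}f(t,x,M)<0$; then the constant $M$ is a supersolution, so $u^{*,l}(g)\le M$. For $l_1<l_2$, comparison (start both flows from the common supersolution $M$; the Dirichlet zero at $l_1$ is more restrictive than at $l_2$) yields $u^{*,l_1}(g)(x)\le u^{*,l_2}(g)(x)$ on $[0,l_1]$. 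Define $u^*(g)(x):=\lim_{l\to\infty}u^{*,l}(g)(x)$. Parabolic a priori estimates promote this to $C^{1,2}_{\text{loc}}$ convergence, so $u^*(g\cdot t)(x):=u(t,x;u^*(g),g)$ is a classical entire solution of \eqref{unbounded-eq3} with $0<u^*(g)(x)\le M$, and $u(t,\cdot;u^*(g),g)=u^*(g\cdot t)$ holds by construction.

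\textbf{Uniform positive lower bound -- main obstacle.} The delicate step is to show $\inf_{x\ge 0}u^*(g)(x)>0$ with a bound independent of $g\in H(f)$, so that $u^*(g)\in\tilde X^{++}$. For each $y\ge y^*$, (H4) and the Dirichlet analog of Proposition \ref{fixed-boundary-prop1}(2) (justified by Remark \ref{Lyapunov-exp-rk}) produce a positive time almost periodic Dirichlet solution $\tilde u^{*,y}$ on $[y,y+L^*]$; comparison on $[y,y+L^*]$ (using that $u^*$ has positive values at the boundary while $\tilde u^{*,y}$ vanishes there) gives $u^*(g\cdot t)(x)\ge \tilde u^{*,y}(t,x)$ on that interval, whence $u^*(g)>0$ pointwise on every such interval. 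To upgrade pointwise to uniform positivity, suppose $x_n\to\infty$ with $u^*(g)(x_n)\to 0$. By (H5) a subsequence of the double shifts $f(\cdot,\cdot+x_n,\cdot)$ converges uniformly on compacta to some $f^\infty$; translating $u^*$ and $\tilde u^{*,x_n-L^*/2}$ by $x_n$ and using parabolic compactness yields limits solving, respectively, the unbounded and Dirichlet problems with nonlinearity $f^\infty$. Continuity of the Dirichlet Lyapunov exponent (Lemma \ref{lyapunov-exponent-lm2} with Remark \ref{Lyapunov-exp-rk}), combined with (H4) on arbitrarily large $l\ge L^*$ and the monotonicity of $\tilde\lambda$ in $l$, keeps the limiting Dirichlet exponent strictly positive, so the limit of $\tilde u^{*,x_n-L^*/2}$ is a nontrivial positive solution, contradicting the collapse at $x_n$ via the strong maximum principle. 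This compactness-plus-continuity argument is the main technical work.

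\textbf{Remaining properties.} Joint continuity of $u^{*,l}$ in $g$ passes to the monotone limit, giving continuity of $g\mapsto u^*(g)$ in the open-compact topology and hence almost periodicity of $t\mapsto u^*(g\cdot t)(x)$, which is (1). For (2), any $u_0\in\tilde X^{++}$ satisfies $\epsilon\le u_0\le M$, and by comparison with constant sub- and supersolutions both $u(t,\cdot;u_0,g)$ and $u^*(g\cdot t)$ take values in a common interval $[\epsilon_1,M_1]$ uniformly in $t,g,x$ after a bounded initial delay. Proposition \ref{part-metric-prop}(2)(i) makes $\rho(u(t,\cdot;u_0,g),u^*(g\cdot t))$ nonincreasing, and iterating the quantitative estimate (2)(ii) on intervals of length $\tau$ forces this part metric to $0$ uniformly in $g$; since $\rho\to 0$ on a uniformly bounded set implies uniform convergence, the conclusion follows. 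Claim (3) is proved identically, applied to any two entire solutions with positive infimum. For (4), when $f=f(t,u)$, (H3) together with Lemma \ref{lyapunov-exponent-lm3} gives $\lim_{t\to\infty}\tfrac{1}{t}\int_0^t f(s,0)\,ds>0$, so \eqref{ode-eq2} admits a unique time almost periodic positive solution $V^*(t;g)$; viewed as $x$-independent it is an entire solution of \eqref{unbounded-eq3} bounded away from $0$, so uniqueness (3) forces $u^*(g\cdot t)\equiv V^*(t;g)$.
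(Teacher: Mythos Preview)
Your overall strategy is sound and the part-metric arguments for (2), (3), (4) match the paper's. The construction of $u^*(g)$, however, differs: the paper builds it as the \emph{decreasing} pullback limit $u^*(g)(x)=\lim_{t\to\infty}u(t,x;M,g\cdot(-t))$ from the constant supersolution $M$ (Lemma~\ref{unbounded-domain-lm3}), rather than as the \emph{increasing} limit of the bounded-domain Neumann--Dirichlet solutions $u^{*,l}(g)$. Both limits are legitimate and in fact coincide---the paper verifies this equality only later, in the proof of Theorem~\ref{main-thm2}(1)(ii), after Proposition~\ref{unbounded-prop1} is already in hand. Your uniform-lower-bound argument via (H5) and compactness is essentially the content of the paper's Lemma~\ref{unbounded-domain-lm2}, which handles the tail $x\gg 1$; the initial segment $[0,y^*+L/4]$ is covered (in both approaches) by the compactness of $H(f)$ together with continuity of $u^{l}(g)$ in $g$ from Proposition~\ref{fixed-boundary-prop1}.

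There is, however, a genuine gap in your argument for (1). The claim that ``joint continuity of $u^{*,l}$ in $g$ passes to the monotone limit'' is not valid: an increasing pointwise limit of continuous functions is in general only lower semicontinuous, and Dini's theorem cannot be invoked without already knowing that the limit is continuous. The paper proves continuity of $g\mapsto u^*(g)$ by a different route: given $g_n\to g^*$, parabolic a priori estimates yield a subsequence with $u^*(g_{n_k})\to u^{**}$ locally uniformly; then $u(t,\cdot;u^{**},g^*)$ is an entire solution with $\inf u^{**}>0$ (thanks to the uniform lower bound), and the quantitative part-metric contraction of Proposition~\ref{part-metric-prop}(2)(ii), run \emph{backward} in time, forces $u^{**}=u^*(g^*)$ (otherwise $\rho(u(-n\tau,\cdot;u^{**},g^*),u^*(g^*\cdot(-n\tau)))$ would grow without bound, contradicting the uniform bounds $[\epsilon_1,M_1]$). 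This is exactly your uniqueness mechanism (3). So the fix is to reorder your argument: establish the uniform lower bound and uniqueness (3) first, then deduce continuity in $g$---and hence almost periodicity in $t$---via compactness plus uniqueness, as the paper does.
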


\begin{proof} [Proof of  Theorem \ref{main-thm1}]
Let $u^*(t,x)=u^*(f \cdot t)(x)$, where $u^*(f \cdot t)$ is as in
 Proposition \ref{unbounded-prop1}. Theorem \ref{main-thm1} then follows.
 \end{proof}

 We  remark that
 the existence and uniqueness of positive solutions which are bounded away from $0$
of KPP equations in heterogeneous unbounded domains have been studied in \cite{BeHaNa} (see \cite[Propositions 1.7, 1.8]{BeHaNa}).
The almost periodicity and stability results in Proposition \ref{unbounded-prop1} are new.

To prove Proposition \ref{unbounded-prop1}, we first prove two lemmas.

For any $L\ge L^*$ and $y\ge y^*$, consider
\begin{equation}
\label{unbounded-domain-eq1}
\begin{cases}
u_t=u_{xx}+u g^y(t,x,u),\quad 0<x<L\cr
u(t,0)=u(t,L)=0,
\end{cases}
\end{equation}
where $ g^y(t,x,u)=g(t,x+y,u)$ for $0\le x\le L$.
By (H4),
$\tilde \lambda(g^y(\cdot,\cdot,0),L)>0$ for $y\ge y^*$. Then by the arguments of Proposition \ref{fixed-boundary-prop1},
\eqref{unbounded-domain-eq1} has a unique time almost periodic
 positive solution $u^*(t,x;g,y,L)$.
Note that
$$
u^*(t,x;g,y,L)=u^*(0,x;g\cdot t,y,L).
$$

\begin{lemma}
\label{unbounded-domain-lm2}
Assume (H1)-(H5). Fix a $L\ge L^*$. Then
\begin{equation}
\label{bounded-away-from-zero-eq1}
\inf_{y\ge y^*, L/4\le x\le 3L/4, g\in H(f)}u^*(0,x;g,y,L)>0.
\end{equation}

\end{lemma}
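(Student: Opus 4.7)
The plan is a contradiction argument combined with the strong maximum principle and a uniform subsolution constructed from principal Lyapunov functions. Suppose the conclusion fails: there exist sequences $y_n\ge y^*$, $g_n\in H(f)$, and $x_n\in[L/4,3L/4]$ with $u^*(0,x_n;g_n,y_n,L)\to 0$. Set $v_n(t,x)=u^*(t,x;g_n,y_n,L)$; by (H1) the $v_n$ are uniformly bounded, and parabolic Schauder estimates make $\{v_n\}$ precompact in $C^{1,2}_{\mathrm{loc}}(\RR\times[0,L])$. Using (H5), pass to subsequences so that $g_n(\cdot,\cdot+y_n,\cdot)\to\tilde g$ uniformly on $\RR\times$ bounded sets, $v_n\to v_\infty$ in $C^{1,2}_{\mathrm{loc}}$, and $x_n\to x_*\in[L/4,3L/4]$. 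The limit $v_\infty\ge 0$ is a bounded entire solution of
\begin{equation*}
v_t=v_{xx}+v\tilde g(t,x,v),\quad 0<x<L,\quad v(t,0)=v(t,L)=0,
\end{equation*}
with $v_\infty(0,x_*)=0$ at the interior point. The strong maximum principle applied backward and then forward in time forces $v_\infty\equiv 0$ on $\RR\times[0,L]$, so $v_n\to 0$ in $C^{1,2}_{\mathrm{loc}}$.

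To produce the contradiction I would adapt the iterative subsolution method from the proof of Proposition~\ref{fixed-boundary-prop1}(2). By (H4) and invariance of the principal Lyapunov exponent under time shifts within the hull (the Dirichlet analogue of Lemma~\ref{lyapunov-exponent-lm0}(iii)), one has $\tilde\lambda(g_n(\cdot,\cdot+y_n,0),L)=\tilde\lambda(a(\cdot,\cdot+y_n),L)>0$ for every $n$. The central step is to extract a uniform lower bound
\begin{equation*}
\delta:=\inf\bigl\{\tilde\lambda(g(\cdot,\cdot+y,0),L):y\ge y^*,\,g\in H(f)\bigr\}>0.
\end{equation*}
If no such $\delta$ existed, a subsequence of the exponents would tend to $0$; by (H5) and continuity of $\tilde\lambda$ in its coefficient (Dirichlet analogue of Lemma~\ref{lyapunov-exponent-lm2}), the corresponding coefficients would converge to some $\tilde a$ with $\tilde\lambda(\tilde a,L)=0$. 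When $\{y_n\}$ has a bounded subsequence the limit is $a(\cdot,\cdot+y_*)$ for some $y_*\ge y^*$, contradicting (H4) directly. When $y_n\to\infty$, a diagonal extraction combined with (H5) realizes $\tilde a$ as a uniform limit of translates $a(\cdot+\tau_m,\cdot+\eta_m)$ with $\eta_m\ge y^*$, and (H4) together with continuity of $\tilde\lambda$ still forces $\tilde\lambda(\tilde a,L)>0$, a contradiction.

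With $\delta>0$ in hand, fix $\xi\in(0,\delta)$ and let $\phi^L_n$ denote the Dirichlet principal Lyapunov function of the linear equation with coefficient $g_n(\cdot,\cdot+y_n,0)-\xi$, normalized to unit sup-norm. Continuity of the selection (Lemma~\ref{lyapunov-exponent-lm0}(i)) applied on the compact family furnished by (H5), together with the Hopf lemma, yields a uniform lower bound $\phi^L_n\ge c_0>0$ on $[L/4,3L/4]$. Choosing $T>0$ and $\epsilon_0>0$ independent of $n$ exactly as in the proof of Proposition~\ref{fixed-boundary-prop1}(2), iteration gives $v_n(kT,\cdot)\ge\epsilon_0\phi^L_n$ at time $kT$ for every $k\ge 0$, whence $v_n\ge\epsilon_0 c_0$ on $[L/4,3L/4]$ uniformly in $n$, contradicting $v_n\to 0$ in $C^{1,2}_{\mathrm{loc}}$. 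The main obstacle is precisely the uniformity step for $\tilde\lambda$ over $\{y\ge y^*,g\in H(f)\}$: carefully combining (H4) with the sequential compactness of spatial translates provided by (H5) so that the limiting coefficient still falls in the regime where (H4) yields strict positivity is the technical heart of the proof; once this uniformity is established, the rest is a direct iteration.
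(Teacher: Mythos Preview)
Your argument is workable but takes a longer path than the paper's. Both proofs start with the same contradiction setup and use (H5) to extract a subsequence along which $g_n(\cdot,\cdot+y_n,\cdot)\to g^*$ uniformly on $\RR\times$(bounded sets). From that point the paper is much more direct: it notes that $a^*:=g^*(\cdot,\cdot,0)$ is almost periodic with $\tilde\lambda(a^*,L)>0$, fixes a small $\epsilon>0$, and invokes the Dirichlet analogue of Proposition~\ref{fixed-boundary-prop1}(2) to obtain the unique positive almost periodic solution $\tilde u^{L}$ of
\[
u_t=u_{xx}+u\bigl(g^*(t,x,u)-\epsilon\bigr),\qquad u(t,0)=u(t,L)=0.
\]
Since $g_n^{y_n}\ge g^*-\epsilon$ for all large $n$, the comparison principle gives $u^*(t,x;g_n,y_n,L)\ge\tilde u^{L}(t,x)$ directly, and $\inf_{t\in\RR,\,L/4\le x\le 3L/4}\tilde u^{L}(t,x)>0$ immediately contradicts $u^*(0,x_n;g_n,y_n,L)\to0$.

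Your route instead (i) passes through the strong maximum principle to show $v_\infty\equiv0$ and hence $v_n\to0$, (ii) seeks a uniform lower bound $\delta>0$ for $\tilde\lambda$ over the entire family $\{(y,g):y\ge y^*,\,g\in H(f)\}$, and (iii) runs a Lyapunov-eigenfunction iteration uniformly in $n$. None of these is needed: the single almost periodic solution $\tilde u^{L}$ of the $\epsilon$-perturbed limit problem already serves as a subsolution for all large $n$ at once. In particular the paper never requires the uniform bound on $\tilde\lambda$; it only uses positivity at the one limiting coefficient $a^*$. The point you flag as the ``main obstacle'' --- that strict positivity of $\tilde\lambda$ survives in the limit when $y_n\to\infty$ --- is indeed the delicate step and is shared by both arguments (the paper's ``hence $\tilde\lambda(a^*,L)>0$'' is exactly this assertion); what the paper gains is that everything downstream of it is a one-line comparison rather than an iteration with uniform constants.
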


\begin{proof}
Assume that \eqref{bounded-away-from-zero-eq1} does not hold. Then there are $y_n\ge y^*$, $g_n\in H(f)$, and $x_n\in [L/4,3L/4]$ such that
$$
\lim_{n\to\infty} u^*(0,x_n;g_n,y_n,L)=0.
$$
By (H5), without loss of generality, we may assume that
$$
g_n^{y_n}(t,x,u)\to g^*(t,x,u)
$$
uniformly in $t\in\RR$ and $(x,u)$ in bounded sets.

Let $a_n(t,x)=g_n^{y_n}(t,x,0)$ and $a^*(t,x)=g^*(t,x,0)$. By (H2), $g^*$ is almost periodic in $t$.
Then
$$
\tilde \lambda(a_n,L)\to\tilde  \lambda(a^*,L)
$$
and hence $\tilde \lambda(a^*,L)>0$. Note that  for any $\epsilon>0$,
$$
g_n^{y_n}(t,x,u)\ge g^*(t,x,u)-\epsilon\quad \forall\,\, n\gg 1,\,\, 0\le x\le L.
$$
and
\begin{equation*}
\label{unbounded-domain-eq2}
\begin{cases}
u_t=u_{xx}+u\big( g^*(t,x,u)-\epsilon\big),\quad 0<x<L\cr
u(t,0)=u(t,L)=0
\end{cases}
\end{equation*}
has a unique positive almost periodic solution $\tilde u^{L}(t,x)$ with
$\inf_{ L/4\le x\le  3L/4,t\in\RR} \tilde u^{L}(t,x)>0$.  By comparison principle for parabolic equations,
we have
$$
u^*(t,x;g_n,y_n,L)\ge \tilde u^{L}(t,x)\quad \forall\,\, n\gg 1.
$$
This implies that
$$
u^*(0,x_n;g_n,y_n,L)\not \to 0
$$
as $n\to\infty$, which is a contradiction. Hence
$$\inf_{y\ge y^*, L/4\le x\le 3L/4, g\in H(f)}u^*(0,x;g,y,L)>0.$$
\end{proof}

\begin{lemma}
\label{unbounded-domain-lm3}
Assume (H1)-(H5). Let $u_0\equiv M(\gg 1)$. Then
$u(t,\cdot;u_0,g\cdot(-t))$ decreases as $t$ increases. Let $u^*(g)(x)=\lim_{t\to\infty} u(t,x;u_0,g\cdot(-t))$ for
$x\in  [0,\infty)$.
Then $u(t,\cdot;u^*(g),g)=u^*(g\cdot t)(\cdot)$ and $\inf_{x\in\RR^+,g\in H(f)}u^*(g)(x)>0$.
\end{lemma}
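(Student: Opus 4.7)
The plan is to break the lemma into three parts: (a) monotonicity of $v(t,x) := u(t,x;M,g\cdot(-t))$ in $t$, (b) passage to the limit to obtain an entire solution satisfying the invariance, and (c) the uniform positive lower bound.

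For (a), I would pick $M$ large enough that $\sup_{t,x} f(t,x,M) < 0$, which is possible by (H1); then the constant $u \equiv M$ is a supersolution of \eqref{unbounded-eq3} for every $h \in H(f)$ (the Neumann condition at $x=0$ is automatic). Viewing $v(t,x)$ as the value at time $0$ of the solution on $[-t,0]$ starting from $M$ at $\tau = -t$, I compare the trajectories launched at $\tau = -t$ and at $\tau = -s$ for $s < t$: at time $-s$ the first one lies below $M$, which is the datum of the second, so parabolic comparison on $[-s,0]$ yields $v(t,\cdot) \le v(s,\cdot)$.

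For (b), the pointwise limit $u^*(g)(x)$ exists by monotonicity and boundedness, and standard parabolic Schauder estimates on $v(t,\cdot)$ upgrade the convergence to $C^2_{\mathrm{loc}}$, placing $u^*(g)$ in $\tilde X$. The invariance $u(t,\cdot;u^*(g),g) = u^*(g\cdot t)$ then follows from the cocycle relation $u(t+s,\cdot;M,g\cdot(-s)) = u(t,\cdot;u(s,\cdot;M,g\cdot(-s)),g)$, continuous dependence on initial data, and reindexing by $r = s+t$ (using $g\cdot(-s) = (g\cdot t)\cdot(-r)$).

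Step (c) is the main obstacle. The strategy is to bound $v$ from below by the almost periodic attractors of suitable sub-domain problems. Fix $L \ge L^*$. For $y \ge 0$, parabolic comparison on $(y,y+L)$ between $v(\tau,\cdot)$ and the Dirichlet solution $u^{Dir}_{y,L}(\tau,\cdot;M,g\cdot(-t))$ gives $v(t,x) \ge u^{Dir}_{y,L}(t,x;M,g\cdot(-t))$ on $[y,y+L]$. When $y \ge y^*$, (H4) together with the arguments of Proposition \ref{fixed-boundary-prop1} applied to the Dirichlet problem shows that the right-hand side converges as $t \to \infty$ to $u^*(0,x-y;g,y,L)$; Lemma \ref{unbounded-domain-lm2} then yields a constant $c_1 > 0$ with $u^*(g)(x) \ge c_1$ for every $x \ge y^*+L/4$ and every $g \in H(f)$. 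The difficulty is that (H4) is assumed only for $y \ge y^*$, so pure Dirichlet windows cannot reach the range $[0, y^*+L/4)$; to close the gap I would replay the same comparison scheme using instead the Neumann--Dirichlet problem on $(0,L_0)$ with $L_0 > \max\{l^*,\, y^* + L/4\}$. This problem is governed by (H3) rather than (H4), and adapting the hull-compactness argument of Lemma \ref{unbounded-domain-lm2} to the Neumann--Dirichlet setting -- with uniformity in $g$ coming from (H5) and continuity of the principal Lyapunov exponent in the coefficient -- produces a constant $c_2 > 0$ bounding $u^*(g)$ from below on $[0, y^*+L/4]$. The claimed uniform lower bound follows by taking the minimum of $c_1$ and $c_2$.
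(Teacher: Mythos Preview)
Your proposal is correct and follows essentially the same approach as the paper: monotonicity via the supersolution $M$ and the cocycle relation, invariance via passage to the limit in the cocycle identity, and the lower bound by comparing with Dirichlet windows for $x\ge y^*+L/4$ (via Lemma~\ref{unbounded-domain-lm2}) and with the Neumann--Dirichlet problem on $(0,l)$ for the remaining range. The only minor difference is in step~(c) for the region $[0,y^*+L/4]$: you propose re-running a hull-compactness argument \`a la Lemma~\ref{unbounded-domain-lm2}, whereas the paper simply invokes Proposition~\ref{fixed-boundary-prop1}, which already furnishes a continuous map $u^l:H(f)\to X^{++}(l)$, so that compactness of $H(f)$ immediately yields $\inf_{g\in H(f),\,0\le x\le y^*+L/4}u^l(g)(x)>0$ with no extra work.
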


\begin{proof}
First of all, by comparison principle for parabolic equations, we have $u(t,\cdot;u_0,g)\le u_0$ for any $t>0$ and $g\in H(f)$. Hence
$$u(t+s,\cdot;u_0,g\cdot(-t-s))=u(t,\cdot;u(s,\cdot;u_0,g\cdot(-t-s)),g\cdot(-t))\le u(t,\cdot;u_0,g\cdot(-t))$$
for any $t,s\ge 0$. Therefore, $u(t,\cdot;u_0,g\cdot(-t))$ decreases as $t$ increases.
Let
$$
u^*(g)(x)=\lim_{t\to \infty}u(t,x;u_0,g\cdot(-t))\quad \forall \, x\in [0,\infty).
$$

Next,  for any $g\in H(f)$ and $y\ge y^*$,
$$
u(t,x+y;u_0,g\cdot(-t))\ge u^*(t,x;g\cdot(-t),y,L)\quad \forall\,\, 0<x<L.
$$
It then follows that $\inf_{x> y^*+L/4,g\in H(f)}u^*(g)(x)>0$.
Choose $l>y^*+L/4$ and fix it.  By Proposition \ref{fixed-boundary-prop1},
$$
u(t,x;u_0,g\cdot (-t))\ge u^l(g)(x)\quad {\rm for}\quad 0\le x\le l.
$$
Note that
$$
\inf_{g\in H(f),0\le x\le y^*+L/4}u^l(g)(x)>0.
$$
It then follows that
$$
\inf_{x\ge 0,g\in H(f)} u^*(g)(x)>0.
$$

Now, note that
$$
u(s,x;u_0,g\cdot(-s))\to u^*(g)(x) \ as \ s\to\infty
$$
uniformly in bounded sets. This implies that
\begin{align*}
u(t,x;u^*(g),g)&=\lim_{s\to\infty} u(t,x;u(s,\cdot;u_0,g\cdot(-s)),g)\\
&=\lim_{s\to\infty}u(t+s,x;u_0,g\cdot(-s))\\
&=\lim_{s\to\infty} u(t+s,x;u_0,(g\cdot t)\cdot(-t-s))\\
&=u^*(g\cdot t)(x)
\end{align*}
uniformly in bounded sets. The lemma is thus proved.
\end{proof}

%\begin{lemma}
%\label{unbounded-domain-solu}
%Assume (H1) to (H4). Then $u^+(g)=u^*(g)$ for any $g\in H(f)$.
%\end{lemma}

%\begin{proof}
%Using the comparison principle for parabolic equations we know
%that $u^+(g_t)(x+y)$ is a supersolution of
%\eqref{unbounded-domain-eq1}. By Lemma \ref{unbounded-domain-lm2},
%it follows that $\inf_{x\in\RR^+,g\in H(f)}u^+(g)(x)>0$.
%Furthermore, from  Proposition \ref{limit-as-l-to-infty-prop}
%we obtain that $u^+(g\cdot t)$ is the solution of
%\eqref{unbounded-eq3}.

%Suppose that the conclusion is false. Then, there exists
%$T$ such that $u^+(g_T)\neq u^*(g_T)$. Without loss
%of generality, we may assume $T>0$.

%Let
%$$\sigma=\rho(u^+(g_T), u^*(g_T))$$
%By Lemma \ref{unbounded-domain-lm0},  there is
%$\delta>0$ such that
%$$\sigma\leq\rho(u^*(g_{-nT}),u(-(n+1)T,\cdot;
%u^+(g_T),g_T))-n\delta$$
%for any $n\in\NN$. Let $n\to\infty$, then it is a contradiction
%to $\sigma>0$. Hence $u^+(g)=u^*(g)$.
%\end{proof}

\begin{proof}[Proof of Proposition \ref{unbounded-prop1}]
(1) Let $u^*(g)$ be as in Lemma \ref{unbounded-domain-lm3} for $g\in H(f)$. We prove that $g\mapsto u^*(g)$ satisfies the conclusions in (1).

First, assume that $g_n\to g^*$ as $n\to\infty$. By regularity and
a priori estimates for parabolic equations, there is
$n_k\to \infty$ such that
$$
u^*(g_{n_k})(x)\to u^{**}(x)
$$
uniformly in bounded sets.
We prove that
$u^{**}(x)=u^*(g^*)(x)$. Suppose that $u^{**}(x)\not \equiv u^*(g^*)(x)$. Note that
$u(t,x;u^{**},g^*)$ and $u(t,x;u^*(g^*),g^*)$ exist globally (i.e. exist for all $t\in\RR$)
and
$$\inf u(t,x;u^{**},g^*)>0,\quad \inf u(t,x;u^*(g^*),g^*)>0.
$$
 Therefore,
 $$
 \sup_{t\in\RR} \rho(u(t,\cdot;u^{**},g^*),u(t,\cdot;u^*(g^*),g^*)<\infty
 $$
 and there is $\rho^*>0$ such that
$$
\rho(u^{**}(\cdot),u^*(g)(\cdot))=\rho^*.
$$
Then by Proposition \ref{part-metric-prop}(2),
for any $\tau>0$ there is $\delta>0$ such that
\begin{align*}
\rho^*\leq\rho(u(-n\tau,\cdot;u^{**},g^*),u(-n\tau,\cdot;u^*(g^*),g^*))-n\delta
\ for \ n\in\NN.
\end{align*}
Letting $n\to\infty$, we get a contradiction.
Hence $u^{**}(\cdot)=u^*(g^*)(\cdot)$ and
$u^*(g)(x)$ is continuous in $g$ in open compact topology with respect to $x$.

Next, by Lemma \ref{unbounded-domain-lm3},
 we have that, for any $g\in H(f)$, $u(t,\cdot;u^*(g),g)=u^*(g\cdot t)(\cdot)$.

We prove  now that $u^*(g\cdot t)(x)$ is almost periodic in $t$ uniformly in $x$ in bounded sets.
Note  that for any given $\{\alpha_n^{'}\}\subset\RR$ and $\{\beta_n^{'}\}\subset\RR$,
there are $\{\alpha_n\}\subset\{\alpha_n^{'}\}$ and $\{\beta_n\}\subset\{\beta_n^{'}\}$  such that
$\lim_{n\to\infty}\lim_{m\to\infty}g(t+\alpha_n+\beta_m,x,u)=\lim_{n\to\infty}g(t+\alpha_n+\beta_n,x,u)$ for $(t,x,u)\in\RR^3$.
Assume $\lim_{m\to\infty}g(t+\beta_m,x,u)=g^*(t,x,u)$ and $g^{**}(t,x,u)=\lim_{n\to\infty}g(t+\alpha_n+\beta_n,x,u)$.
It then follows that
$$
\lim_{m\to\infty} u(t+\beta_m,x;u^*(g),g)=u^*(g^*\cdot t)(x)
$$
uniformly in $x$ in bounded sets,
\begin{align*}
\lim_{n\to\infty}\lim_{m\to\infty} u(t+\alpha_n+\beta_m,x;u^*(g),g)&=\lim_{n\to\infty} u(\alpha_n,x;u^*(g^*\cdot t),g^*\cdot t)\\
&=\lim_{n\to\infty}u(t,x;u^*(g^*\cdot\alpha_n),g^*\cdot \alpha_n)\\
&=u^*(g^{**}\cdot  t)(x)
\end{align*}
uniformly in $x$ in bounded sets, and
$$
\lim_{n\to\infty} u(t+\alpha_n+\beta_n,x;u^*(g),g)=u^*(g^{**}\cdot t)(x)
$$
uniformly in $x$ in bounded set. Therefore
$\lim_{n\to\infty}\lim_{m\to\infty} u(t+\alpha_n+\beta_m,x;u^*(g),g)=\lim_{n\to\infty} u(t+\alpha_n+\beta_n,x;u^*(g),g)$.
By regularity and a priori estimates for parabolic equations,
$u(t,x;u^*(g),g)$ is uniformly continuous in $t\in\RR$ and $x\in\RR^+$.
Hence, $u^*(g\cdot t)(x)$ is almost periodic in $t$ uniformly in $x$ in bounded set.

(2) For any $u_0\in\tilde{X}^{++}$ and $g\in H(f)$.
By Proposition \ref{part-metric-prop}(2),
$\rho(u(t,\cdot;u_0,g),u^*(g\cdot t)(\cdot))$ decreases as $t$ increases.
It suffices to prove that
$\lim_{t\to\infty}\rho(u(t,\cdot;u_0,g),u^*(g\cdot t)(\cdot))=0$.
Suppose that this is not true. Then there are $t_n\to\infty$, $g^*\in H(f)$, $u^{**},\tilde u^{**}\in \tilde X^{++}$
with $u^{**}\neq\tilde u^{**}$
 such that $g\cdot t_n\to g^*$,
$u^*(g\cdot t_n)(x)\to u^{**}(x)$ and $u(t_n,\cdot;u_0,g)\to \tilde u^{**}(x)$ locally uniformly in $x\ge 0$.
Note that $u(t,\cdot;u^{**},g^*)$ and $u(t,\cdot;\tilde u^{**},g^*)$ exists for all $t\in\RR$,
$$
\sup_{t\in\RR}\rho(u(t,\cdot;u^{**},g^*),u(t,\cdot;\tilde u^{**},g^*))<\infty,
$$
and there is $\rho^*>0$ such that
$$
\rho(u^{**},\tilde u^{**})=\rho^*.
$$
By the arguments in (1) and Proposition \ref{part-metric-prop}(2), $u^{**}=\tilde u^{**}$, a contradiction. Therefore
$$\lim_{t\to\infty}\rho(u(t,\cdot;u_0,g),u^*(g\cdot t)(\cdot))=0$$ and then
$$
\lim_{t\to\infty} \|u(t,x;u_0,g)-u^*(g)(x)\|_{\tilde X}=0
$$
uniformly in $g\in H(f)$.

(3) Suppose that $\tilde u^{*}(t,x)$ is an entire positive solution of \eqref{unbounded-eq3}, and $\inf_{t\in\RR,x\in\RR^+}\tilde u^{*}(t,x)>0$.
Assume  $\tilde u^*(0,x)\not \equiv u^*(g)(x)$. By the arguments in (1) and Proposition \ref{part-metric-prop}(2), there is
$\delta_1>0$ such that
$$
\rho(\tilde u^*(0,\cdot),u^*(g)(\cdot))\le \rho(\tilde u^*(-n,\cdot),u^*(g\cdot(-n))(\cdot))-n\delta_1
$$
for $n\ge 1$. Letting $n\to\infty$, we get a contradiction. Therefore $\tilde u^*(0,x)\equiv u^*(g)(x)$ and then $\tilde u^*(t,x)\equiv
u^*(g\cdot t)(x)$.

(4) It follows from the fact that, if $f(t,x,u)\equiv f(t,u)$, then  for $u_0\equiv M$, $u(t,x;u_0,g)$ is independent of $x$.
\end{proof}

\section{Spreading-Vanishing Dichotomy in Diffusive KPP Equations with a Free Boundary and Proof of Theorem \ref{main-thm2}}

In this section, we study the spreading/vanishing scenario of \eqref{main-eq} and prove Theorem \ref{main-thm2}.
Throughout this section, we assume (H1)-(H5).

We first prove a lemma. For any given $h_0>0$ and  $u_0$ satisfying \eqref{initial-value},
recall that $(u(t,x;u_0,h_0)$, $h(t;u_0,h_0))$ is the solution of \eqref{main-eq},
and  $x=h(t;u_0,h_0)$ is increasing, and therefore
there exists $h_{\infty}\in(0,+\infty]$ such that
$\lim_{t\to+\infty}h(t;u_0,h_0)=h_{\infty}$.
 To stress the dependence of $h(t;u_0,h_0)$ on $\mu$, we now
write $h_{\mu}(t;u_0,h_0)$ instead of $h(t;u_0,h_0)$ and $h_\infty(\mu)$
instead of $h_\infty$ in the following. If no confusion occurs, we write $h_\mu(t;u_0,h_0)$ as $h_\mu(t)$.

\begin{lemma}
\label{monotone-increase}
For any $t\in(0,+\infty)$, $h_{\mu}(t)$ is a strictly
 increasing function of $\mu$.
\end{lemma}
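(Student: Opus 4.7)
The plan is to combine the comparison principle for free boundary problems (Proposition \ref{comparison-principle} in its lower-solution variant from the subsequent remark) with Hopf's lemma on the boundary of the moving domain. Fix $\mu_1<\mu_2$ and let $(u_i,h_i)=(u(\cdot,\cdot;u_0,h_0),h_{\mu_i}(\cdot))$ denote the solutions of \eqref{main-eq} corresponding to $\mu=\mu_i$.

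First I would establish weak monotonicity. Recall from the introduction that $u_{i,x}(t,h_i(t))<0$ for all $t>0$ (Hopf's lemma for the parabolic equation). Consequently
\[
h_1'(t)=-\mu_1 u_{1,x}(t,h_1(t))\le -\mu_2 u_{1,x}(t,h_1(t)),
\]
so $(u_1,h_1)$ is a lower solution of the free boundary problem with parameter $\mu_2$. Applying Proposition~\ref{comparison-principle} in its lower-solution form yields
\[
h_1(t)\le h_2(t)\quad\text{and}\quad u_1(t,x)\le u_2(t,x)\ \text{on}\ \{0\le x\le h_1(t)\}
\]
for every $t>0$.

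For strict monotonicity I would argue by contradiction: suppose $h_1(t_0)=h_2(t_0)$ for some $t_0>0$, and split into two cases. If $h_1\equiv h_2$ on some $[0,t_1]$ with $t_1>0$, then $u_1$ and $u_2$ solve the same initial-boundary-value problem on the same cylinder, so by uniqueness $u_1\equiv u_2$ there. But then $h_1'(t)=-\mu_1 u_{1,x}(t,h_1(t))<-\mu_2 u_{1,x}(t,h_1(t))=h_2'(t)$ for $0<t\le t_1$ (using Hopf's lemma strictly), contradicting $h_1\equiv h_2$. Otherwise, let $\bar t\in(0,t_0]$ be a time with $h_1(\bar t)=h_2(\bar t)$ that is approached by a sequence $t_n\uparrow\bar t$ with $h_1(t_n)<h_2(t_n)$. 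On $D=\{(t,x):0<t\le\bar t,\ 0\le x\le h_1(t)\}$ set $w=u_2-u_1\ge 0$. Linearizing $uf(t,x,u)$ by the mean value theorem, $w$ satisfies $w_t=w_{xx}+c(t,x)w$ with bounded $c$, together with $w_x(t,0)=0$ and $w(t,h_1(t))=u_2(t,h_1(t))>0$ for those $t$ with $h_1(t)<h_2(t)$. At the corner point $(\bar t,h_1(\bar t))=(\bar t,h_2(\bar t))$ we have $w=0$, and by Hopf's lemma $w_x(\bar t,h_1(\bar t))<0$, i.e.\ $u_{2,x}(\bar t,h_2(\bar t))<u_{1,x}(\bar t,h_1(\bar t))<0$. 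Multiplying by $-\mu_2$ and $-\mu_1$ gives
\[
h_2'(\bar t)=-\mu_2 u_{2,x}(\bar t,h_2(\bar t))>-\mu_1 u_{1,x}(\bar t,h_1(\bar t))=h_1'(\bar t).
\]
On the other hand $(h_2-h_1)(\bar t)=0$ with $h_2-h_1\ge 0$ nearby, forcing $(h_2-h_1)'(\bar t)\le 0$, which is the desired contradiction.

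The main obstacle is the degenerate ``Case (a)'' above: at $t=0$ both boundaries start with the same velocity $-\mu_i u_0'(h_0)$ and, since $u_0'(h_0)$ may vanish, the naive attempt to separate $h_1$ from $h_2$ immediately can fail. The resolution exploits uniqueness of the fixed-domain IBVP to upgrade ``$h_1=h_2$ on an interval'' to ``$u_1=u_2$ on that interval,'' after which the strict Hopf inequality $u_{1,x}(t,h_1(t))<0$ for $t>0$ breaks the equality through the distinct factors $\mu_1,\mu_2$. The second case is standard boundary-point lemma technique, with the only care needed in verifying that the linearized coefficient $c$ is bounded on $D$, which follows from the $C^1$ regularity of $f$ in (H1) and the boundedness of $u_1,u_2$.
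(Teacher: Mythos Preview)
Your proof is correct and follows essentially the same strategy as the paper's: apply the comparison principle (Proposition~\ref{comparison-principle}) for the weak inequality, then derive a contradiction via the strong maximum principle and Hopf's lemma at a first touching time. Your splitting into Cases~(a) and~(b) is in fact more careful than the paper, which simply asserts the existence of a first $t^*>0$ with $h_{\mu_1}(t)<h_{\mu_2}(t)$ on $(0,t^*)$ without discussing the possibility $u_0'(h_0)=0$; your Case~(a) cleanly disposes of that degenerate scenario via uniqueness of the fixed-domain problem.
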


\begin{proof}
Suppose $0<\mu_{1}<\mu_{2}$. Let $(u_{1},h_{\mu_{1}})$
and $(u_{2},h_{\mu_{2}})$ are the solutions of the following
free boundary problems
\begin{equation*}
\label{free-eq1}
\begin{cases}
u_t=u_{xx}+uf(t,x,u),\quad &t>0, 0<x<h_{\mu_{1}}(t)\cr
h_{\mu_{1}}^{'}(t)=-\mu_{1} u_x(t,h_{\mu_{1}}(t)),\quad &t>0 \cr
u_x(t,0)=u(t,h_{\mu_{1}}(t))=0,\quad &t>0 \cr
h_{\mu_{1}}(0)=h_{0},u(0,x)=u_{0}(x),\quad &0<x\leq h_{0}
\end{cases}
\end{equation*}
and
\begin{equation*}
\label{free-eq2}
\begin{cases}
u_t=u_{xx}+uf(t,x,u),\quad &t>0, 0<x<h_{\mu_{2}}(t)\cr
h_{\mu_{2}}^{'}(t)=-\mu_{2} u_x(t,h_{\mu_{2}}(t)),\quad &t>0 \cr
u_x(t,0)=u(t,h_{\mu_{2}}(t))=0,\quad &t>0 \cr
h_{\mu_{2}}(0)=h_{0},u(0,x)=u_{0}(x),\quad &0<x\leq h_{0}.
\end{cases}
\end{equation*}
Since $0<\mu_{1}<\mu_{2}$, then we have
$$h_{\mu_{1}}^{'}(t)=-\mu_{1} u_x(t,h_{\mu_{1}}(t))
<-\mu_{2} u_x(t,h_{\mu_{1}}(t)).$$
By Proposition \ref{comparison-principle},
we can obtain
$$h_{\mu_{1}}(t)\leq h_{\mu_{2}}(t)  \ \ for \
t\in[0,+\infty).$$

Now we prove $h_{\mu}(t)$ is strictly increasing.
If not, then we can find a first $t^{*}>0$ such
that $h_{\mu_{1}}(t)<h_{\mu_{2}}(t)$ for
$t\in(0,t^{*})$ and $h_{\mu_{1}}(t^{*})=
h_{\mu_{2}}(t^{*})$. It follows that
$$h'_{\mu_{1}}(t^{*})\geq h'_{\mu_{2}}(t^{*}).$$
Compare $u_{1}$ and $u_{2}$ over the region
$$\Omega_{t^{*}}:=\{(t,x)\in\RR: 0<t\leq t^{*},
0\leq x <h_{\mu_{1}}(t)\}.$$
The Strong maximum principle yields $u_{1}(t,x)<
u_{2}(t,x)$ in $\Omega_{t^{*}}$. Hence $w(t,x):=
u_{2}(t,x)-u_{1}(t,x)>0$ in $\Omega_{t^{*}}$ with
$w(t^{*},h_{\mu_{1}}(t^{*}))=0$. It follows that
$w_{x}(t^{*},h_{\mu_{1}}(t^{*}))< 0$, from which
we deduce, in view of
$(u_{1})_{x}(t^{*},h_{\mu_{1}}(t^{*}))<0$ and
$\mu_{1}<\mu_{2}$, that
$$-\mu_{1}(u_{1})_{x}(t^{*},h_{\mu_{1}}(t^{*}))<
-\mu_{2}(u_{2})_{x}(t^{*},h_{\mu_{2}}(t^{*})).
$$
Thus $h'_{\mu_{1}}(t^{*})<h'_{\mu_{2}}(t^{*})$.
But this is a contradiction, which proved our conclusion
that $h_{\mu_{1}}(t)<h_{\mu_{2}}(t)$ for all $t>0$.
\end{proof}

\begin{remark}
If we consider \eqref{main-doub-eq}, for any $t\in(0,+\infty)$,
by Proposition \ref{comparison-principle1} and using the same
argument as Lemma \ref{monotone-increase} we have $g_\mu(t)$
is a strictly monotone decreasing function of $\mu$.
\end{remark}

We now prove Theorem \ref{main-thm2}.

\begin{proof}[Proof of Theorem \ref{main-thm2}]

(1)(i) Suppose that $h_\infty<\infty$. First of all, we claim that
 $h^{'}(t;u_0,h_0)\to 0$ as $t\to\infty$. Assume that the claim is not true. Then there is $t_n\to \infty$ ($t_n\ge 1$) such that
 $\lim_{n\to\infty}h^{'}(t_n;u_0,h_0)>0$. Let $h_n(t)=h(t+t_n;u_0,h_0)$ for $t\ge -1$.
  Note that $h_n(t)\to h_\infty$ as $n\to\infty$ uniformly in $t\ge -1$.
  By \cite[Theorem 2.1]{DuLi},  $\{h_n^{'}(t)\}$ is uniformly bounded and equicontinuous on $[-1,\infty)$.  We then may assume that
  there is a continuous function $h^*(t)$ such that $h_n^{'}(t)\to h^*(t)$ as $n\to\infty$ uniformly in $t$ in bounded sets of $[-1,\infty)$.
  It then follows that $h^*(t)=\frac{d h_\infty}{dt}\equiv 0$ and then $\lim_{n\to\infty}h^{'}(t_n;u_0,h_0)=0$, which is a contradiction.
  Hence the claim holds.

 By regularity and a priori estimates for parabolic equations,
 for any sequence $t_n\to\infty$, there are $t_{n_k}\to\infty$ and
$u^*\in C^1(\RR\times [0,h_\infty])$ and $g^*\in H(f)$ such that
$$
f\cdot t_{n_k}\to g^*
$$
and
$$
\|u(t+t_{n_k},\cdot;u_0,h_0)-u^*(t,\cdot)\|_{C^1([0,h(t+t_{n_k})])}\to 0
$$
as $t_{n_k}\to\infty$. Moreover, we have that $u^*(t,x)$ is an entire solution
of
\begin{equation}
\label{limit-eq1}
\begin{cases}
u_t=u_{xx}+u g^*(t,x,u),\quad 0<x<h_\infty\cr
u_x(t,0)=u(t,h_\infty)=0.
\end{cases}
\end{equation}

Next, we show that $h_{\infty}<\infty$
implies $h_{\infty}\leq l^*$.
Assume that $h_{\infty}\in(l^*,\infty)$. Then
 there exists $\tilde{T}>0$ such that $h(t)>h_{\infty}-\epsilon>l^*$
for all $t\geq\tilde{T}$ and some small $\epsilon>0$.
Consider
\begin{equation}
\label{limit-eq2}
\begin{cases}
v_t=v_{xx}+vf(t,x,v),\quad  0< x< h_{\infty}-\epsilon \cr
v_{x}(t,0)=v(t,h_{\infty}-\epsilon)=0.
\end{cases}
\end{equation}
By comparison principle for parabolic equations,
$$
u(t+\tilde T,\cdot;u_0,h_0)\ge v(t+\tilde{T},\cdot;u(\tilde T,\cdot;u_0,h_0),\tilde T)\quad {\rm for}\quad t\ge 0,
$$
where $v(t+\tilde{T},\cdot;u(\tilde T,\cdot;u_0,h_0),\tilde T)$ is the solution of \eqref{limit-eq2}
with $u(\tilde T,\cdot;u(\tilde T,\cdot;u_0,h_0),\tilde T)=u(\tilde T,\cdot;u_0,h_0)$.
By Proposition \ref{fixed-boundary-prop1}, \eqref{limit-eq2} has a unique time almost periodic positive solution $v_{h_{\infty}-\epsilon}(t,x)$.
Moreover, for any $v_0\ge 0$ and $v_0\not\equiv 0$,
\begin{equation}
\label{limit-eq3}
\|v(t+\tilde T,\cdot;v_0,\tilde T)-v_{h_{\infty-\epsilon}}(t+\tilde T,\cdot)\|\to 0
\end{equation}
as $t\to\infty$.  By \eqref{limit-eq3} and
comparison principle for parabolic equations,
$$
u^*(t,x)>0\quad \forall\,\, t\in\RR,\,\, x\in (0,h_\infty).
$$
It then follows that $u^*_x(t,h_\infty)<0$. This implies that
$$
\limsup_{t\to\infty}u_x(t,h(t);u_0,h_0)<0
$$
and then
$$
\liminf h^{'}(t)=\liminf_{t\to\infty} -\mu u_x(t,h(t);u_0,h_0)>0,
$$
which is a contradiction. Therefore $h_\infty\le l^*$.

We  now  show that $\lim_{t\to\infty}\|u(t,\cdot;u_0,h_0)\|
_{C([0,h(t)])}=0$. Let $\bar{u}(t,x)$
denote the solution of the problem
\begin{equation*}
\label{fixed-boundary-eq14}
\begin{cases}
\bar{u}_t=\bar{u}_{xx}+\bar{u}f(t,x,\bar{u}),\quad &t>0, 0<x<h_{\infty} \cr
\bar{u}_{x}(t,0)=\bar{u}(t,h_{\infty})=0,\quad &t>0 \cr
\bar{u}(0,x)=\tilde{u}_{0}(x), \quad &0\leq x\leq h_{\infty}
\end{cases}
\end{equation*}
where
$$\tilde{u}_{0}(x)=
\begin{cases}
u_{0}(x) & \ for \ 0\leq x\leq h_{0} \\
0 & \ for \ x> h_{0}
\end{cases}
$$
The comparison principle implies that
$$0\leq u(t,x;u_0,h_0)\leq\bar{u}(t,x) \  \ for \ t>0 \ and \
x\in[0,h(t)]$$

If $h_{\infty}<l^*$, then  $\lambda(a,h_{\infty})<0$
and  by Proposition \ref{fixed-boundary-prop1},
 $\bar{u}\to0$ uniformly for $x\in[0,h_{\infty}]$ as
$t\to\infty$. Hence, $\lim_{t\to\infty}\|u(t,\cdot;u_0,h_0)\|
_{C([0,h(t)])}=0$.

If $h_\infty=l^*$, assume that  $\lim_{t\to\infty}\|u(t,\cdot;u_0,h_0)\|
_{C([0,h(t)])}\not =0$. Then there are $t_n\to\infty$ and $u^*\not =0$, $g^*\in H(f)$ such that
$\|u(t_n,\cdot;u_0,h_0)-u^*(\cdot)\|_{C([0,h(t_n)])}\to0$ and $f\cdot t_n\to g^*$ as $t_n\to\infty$. We have $u(t,\cdot;u^*,g^*)$ is an entire solution of
$$
\begin{cases}
u_t=u_{xx}+ug^*(t,x,u),\quad 0<x<l^*\cr
u_x(t,0)=u(t,l^*)=0.
\end{cases}
$$
By Hopf lemma  for parabolic equations, we have  $u_x(t,l^*;u^*,g^*)<0$. This implies that
$$
\lim_{n\to\infty} h^{'}(t_n)=-\lim_{n\to\infty}\mu u_x(t_n,h(t_n);u_0,h_0)>0,
$$
which is a contradiction again.

(1)(ii)
First note that for any fixed $x$, $u^l(g)(x)$ is increasing in $l$ and $u^l(g)(x)\le u^*(g)(x)$.
Then there is $\tilde u^*(g)(x)$ such that
$$
\lim_{l\to\infty}u^l(g)(x)=\tilde u^*(g)(x)\le u^*(g)(x)
$$
locally uniformly in $x$.

We claim that
$$
\tilde u^*(g)(x)\equiv u^*(g)(x).
$$
In fact, by Lemma \ref{unbounded-domain-lm2},
$$
\inf_{x\ge 0,g\in H(f)}\tilde u^*(g)(x)>0.
$$
Note that $u(t,x;\tilde u^*(g),g)=\tilde u^*(g\cdot t)(x)$. Then by Proposition \ref{unbounded-prop1},
  $u^*(g)(x)\equiv \tilde u^*(g)(x)$.

Note that for any $T>0$ satisfying $h(T)>l^*$,
$$
u(t+T,x;u_0,h_0)\ge u^l(t,x;u(T,\cdot;u_0,h_0),f\cdot T)\quad \forall\,\, t\ge 0,
$$
where $u^l(t,x;u(T,\cdot;u_0,h_0),f\cdot T)$ is the solution of \eqref{fixed-boundary-eq3} with
$g=f\cdot T$, $l=h(T;u_0,h_0)$, and $u^l(0,x;u(T,\cdot;u_0,h_0),f\cdot T)=u(T,x;u_0,h_0)$.
Note also that
$$
u^l(t,x;u(T,\cdot;u_0,h_0),f\cdot T)-u^l(f\cdot (t+T))(x)\to 0
$$
as $t\to\infty$ uniformly in $x\in [0,l]$ and
$$
u^l(f\cdot (t+T))(x)-u^*(f\cdot(t+T))(x)\to 0
$$
as $l\to\infty$ locally uniformly in $x\in [0,\infty)$. It then follows that
$$
u(t,x;u_0,h_0)-u^*(f\cdot t)(x)\to 0
$$
as $t\to\infty$ locally uniformly in $x\in [0,\infty)$.

(2) If $h_0\ge l^*$, then $h_\infty> h_0\ge l^*$. (2) then follows from (1).

(3) Assume that $h_{0}<l^*$.
Let
$$
\mu^{*}=\sup\{\mu\,|\, h_\infty(\mu)<\infty\}.
$$

We claim that $\mu^{*}\in\{\mu|h_\infty(\mu)<\infty\}$
when $\{\mu\,|\,h_\infty(\mu)<\infty\}\not =\emptyset$. Otherwise
$h_\infty(\mu^{*})=\infty$. It means that we can find $T>0$
such that $h_{\mu^{*}}(T)>l^*$. By the continuous dependence of
$h_\mu$ on $\mu$, there is $\epsilon>0$ small such that
$h_\mu(T)>l^*$ for all $\mu\in[\mu^{*}-\epsilon,\mu^{*}+\epsilon]$.
Hence, for all such $\mu$ we have
$$h_\infty(\mu)=\lim_{t\to\infty}h_\mu(t)>h_\mu(T)>l^*$$
Thus, $h_\infty(\mu)=\infty$. This implies that
$[\mu^{*}-\epsilon,\mu^{*}+\epsilon]\cap\{\mu|h_\infty(\mu)<\infty\}=\emptyset$,
and it is a contradiction to the definition of $\mu^{*}$. So we proved the claim
that $\mu^{*}\in\{\mu|h_\infty(\mu)<\infty\}$.

For $\mu>\mu^{*}$, we get $h_\infty(\mu)=\infty$. If not,
it must have $\mu\leq\mu^{*}$, and it is a contradiction.
Then spreading happens.

For $\mu\leq\mu^{*}$, by the Lemma \ref{monotone-increase}
we can obtain
$$h_\mu(t)\leq h_{\mu^{*}}(t) \ for \ all \ t\in(0,+\infty)$$
It follow that $h_\infty(\mu)\leq h_\infty(\mu^{*})<\infty$,
and vanishing happens.
\end{proof}

\section{Remarks}
We have examined the dynamical behavior of the population $u(t,x)$
with spreading front $x=h(t)$ determined by \eqref{main-eq}, and
proved that for this problem a spreading-vanishing dichotomy holds
(see Theorem \ref{main-thm2}). In this section, we discuss how  the techniques for \eqref{main-eq} can be modified to study the
 double fronts free boundary \eqref{main-doub-eq}.

First, note that the existence and uniqueness results for solutions of \eqref{main-eq} with given initial datum $(u_0,h_0)$
 can be
extended to \eqref{main-doub-eq} using the same arguments as in Section 5
\cite{DuLi}, except that we need to modify the transformation in the
proof of Theorem 2.1 in \cite{DuLi} such that both boundaries are
straightened. In particular, for given
$g_0<h_0$ and $u_0$ satisfying \eqref{initial-value-1},
the system \eqref{main-doub-eq} has a unique global solution $(u(t,x;u_0,h_0,g_0),h(t;u_0,h_0,g_0),g(t;u_0,h_0,g_0))$ with
$u(0,x;u_0,h_0,g_0)=u_0(x)$, $h(0;u_0,h_0,g_0)=h_0$, $g(0;u_0,h_0,g_0)=g_0$.
Moreover, $g(t)$ decreases and $h(t)$ increases as $t$ increases. Let
$g_\infty=\lim_{t\to\infty}g(t;u_0,h_0,g_0)$ and $h_\infty=\lim_{t\to\infty}h(t;u_0,h_0,g_0)$.

We next consider the spreading-vanishing dichotomy for \eqref{main-doub-eq}. To this end,
We assume $(H4)^*$ instead of $(H4)$,

\medskip
\noindent {\bf $(H4)^*$} {\it There is $L^*\ge 0$ such that
 $\inf_{y\in\RR, \, l\ge L^*} \tilde\lambda(a(\cdot,\cdot+y),l)>0$.}
\medskip

\noindent  Consider the following  fixed boundary problem on $\RR^1$,
\begin{equation}
\label{glob-eq}
u_t=u_{xx}+uf(t,x,u) \quad x\in(-\infty,\infty).
\end{equation}
For given $u_0\in C^b_{\rm unif}(\RR,\RR^+)$, let $u(t,x;u_0)$ be the solution of
\eqref{glob-eq} with $u(0,x;u_0)=u_0(x)$.
By the similar arguments as those in Theorem \ref{main-thm1}, we can prove

\begin{proposition}
\label{positive-almost-periodic-solution-doub-prop}
Assume (H1), (H2), $ (H4)^*$, and (H5).
 \eqref{glob-eq} has a unique time almost periodic positive solution
$u^*(t,x)$ and for any $u_0\in C^b_{\rm unif}(\RR,\RR^+)$ with
$\inf_{x\in(-\infty,\infty)}u_0(x)>0$,
$\lim_{t\to\infty}\|u(t,\cdot;u_0)-u^*(t,\cdot)\|_{C(\RR)}=0$.
\end{proposition}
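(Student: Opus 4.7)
The plan is to adapt the proof of Proposition \ref{unbounded-prop1} (the analogous statement on the half-line) to the full line $\RR$, with the key change being that the stronger assumption $(H4)^*$ supplies a uniform positive lower bound on the Dirichlet principal Lyapunov exponent over \emph{all} translates in $x$, which is needed because the domain is now unbounded on both sides. The skeleton is: (i) obtain positive almost periodic Dirichlet solutions on sliding intervals of fixed length with uniform positive lower bounds, (ii) construct $u^*(g)$ as the pointwise monotone limit of a Cauchy problem starting from a large constant, (iii) prove time almost periodicity via (H5), and (iv) obtain stability and uniqueness from the part metric.

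For Step (i), fix $L\ge L^*$ and for each $y\in\RR$ and $g\in H(f)$ consider the Dirichlet problem
\begin{equation*}
\begin{cases}
u_t=u_{xx}+u g(t,x,u),\quad y-L/2<x<y+L/2,\\
u(t,y-L/2)=u(t,y+L/2)=0.
\end{cases}
\end{equation*}
By $(H4)^*$, $\tilde\lambda(g(\cdot,\cdot,0),y,L)>0$, so the Dirichlet analog of Proposition \ref{fixed-boundary-prop1} yields a unique positive time almost periodic solution $u^*(t,x;g,y,L)$. Mimicking Lemma \ref{unbounded-domain-lm2}, a contradiction argument using (H5) to extract a uniform limit of $g_n(t,\cdot+y_n,u)$, combined with the continuity of $\tilde\lambda$ in $a$ and the uniform positivity supplied by $(H4)^*$, gives
\begin{equation*}
\kappa := \inf_{y\in\RR,\;|x-y|\le L/4,\;g\in H(f)} u^*(0,x;g,y,L)>0.
\end{equation*}

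For Step (ii), take $u_0\equiv M$ with $M$ large enough that $f(t,x,M)<0$ everywhere, so that $u(t,\cdot;u_0,g)\le M$ for all $t\ge 0$. The same monotonicity argument as in Lemma \ref{unbounded-domain-lm3} shows that $t\mapsto u(t,\cdot;u_0,g\cdot(-t))$ is pointwise non-increasing, hence the pointwise limit $u^*(g)(x):=\lim_{t\to\infty}u(t,x;u_0,g\cdot(-t))$ exists. Covering $\RR$ by overlapping intervals $(y-L/2,y+L/2)$ with $y$ ranging over $(L/2)\ZZ$ and applying comparison against the Dirichlet solutions of Step (i) (which are subsolutions of the Cauchy problem) shows $u^*(g)(x)\ge\kappa$ for every $x\in\RR$ and $g\in H(f)$, while $u^*(g)(x)\le M$ is automatic. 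The backward-orbit identity $u(t,\cdot;u^*(g),g)=u^*(g\cdot t)$ is established exactly as in Lemma \ref{unbounded-domain-lm3}, so $u^*(g\cdot t)(x)$ is an entire positive solution of \eqref{glob-eq} bounded uniformly away from $0$ and $\infty$.

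The almost periodicity of $u^*(g\cdot t)(x)$ in $t$ uniformly in $x$ on bounded sets follows from (H5) via the iterated-limit characterization of almost periodicity, exactly as in part (1) of the proof of Proposition \ref{unbounded-prop1}. For stability and uniqueness, the part-metric machinery of Proposition \ref{part-metric-prop}(2) transfers verbatim to the full line (its proof uses only pointwise comparison with constants and a Hopf-type gain, neither of which relies on the presence of the Neumann boundary at $x=0$); combining the strict contraction with a standard $\omega$-limit argument yields $\|u(t,\cdot;u_0)-u^*(f\cdot t)(\cdot)\|_{C(\RR)}\to 0$ whenever $\inf_{\RR}u_0>0$, and the same contraction applied along a backward sequence forces any other bounded-below entire positive solution to coincide with $u^*(g\cdot t)$. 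The main obstacle is Step (i): one must extract a limit from translates $g_n(t,\cdot+y_n,u)$ with $y_n$ possibly unbounded on \emph{both} sides, which is precisely what (H5) provides and why $(H4)^*$ is stated with $y$ ranging over all of $\RR$ rather than just $y\ge y^*$.
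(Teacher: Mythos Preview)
Your proposal is correct and follows precisely the route the paper intends: the paper itself does not give a detailed proof of this proposition, stating only that it follows ``by the similar arguments as those in Theorem \ref{main-thm1}'', and you have accurately reconstructed those arguments, correctly identifying that on the full line the role of (H3) (which in the half-line case handled the region near the Neumann boundary) is absorbed into $(H4)^*$, whose uniformity over all $y\in\RR$ lets the sliding Dirichlet intervals cover everything.
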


We now have the following spreading-vanishing dichotomy for \eqref{main-doub-eq}.

\begin{proposition}
\label{spreading-vanishing-doub-prop}
Assume (H1), (H2), $(H4)^*$, and (H5). For given $h_0>0$ and  $u_0$ satisfying \eqref{initial-value-1},
the following hold.
\begin{itemize}
\item[(1)]
Either

(i) $h_\infty-g_\infty\le L^*$ and
$\lim_{t\to+\infty} u(t,x;u_0,h_0,g_0)=0$ uniformly in $x$

or

(ii)  $h_{\infty}=-g_{\infty}=\infty$ and
$\lim_{t\to\infty}[u(t,x;u_0,h_0,g_0)-u^*(t,x)]=0$
locally uniformly for $x\in(-\infty,\infty)$, where $u^*(t,x)$ is the
unique time almost periodic positive solution of \eqref{glob-eq}.

\item[(2)] If $h_0-g_0\ge L^*$, then $h_\infty=-g_{\infty}=\infty$.

\item[(3)]
Suppose $h_0-g_0<L^*$. Then there exists $\mu^{*}>0$
such that spreading occurs if $\mu>\mu^{*}$ and vanishing occurs if $\mu\le \mu^{*}$.
\end{itemize}
\end{proposition}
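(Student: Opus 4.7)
The plan is to follow the same three-step scheme used for Theorem \ref{main-thm2}, with the extra technical ingredient needed to handle two moving fronts simultaneously. Throughout, the main tools are the comparison principle for the double-front problem (Proposition \ref{comparison-principle1}), the monotonicity of $g_\mu(t)$ and $h_\mu(t)$ in $\mu$ (which for $g_\mu$ is the analog of Lemma \ref{monotone-increase} stated in the remark), the existence of a unique time almost periodic positive solution $u^*(t,x)$ of \eqref{glob-eq} on $\RR$ given by Proposition \ref{positive-almost-periodic-solution-doub-prop}, and the uniform positivity of principal Lyapunov exponents on long intervals provided by $(H4)^*$.

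For part (1), I first establish that whenever $h_\infty-g_\infty<\infty$, both $h'(t)\to 0$ and $-g'(t)\to 0$ as $t\to\infty$. This follows from the Schauder-type estimates of \cite{DuLi} exactly as in the proof of Theorem \ref{main-thm2}(1)(i): $h_n(t):=h(t+t_n)$ and $g_n(t):=g(t+t_n)$ are monotone, uniformly bounded, and their derivatives form an equicontinuous family, so any limit derivative must be the derivative of a constant limit. Next I show $h_\infty-g_\infty\le L^*$ by contradiction: if $h_\infty-g_\infty>L^*$, fix a small $\epsilon>0$ and a translate $[a,b]\subset(g_\infty+\epsilon,h_\infty-\epsilon)$ of length $>L^*$; $(H4)^*$ gives $\tilde\lambda(f(\cdot,\cdot,0),b-a)>0$, so the almost periodic Dirichlet problem on $[a,b]$ has a positive almost periodic solution $v^*(t,x)$ bounded away from $0$ on compact subsets. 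Comparison on $[a,b]$ (valid for all large $t$ since $g(t)<a$ and $h(t)>b$) and the convergence property of Proposition \ref{fixed-boundary-prop1} then force $\liminf_t u(t,x;u_0,h_0,g_0)>0$ uniformly on a neighborhood of $h_\infty$ (and of $g_\infty$). By Hopf's lemma applied to an entire limit solution of the problem on $[g_\infty,h_\infty]$, this yields $\liminf_t(-u_x(t,h(t)))>0$, contradicting $h'(t)\to 0$. Once $h_\infty-g_\infty\le L^*<\infty$, the argument from Theorem \ref{main-thm2}(1)(i) carries over verbatim to give $u(t,\cdot;u_0,h_0,g_0)\to 0$ uniformly: majorize by the solution of the fixed-boundary problem on $[g_\infty,h_\infty]$ whose principal Lyapunov exponent is nonpositive.

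The genuinely new obstacle is showing that if $h_\infty-g_\infty=\infty$ then in fact $h_\infty=\infty$ \emph{and} $-g_\infty=\infty$. Suppose instead that $g_\infty>-\infty$ while $h_\infty=\infty$. Then $g'(t)\to 0$ by the same Schauder-equicontinuity argument as above (now only for the left boundary). Pick $\epsilon>0$ small and $L>L^*$; since $g(t)\downarrow g_\infty$ and $h(t)\uparrow\infty$, for all large $t$ the fixed interval $I:=[g_\infty+\epsilon,g_\infty+\epsilon+L]$ lies inside $(g(t),h(t))$. By $(H4)^*$ the Dirichlet principal Lyapunov exponent on $I$ is positive (uniformly in shifts), so comparison with the almost periodic positive solution on $I$ gives $u(t,g_\infty+\epsilon/2+\eta,\cdot)\ge \delta>0$ for small $\eta$ and all large $t$. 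A standard barrier argument at the moving boundary $x=g(t)$ (using that $u(t,g(t))=0$, $u\ge\delta$ a fixed distance inside, and uniform parabolic regularity) yields $|u_x(t,g(t))|\ge \delta'>0$, hence $|g'(t)|\ge\mu\delta'$, contradicting $g'(t)\to 0$. Symmetrically, $h_\infty<\infty$ with $g_\infty=-\infty$ is ruled out. Finally, in the spreading case, local convergence to $u^*$ follows by sandwiching $u(t,x;u_0,h_0,g_0)$ between the solutions of the Dirichlet problem on $[g(T),h(T)]$ (with $T$ large) and $u^*$, then letting $T\to\infty$ and using the stability statement of Proposition \ref{positive-almost-periodic-solution-doub-prop}, exactly as in Theorem \ref{main-thm2}(1)(ii).

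For parts (2) and (3) I mimic the single-front proof. For (2), if $h_0-g_0\ge L^*$ then $(H4)^*$ gives $\tilde\lambda(f(\cdot,\cdot,0),h_0-g_0)>0$, so any vanishing scenario is excluded by the Dirichlet comparison argument above; hence $h_\infty-g_\infty=\infty$ and by the previous paragraph $h_\infty=-g_\infty=\infty$. For (3), the continuous dependence of $(u_\mu,g_\mu,h_\mu)$ on $\mu$, together with the monotonicity $\mu_1<\mu_2\Rightarrow g_{\mu_1}(t)>g_{\mu_2}(t)$, $h_{\mu_1}(t)<h_{\mu_2}(t)$, shows that the set $\{\mu>0:\text{vanishing occurs}\}$ is a (possibly empty) interval of the form $(0,\mu^*]$: I define $\mu^*=\sup\{\mu:h_\infty(\mu)-g_\infty(\mu)<\infty\}$, verify by continuous dependence that the supremum is attained (using that if $h_{\mu^*}(T)-g_{\mu^*}(T)>L^*$ for some $T$, then the same holds for nearby $\mu$, forcing spreading on a neighborhood of $\mu^*$ and contradicting the definition), and conclude vanishing for $\mu\le\mu^*$, spreading for $\mu>\mu^*$. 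The single nontrivial step of this proposal is the \emph{both-fronts-escape} dichotomy in the third paragraph; everything else is a routine adaptation of the single-front arguments.
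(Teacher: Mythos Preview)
Your proposal is correct and follows essentially the same scheme as the paper's proof: the same case split on $h_\infty-g_\infty$, the same contradiction via $(H4)^*$ and a Hopf-type argument at a stuck boundary, and the same deferral to the arguments of Theorem \ref{main-thm2} for the routine parts. The only cosmetic difference is that for the both-fronts-escape step the paper passes to a subsequential entire limit solution on $(g_\infty,\infty)$ and applies Hopf's lemma at $x=g_\infty$, whereas you sketch a direct barrier; note that your lower bound from comparison on $I=[g_\infty+\epsilon,g_\infty+\epsilon+L]$ must be taken at a point \emph{inside} $I$ (e.g.\ $g_\infty+\epsilon+L/4$), not at $g_\infty+\epsilon/2$, which lies to the left of $I$.
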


\begin{proof}
(1) Observe that we have either $h_\infty-g_\infty<\infty$ or $h_\infty-g_\infty=\infty$.

Suppose that $h_\infty-g_\infty<\infty$. By $(H4)^*$ and the similar arguments as those in Theorem \ref{main-thm2}(1)(i),
we must have $h_\infty-g_\infty\le L^*$ and $u(t,x;u_0,h_0,g_0)\to 0$ as $t\to\infty$.

Suppose that $h_\infty-g_\infty=\infty$. We first claim that $h_\infty=-g_\infty=\infty$.
In fact, if the claim does not hold, without loss of generality, we may assume that $-\infty<g_\infty<h_\infty=\infty$.
By the similar arguments as those in Theorem \ref{main-thm2}(1)(i), we have $g^{'}(t)\to 0$ as $t\to\infty$.
Let $T^*>0$ be such that $h(T^*)-g(T^*)>L^*$. Then by $(H4)^*$,
$$
\inf_{t>T^*,x\in [g(T^*),h(T^*)]}u(t,x;u_0,h_0,g_0)>0.
$$
Let $t_n\to\infty$ be such that $f(t+t_n,x,u)\to g^*(t,x,u)$ and $u(t+t_n,x;u_0,h_0,g_0)\to u^*(t,x)$.
Then $u^*(t,x)$ is the solution of
$$
\begin{cases}
u_t=u_{xx}+u g^*(t,x,u),\quad g_\infty<x<\infty\cr
u(t,g_\infty)=0,
\end{cases}
$$
and
$\inf_{t\in\RR,x\in [g(T^*),h(T^*)]}u^*(t,x)>0$. Then by Hopf Lemma for parabolic equations,
$$
u^*_x(t,g_\infty)>0.
$$
This implies that
$$g^{'}(t+t_n;u_0,h_0,g_0)\to -\mu u^*_x(t,g_\infty)<0,
$$
which contradicts to the fact that $g^{'}(t;u_0,h_0,g_0)\to 0$ as $t\to\infty$. Hence $(g_\infty,h_\infty)=(-\infty,\infty)$.
By the similar arguments as those in Theorem \ref{main-thm2}(1)(ii), we have
$$
\lim_{t\to\infty}[u(t,x;u_0,h_0,g_0)-u^*(t,x)]=0
$$
locally uniformly in $x$ in bounded sets.

(2) and (3) follows from the similar arguments as those in Theorem \ref{main-thm2} (2) and (3), respectively.
\end{proof}

\section*{Acknowledgements}

Fang Li would like to thank the China Scholarship Council for
financial support during the two years of her overseas study and to
express her gratitude to the  Department of Mathematics and
Statistics, Auburn University  for its kind hospitality.

\end{document}